\numberwithin{figure}{section}
\numberwithin{equation}{section}
\newtheorem{theorem}{Theorem}[section]
\newtheorem{lemma}[theorem]{Lemma}
\newtheorem{remark}[theorem]{Remark}
\newtheorem{conjecture}[theorem]{Conjecture}
\pgfplotsset{compat=newest}
\newlength\figureheight
\newlength\figurewidth
\pgfplotsset{
tick label style={font=\scriptsize},
label style={font=\footnotesize},
legend style={font=\footnotesize},
every axis plot/.append style={very thick}
}
\newcommand{\dd}{{\rm d}}
\newcommand{\dy}{{\rm d}y}
\newcommand{\dx}{{\rm d}x}
\newcommand{\ee}{\mathbb{E}}
\newcommand{\cM}{{\mathcal{M}}}
\newcommand{\cW}{{\mathcal{W}}}
\newcommand{{\paa}[1]}{p_{00,#1}}
\newcommand{{\pab}[1]}{p_{01,#1}}
\newcommand{{\pba}[1]}{p_{10,#1}}
\newcommand{{\pbb}[1]}{p_{11,#1}}
\newcommand{\eee}{{\rm e}}
\renewcommand{\fnum@figure}[1]{\textbf{\figurename~\thefigure}. }
\renewcommand{\fnum@table}[1]{\textbf{\tablename~\thetable}. }
\begin{document}

%%%%%%%%%%% TITLE PAGE %%%%%%%%%%%%%%%%%%%%

\title{Opinion dynamics on dense dynamic random graphs}
  
\author{S.\ Baldassarri$^1$, P.\ Braunsteins$^2$, F.\ den Hollander$^1$, M.\ Mandjes$^1$}

\date{}

\maketitle

\maketitle                              

\begin{abstract}
We consider two-opinion voter models on dense dynamic random graphs. Our goal is to understand and describe the occurrence of \emph{consensus} versus \emph{polarisation} over long periods of time. The former means that all vertices have the same opinion, the latter means that the vertices split into two communities with different opinions and few disagreeing edges. We consider three models for the joint dynamics of opinions and graphs: one with a one-way feedback and two which are \emph{co-evolutionary}, i.e., with a two-way feedback. In the first model only \emph{coexistence} is attainable, meaning that both opinions survive, but with the presence of \emph{many} disagreeing edges. In the second model only consensus prevails, while in the third model polarisation is possible. Our main results are functional laws of large numbers for the densities of the two opinions, functional laws of large numbers for the dynamic random graphs in the space of graphons, and a characterisation of the limiting densities in terms of Beta-distributions. Our results are supported by simulations. To prove our results we develop a novel method that involves coupling the co-evolutionary process to a \emph{mimicking process} with one-way feedback. We expect that this method can be extended to other dense co-evolutionary models. 

\vskip 0.5truecm
\noindent
{\it MSC} 2020 {\it subject classifications.} 
60F10, %Large deviations
60F17, %Functional limit theorems; invariance principles
60K35, %Interacting random processes; statistical mechanics type models; percolation theory 
60K37. %Processes in random environment
\\
{\it Key words and phrases.} Opinion dynamics, graph dynamics, co-evolution, consensus, polarisation.\\
{\it Acknowledgment.} The work in this paper was supported by the European Union’s Horizon 2020 research and innovation programme under the Marie Skłodowska-Curie grant agreement no.\ 101034253, and by the NWO Gravitation project NETWORKS under grant no.\ 024.002.003.\\ 

\includegraphics[height=3em]{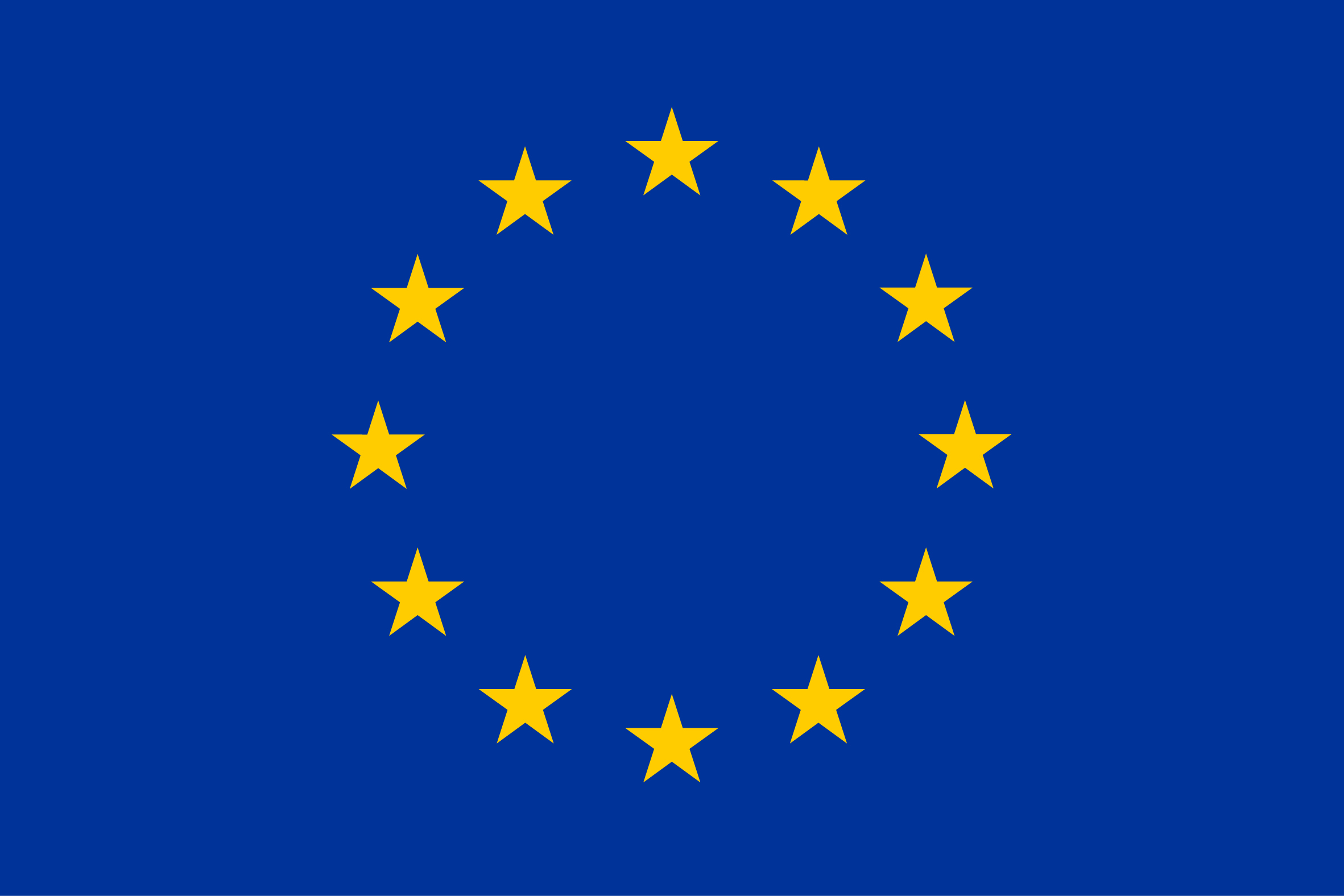} 
\end{abstract}

\bigskip

\footnoterule
\noindent
\hspace*{0.3cm} {\footnotesize $^{1}$ 
Mathematisch Instituut, Universiteit Leiden, Einsteinweg 55 1, 2333 CC  Leiden, The Netherlands,\\
{\tt \{s.baldassarri,denholla,m.r.h.mandjes\}@math.leidenuniv.nl}}\\
\hspace*{0.3cm} {\footnotesize $^{2}$
School of Mathematics and Statistics, Anita B.\ Lawrence Centre, UNSW Sydney, Sydney NSW 2052, Australia,
{\tt p.braunsteins@unsw.edu.au}}

\newpage

\footnotesize
\tableofcontents
\normalsize

\newpage

%%%%%%%%%%%%% SECTION 1 %%%%%%%%%%%%%%%%%

\section{Introduction}
\label{sec:Int}

%%%

\subsection{Background and motivation}

Describing the evolution of dynamic networks together with dynamic processes running on them constitutes a major challenge in network science. Despite considerable efforts in past years, and notable progress on an intuitive and approximative level, our \emph{mathematical} understanding of such systems is still in its infancy. The current mathematical literature consists of only a handful of scattered results. Examples are: random walks on dynamical percolation \cite{PSS15}, \cite{PSS20}, random walks on graphs with dynamic rewiring \cite{AGHdH18}, random walks on graphs with dynamic switching \cite{ST20}, \cite{AGHdH19}, the contact process on dynamic random graphs \cite{JM17}, \cite{CSW22},  the voter model on dynamic random graphs \cite{BS17}, \cite{CR18}, \cite{CDFR18}, \cite{BdHM22pr}, \cite{RZ23}, \cite{ABHdHQ24}, \cite{ACHQ23pr}, \cite{ABHdHQ24pr}, \cite{Cap2024}. Since double dynamics are prevalent in most real-world networks, there is an urgent need for breakthroughs. 

The \emph{voter model}, initially introduced as an interacting particle system on a lattice \cite{HL75}, \cite{L85}, can be used to model the evolution of opinions in a social network. The objective is to study the way in which individuals interact and change their opinion. Empirical studies have revealed the presence of evolution patterns in which the individuals split into communities with different opinions, and few disagreeing edges, that remain stable over long periods of time, a phenomenon commonly referred to as \emph{polarisation}. However, in voter models on static networks eventually all the individuals hold the same opinion, a situation referred to as \emph{consensus}. There is strong evidence that polarisation can only occur in a dynamic network, in the sense that the interplay between the opinion dynamics and the network dynamics plays a crucial role. This interplay, in the literature referred to as \emph{co-evolution}, leads to challenging mathematical questions, as a consequence of the intrinsically complex joint dynamics.

The last few years have witnessed a growing number of papers describing polarisation effects for co-evolutionary random networks via simulations and heuristic arguments, for classic voter models \cite{MKF03}, \cite{GZ06}, \cite{ZG06}, \cite{PTN06}, \cite{RG17}, \cite{RM20}, \cite{WW23pr}, \cite{BVP24}, nonlinear voter models with discrete opinions \cite{MM19}, \cite{LHAJS20}, \cite{M23}, \cite{JTSZH20}, as well as their extension to continuous opinions \cite{KB08a}, \cite{KB08b}, \cite{IKKB09}, \cite{HPZ11}. These polarisation effects can be mitigated or promoted by the rules according to which the opinions and the network evolve. Indeed, empirical studies \cite{DSCSQ17}, \cite{SCPCFM21}, \cite{ACAHFM22} show that social influence alone does not affect the network structure, so that opinions may become polarised only after a long time, while network rewiring alone makes polarisation inevitable (like-minded individuals cluster together). The proliferation of polarisation may be an inevitable outcome of the human tendency to be influenced by information and opinions which one is exposed to, and that of disliking disagreeable social ties. Social influence and rewiring appear to provide synergistic conditions for the rapid formation of completely segregated and polarised clusters.

In \cite{AdHR24}, a model is analysed where vertices decide to update at a rate proportional to the number of incident open edges, and do so by adopting the opinion of the vertex at the other end. Edges decide to update at a positive rate, and do so by switching between closed and open with a probability that depends on their status and on whether the vertices at their ends are concordant or discordant.
Coupled evolution equations are derived, in the dense graph limit, for the density of vertices with a given opinion and the density of open edges.

Motivated by the above developments, in the present paper the focus is on \emph{two-opinion voter models} on \emph{dynamic dense random graphs}. We consider three models, the first with a one-way feedback (Section~\ref{sec:oneway}), the second and third with a two-way feedback (Sections~\ref{sec:twoway1} and~\ref{sec:twoway2}). In all three models the dynamics of the edges depends on the dynamics of the vertices, but the reverse dependence is present only in the second and third model. It turns out that polarisation is possible only when there is a two-way interaction, but the two-way interaction does not guarantee polarisation. Indeed, the second model always reaches consensus unless we tweak it in a way that, unfortunately, makes it mathematically intractable. To overcome this hurdle, we introduce a third model that we are able to analyse mathematically and that goes through polarisation as well. 

To understand the dynamics of opinions and network for voter models with one-way and two-way feedback, we establish a functional law of large numbers. This functional law captures the evolution of the proportion of individuals holding each opinion, the firmness with which individuals hold their opinions (defined through ``vertex types''), and the network of connections. We use the functional law of large numbers to determine the parameter values for which the models move quickly toward consensus, and the parameter values for which both opinions co-exist for an extended period of time. In the latter case, we prove that the distribution of vertex types (which measure how firmly individuals hold their opinion) converges to a Beta-distribution when the time and the number of individuals in the network are large. While the three models we consider are hard to analyse in detail, and some questions remain open, with the help of supporting simulations we manage to develop a \emph{full picture} of their behaviour. 

A key novelty of the paper is that we establish the functional law of large numbers for the network of connections in the space of \emph{graphons} (see Appendix \ref{appA}). Indeed, while several authors have established limits of graph-valued processes in the space of graphons (see, for instance, \cite{CK20}, \cite{C16}, \cite{G22}, \cite{R}, \cite{RZ23}), to the best of our knowledge the present paper and \cite{AdHR24} are the first to do so for a \emph{co-evolutionary network}. In proving our results we construct a framework that is robust, in the sense that we believe that it can be applied to several other co-evolutionary networks of interest. 

%%%

\subsection{General co--evolutionary models}
\label{sec:genstrategy}

In this section we introduce a co-evolutionary network as a graph-valued process in order to provide a general framework for understanding how the graph and the process running on the graph evolve together. For a given time horizon $T>0$, let $(G_n(t))_{t \in [0,T]}$ denote the graph-valued process and $x_i(t) \in S$ denote the (discrete) quantity of interest (e.g.\, opinion, spin, number of particles) at each vertex $i \in [n]$ at time $t \in [0,T]$.

In particular, our focus is on providing a framework for establishing convergence in the space of graphons (see Appendix \ref{appA}).
Steps 1--3 below, which largely follows \cite{BdHM22pr}, describe how to establish convergence in the space of graphons for a model with one-way dependence (i.e., where the vertices evolve independently of the edges, but the edges depend on the vertices), while Step 4 below describes how this framework may be repurposed to co-evolutionary networks.

\medskip
\noindent
\underline{\emph{Step 1: Define vertex types and establish convergence of the empirical type process.}}
We assign to vertex $i$ at time $t$ a random variable $y_i(t) \in [0,1]$ that may fluctuate over time. The definition of $y_i(t)$ must be chosen carefully (see Step 2 below) and must be a functional of the path of the process of interest $(x_i(s))_{0 \leq s \leq t}$ (see \eqref{eqn:ydef} for an example). We refer to $X_i(t)=(x_i(t),y_i(t))$ as the \emph{generalised type} of vertex $i$ at time $t$, and we let $(F_n(t;\cdot))_{t\in[0,T]}$ denote the {\it empirical type process} characterised by
\[
F_n(t;\cdot) = \dfrac{1}{n} \sum_{i=1}^n \mathbf{1}\{X_i(t) \in \cdot\},
\]
which lives on $D(\cM(S \times [0,1]),[0,T])$, the space of $\cM(S \times [0,1])$-valued c\`adl\`ag paths over the time interval $[0,T]$, where $\cM(S \times [0,1])$ is the space of measures on $S \times [0,1]$ endowed with the topology of weak convergence. In order to show convergence of the empirical graphon process, we require the following assumption.

\medskip\noindent
{\bf Assumption 1.} Let $\Rightarrow$ denote convergence in distribution. We assume that $F_n\Rightarrow F$ as $n\to\infty$ on $D(\cM(S \times [0,1]),[0,T])$, that is, the empirical type process satisfies a stochastic-process limit. \hfill$\spadesuit$

\medskip
Suppose that, for $i \in [n]$, $(x_i(t),y_i(t))_{t \in [0,T]}$ are \emph{independent and identically distributed Markov processes}. Then the limiting path $F$, as $n$ grows large, can be determined by using the \emph{Kolmogorov forward equations} of the Markov process. Indeed, at any time $t$, the empirical distribution function $F_n(t;\cdot)$ converges to the distribution function $F(t;\cdot)$ of $(x_i(t),y_i(t))$, which can be computed by using the Kolmogorov forward equations. Under minor regularity conditions tightness can be established, thereby verifying Assumption 1. 

The infinitesimal generator $\cal L$ of the continuous-time Markov process $(X_i(t))_{t \geq 0} = (x_i(t),y_i(t))_{t\geq0}$ with state space $S\times[0,1]$ is defined through its action on a set of sufficiently smooth test functions $f\colon\, S \times [0,1] \mapsto \mathbb{R}$ as
\begin{equation}
\label{eq:generator}
({\cal L} f)(x,y) = \lim_{t \to 0} \dfrac{\ee[f(X_t)| X_0=(x,y)]-f(x,y)}{t}, \qquad x\in S, \quad y\in[0,1].
\end{equation}
When the process $(y_i(t))_{t\geq0}$ is a deterministic functional of the path $(x_i(t))_{t\geq0}$, which is the specific situation that we consider below, the latter can be written as \cite{EK86}
\begin{equation}
\label{eq:gendecomp}
({\cal L} f)(x,y) = ({\cal L}^S f)(x,y) + ({\cal L}^{[0,1]} f)(x,y),
\end{equation}
where ${\cal L}^S$, ${\cal L}^{[0,1]}$ are the generators associated to the processes $(x_i(t))_{t \geq 0}$, $(y_i(t))_{t \geq 0}$, respectively, which take the general form
\begin{equation}
\label{eq:genu}
\begin{array}{lll}
({\cal L}^{S} f)(x,y) 
&=& \displaystyle\sum_{x'\in S\setminus\{x\}} q_{x,x'} [f(x', y)-f(x,y)], \\[0.3cm]
({\cal L}^{[0,1]} f)(x,y) 
&=& b(x,y) \dfrac{\partial}{\partial y} f(x,y) + \dfrac{1}{2} \sigma^2(x,y) \dfrac{\partial^2}{\partial y^2} f(x,y),
\end{array}
\end{equation}
where $q_{x,x'}$ denotes the rate at which the process $(x_i(t))_{t\geq0}$ transitions from $x$ to $x'$, and $b(\cdot, \cdot)$ and $\sigma^2(\cdot, \cdot)$ denote the drift and the diffusion term, respectively. Once the generator of the process is given, then we follow the standard procedure  to derive the Kolmogorov forward equations \cite{EK86}. This procedure amounts to first expressing the densities $\mathbb{P}(X_i(t)\in(\cdot,\dd u))$ in terms of the generator ${\cal L}$, and then writing down a difference equation for the densities $\mathbb{P}(X_i(t+\Delta t)\in(\cdot,\dd u))$ for small $\Delta t$ by using the Markov property of the process and a Taylor series expansion, which eventually enables us to identify the associated differential equation by letting the time increment $\Delta t$ tend to zero.
In Sections~\ref{sec:oneway}, \ref{sec:twoway1} and \ref{sec:twoway2}, we will follow this procedure to derive a system of PDEs describing the time evolution of the densities of interests for the models that we consider.

\medskip
\noindent
\underline{\emph{Step 2: Express the edge connection probability in terms of the types:}} 
When the definition of $y_i(t)$ is chosen carefully, we may be able to express the probability that edge $ij$ is active in terms of $y_i(t)$, $y_j(t)$, and the path of the empirical distribution up to time $t$, $(F_n(s;\cdot))_{s \in [0,t]}$. This motivates the following assumption.

\medskip\noindent
{\bf Assumption 2.} At any time $t$, edge $ij$ is active with probability
\begin{equation}
\label{eq:genH}
H(t; y_i(t), y_j(t), (F_n(t;\cdot))_{t\in[0,T]}),
\end{equation}
conditionally independently of all the other edges given $y_i(t)$, $y_j(t)$ and $(F_n(s;\cdot))_{s\in[0,t]}$, where
\[
H\colon\, [0,T] \times [0,1]^2 \times D(\cM(S \times [0,1]),[0,T]) \mapsto [0,1]. 
\]
\hfill $\spadesuit$

\medskip
Assumption 2 implies that, while the probability that edge $ij$ is active at time $t$ can depend on the \emph{path} of the empirical type distribution, it depends on the types of the vertices $i$ and $j$ \emph{at time $t$ only}. In practice this means that, when computing the edge probability at time $t$, we need to be able to summarise the \emph{paths} of the quantities of interest $(x_i(t))_{t \in [0,T]}$ and $(x_j(t))_{t \in [0,T]}$ as numbers $y_i(t)$ and $y_j(t)$ through a single functional (see \eqref{eqn:typedef} for an example). We remark that in Section \ref{sec:nonlin} we will encounter a graph-valued process for which Assumption 2 fails.

\medskip
\noindent
\underline{\emph{Step 3: Establish convergence of the graphon valued process:}} The function $H$ gives rise to an {\it induced reference graphon process} $g^{[F]}$ characterised by
\begin{equation}
\label{eq:genrefgraphon}
g^{[F]}(t; x, y) = H\big(t; \bar{F}(t;x), \bar{F}(t; y), (F(t;\cdot))_{t\in[0,T]}\big),
\end{equation}
where $\bar{F}(t; \cdot)$ denotes the right-continuous generalised inverse of the distribution function $F$ with support $[0,1]$, i.e.,
\[
\bar F(t;u) = \inf\{x\in[0,1]\colon\, F(t;x)>u\}.
\]
If we let $F$ be the limit described in Assumption 1, then $g^{[F]}$ is our candidate limit for the graph-valued process $G_n(t)$ in the space of graphons. To show that this is indeed the correct limit, we use the continuous mapping theorem, for which we need the following assumption. 

\medskip\noindent
{\bf Assumption 3.} The map $F\mapsto g^{[F]}$ from $D(\cM(S \times [0,1]),[0,T])$ to $D((\mathcal{W},d_{\square}),[0,T])$ is continuous, where $\mathcal{W}$ and $d_{\square}$ are defined in Appendix \ref{appA}. \hfill $\spadesuit$

\medskip
Note that Assumption 3 is an assumption on the edge connection function $H$, which effectively requires that $H$ must be continuous in all its arguments. This means that two vertices $i$ and $j$ with similar types $y_i(t) \approx y_j(t)$ must have a similar edge connection probability with a third vertex $k$, regardless of the value of its type $y_k(t)$. With Assumptions 1-3 met we can then apply \cite[Theorem 3.10]{BdHM22pr} to establish convergence.

\medskip
\noindent
\underline{\emph{Step 4: Extension to co-evolutionary models:}} 
Recall that the goal of the present paper is to prove that $h^{G_n}\Rightarrow g^{[F]}$ as $n\to\infty$ in the space $D((\mathcal{W},d_{\square}),[0,T])$, where $h^{G_n}$ is the empirical graphon associated with the \emph{co-evolutionary process} $(G_n(t))_{t \in [0,T]}$ (see \eqref{eq:graphon}). While the framework above can be readily applied to models with a one-way feedback, there appears to be no direct way to apply it to models with a two-way feedback. Indeed, Assumption 1 becomes restrictive because we may expect that $(x_i(t),y_i(t))_{i \in [n]}$ are dependent. Also,  Assumption 2 becomes restrictive because we may expect that edges are not  conditionally independent given the types of the vertices.

One of the primary contributions of this paper is to demonstrate that the above framework can still be applied after approximating the co-evolutionary process by an intermediary \emph{mimicking process} with a one-way-dependence for which Assumptions 1-3 can be verified. 
We then apply the framework above to the mimicking process, and couple the mimicking process with the co-evolutionary process in such a way that discrepancies during the time interval $[0,T]$ have a sufficiently small probability.

To understand why such a mimicking process often exists, recall that in the present paper we consider \emph{dense} graph-valued processes. This means that there are $n$ vertices and ${n \choose 2} = O({n^2})$ edges. Hence, informally, each vertex is adjacent to order $n$ active edges. Consequently,  as $n \to \infty$ a law of large numbers kicks in at each vertex and, depending on the dynamics of the model, we may pretend the edges to behave effectively deterministically. In that case there is no longer a feedback loop between the randomness of the vertices and the randomness of the edges. It is this intuition that we rely on to construct a mimicking process that has only a one-way dependence, but that sufficiently closely approximates the co-evolutionary model with a two-way feedback. 

Note that the implementation of the above framework heavily depends on the specific model we analyse. In what follows we focus on three voter models on dynamic random graphs, introduced in Sections \ref{sec:oneway}, \ref{sec:twoway1}, and \ref{sec:twoway2}.

%%%

\subsection{Key definitions}
\label{sec:3models}

In this section we collect the key quantities that are needed to describe our three models. At any time $t\geqslant 0$, the set of vertices is $V = [n] = \{1,\ldots,n\}$ and any vertex holds opinion either $+$ or $-$. We assume that each edge is initially active with probability $p_0$, independently of everything else (i.e., we start from an Erd\H{o}s-R\'enyi random graph on $n$ vertices with `edge retention probability' $p_0$). Let 
\[
x_i(t) = \mbox{the \emph{opinion} of vertex $i$ at time $t$},
\]
and put
\begin{equation}
\label{eqn:ydef}
y_i(t) = \eee^{-t}y_i(0) + \int_{0}^t {\rm d}s\,{\rm e}^{-s}\, \mathbf{1}\{ x_i(t-s) = + \}.
\end{equation}
We let 
\begin{equation}
\label{eqn:typedef}
y_i(t) - \eee^{-t}y_i(0) = \int_{0}^t {\rm d}s\,{\rm e}^{-s}\, \mathbf{1}\{ x_i(t-s) = + \} = \mbox{ the \emph{type} of vertex $i$ at time $t$},
\end{equation}
which encodes how much time vertex $i$ has held opinion $+$ up to time $t$. Note that the type of each vertex at time $t=0$ equals 0.
We define 
\[
X_i(t)=(x_i(t),y_i(t)) = \mbox{the \emph{generalised type} of vertex $i$ at time $t$}.
\]
The generalised type belongs to the space $\{-,+\} \times [0,1]$ and keeps track of the evolution of the opinion. We assume that $(X_i(0))_{i \in [n]}$ are independent and identically distributed with a prescribed density (see the discussion in Section \ref{sec:outline}). In particular, for technical reasons we assume that the density of $X_i(0)$ is analytic for any $i\in[n]$.

The associated {\it empirical type process} is
\begin{equation}\label{eq:emptype}
F_n(t;+, u) = \frac{1}{n} \sum_{i=1}^n \mathbf{1}\{X_i(t) \in (+, [0,u])\},
\end{equation}
which lives on $D(\cM([0,1]),[0,T])$, the space of $\cM([0,1])$-valued c\`adl\`ag paths over the time interval $[0,T]$. To verify Assumption 1, we seek convergence to a limiting empirical type process $F(t;+, u)$ as $n\to\infty$.

\medskip\noindent
{\bf Assumption 4.} The labels of the vertices are updated dynamically so that, at any time $t$, they are ordered lexicographically: all opinion $+$ vertices have a lower label than opinion $-$ vertices, and between vertices with the same opinion the vertices are ordered by increasing type $y_i(t) - \eee^{-t}y_i(0)$.
\hfill $\spadesuit$

\medskip
In all models considered, we let the dynamic labelling of Assumption 4 be in force, meaning that we do not need to verify it. Under Assumption 4 we are able to write down a candidate limit $g^{[F]}$ for $h^{G_n}$ in the space of graphons, $\mathcal{W}$ defined in Appendix \ref{appA}. To do this, we let
\begin{equation}\label{eq:densities}
f_+(t,u)\,{\rm d}u = \mathbb{P}(X_i(t) \in (+,{\rm d}u) ), \qquad 
f_-(t,u)\,{\rm d}u = \mathbb{P}(X_i(t) \in (-,{\rm d}u) ),
\end{equation}
and define
\begin{equation}
\label{eqn:invdef}
\bar F(t;y) = \begin{cases}
\inf \{ s \in [0,1]\colon\, F(t; +, s) \geq y \}, \quad &\text{ if } 0 \leq y \leq F(t;+,1), \\
\inf\{s \in [0,1]\colon\, F(t; +, 1) + F(t; -, s) \geq y \}, \quad &\text{ otherwise},
\end{cases}
\end{equation}
where
$$
F(t; +, u) = \int_{0}^u f_+(t,s)\,{\rm d}s, \qquad F(t; -, u) = \int_{0}^u f_-(t,s)\,{\rm d}s,
$$
and define
\begin{equation}
\label{eqn:empgra}
g^{[F]}(t;x,y) = H(t; \bar F(t;x), \bar F(t; y)),
\end{equation}
where $H(t; u,v)$ is the probability that there is an active edge connecting two vertices with type $u$ and $v$ at time $t$. We refer to $g^{[F]}$ as the {\it induced reference graphon process} for $F\in D(\cM([0,1]),[0,T])$. 

%%%%

\subsection{Outline}
\label{sec:outline}

We will consider three different models of increasing complexity for the joint dynamics of opinions and graphs: one with one-way feedback and two with a two-way feedback. 
\begin{itemize}
  \item   
In the first model, each edge is equipped with a Poisson clock having the same rate, and when it rings the selected edge is active with a probability depending on the opinion of the vertices at its ends. For this model we prove three theorems. The first theorem concerns the \emph{convergence} as $n\to\infty$ of the empirical graphon-process associated to the graph-valued process $(G_n(t))_{t \in [0,T]}$ to the induced reference graphon-valued process $g^{[F]}$. The second theorem concerns the \emph{existence and uniqueness} of the induced reference graphon process. For this model we in addition establish an abstract representation of $g^{[F]}$. The third theorem serves to provide insight into structure of the solution in the limit as $t \to \infty$.
\item In the second model, the edge dynamics is the same as that of the first model. The vertex dynamics is different, though: each vertex is equipped with a Poisson clock having the same rate, and when the clock rings the vertex selects one of its neighbours uniformly at random and copies its opinion. We prove three theorems the correspond to those for the first model, but now the second theorem concerns only the {\it local uniqueness} of the induced reference graphon process $g^{[F]}$ and without its representation.
\item In the third model, there are $2q$ copies of the second model, for some $q\in{\mathbb N}$. Of these, $q$ are referred to as $g$-graphs and $q$ as $r$-graphs. These graphs evolve independently in parallel, but $g$-graphs and $r$-graphs have different parameters. The opinions of the vertices are the same in all the copies, but the states of the edges are different. The true graph is a function of the $g$-graphs and the $r$-graphs. The main idea behind the third model is that it is having a lower probability of connecting voters holding different opinions. We prove similar results as for the second model.
\end{itemize}
We support our theorems with numerical simulations.

For technical reasons we will restrict ourselves to \emph{analytic initial conditions}, i.e., we assume that the initial densities $f_+(0,\cdot)$ and $f_-(0,\cdot)$ are analytic, leaving the possible occurrence of singularities aside. This is the reason why we assume that the density of $X_i(0)$, $i\in[n]$, is analytic. We will also refrain from carrying through a stability analysis of the equilibrium solution, which is beyond the scope of the present paper.  

The remainder of this paper is organised as follows. In Sections~\ref{sec:oneway}, \ref{sec:twoway1} and \ref{sec:twoway2} we give the definition of the first, second and third model, respectively, state the main theorems, offer numerical simulations that illustrate the theorems, and provide a proof of the theorems. Sections~\ref{sec:oneway}, \ref{sec:twoway1} and \ref{sec:twoway2} can be read essentially independently. In Section~\ref{sec:conclusions} we draw our conclusions. Appendix~\ref{appA} collects some basic facts about graphons and Hoeffding's inequality, while Appendices~\ref{appB} and~\ref{appC} contain some technicalities that arise in the proofs of Sections \ref{sec:oneway} and \ref{sec:twoway1}, respectively.

%%%%%%%%% SECTION 2 %%%%%%%%%%%%%%%%%%%%%%%%

\section{First model: one-way feedback and coexistence}
\label{sec:oneway}

The first model, with one-way feedback, is characterised by the following dynamics.
\begin{itemize}
\item
{\it Vertex dynamics.} If a vertex holds opinion $-$ (respectively, $+$), then it switches its opinion to $+$ at rate $\gamma_{-+}$ (respectively, $\gamma_{+-}$). Both happen independently of everything else.
\item
{\it Edges dynamics.} Each edge is re-sampled at rate $1$, i.e., a rate-$1$ Poisson clock is attached to each edge and when the clock rings the edge is active with a probability that depends on the current opinion of the two connected vertices: with probability $\pi_+$ if the two vertices hold opinion $+$, with probability $\pi_-$ if the two vertices hold opinion $-$, and otherwise with probability $\tfrac12(\pi_++\pi_-)$.
\end{itemize}

Section~\ref{mod1:prep} steps through the framework in Section \ref{sec:genstrategy}, Section~\ref{mod1:mr} states the main results, Section~\ref{mod1:num} offers simulations, Sections~\ref{mod1:pr}--\ref{sec:nodepu} provide proofs and computations, respectively.

%%%

\subsection{Preparations}
\label{mod1:prep}

\medskip
\noindent
\emph{\underline{Step 1:}} Observe that, due to the one-way dependence of the model, the processes $((x_i(t),y_i(t))_{t \in [0,T]})_{i \in [n]}$ are independent and identically distributed. Consequently, for any $t \in [0,T]$
\begin{align*}
F_n(t; +,u) \to F(t,+,u) &= \int_{0}^u f_+(t,s)\,{\rm d}s, \qquad n \to \infty, \\
F_n(t; -,u) \to F(t,-,u) &= \int_{0}^u f_-(t,s)\,{\rm d}s, \qquad n \to \infty,
\end{align*}
where $F$ is the limit of the empirical type distribution in Assumption 1 and $f_+(\cdot,\cdot), f_-(\cdot,\cdot)$ are defined in \eqref{eq:densities}. To characterise $F$ we thus use the Kolmogorov forward equations of the Markov process $(x_i(t),y_i(t))_{t \in [0,T]}$. The infinitesimal generator of this process \eqref{eq:gendecomp} can be written as
\begin{equation}
	\label{eq:gen1}
	({\cal L} f) (x,y) = (\gamma_{+-}\mathbf{1}\{x=+\}+\gamma_{-+}\mathbf{1}\{x=-\})[f(x',y)-f(x,y)]
	+ b(x,y) \dfrac{\partial}{\partial y} f(x,y),
\end{equation}
where $x'$ is the opinion of the selected vertex after switching its initial opinion $x$ and $b(x,\cdot)$ is the drift term when starting from $x\in\{-,+\}$ that will be characterised later (see \eqref{eq:b+}-\eqref{eq:b-} below). This gives rise to the Kolmogorov forward equations in \eqref{eq:PDEs} below that characterise $F$ and verifies Assumption~1.

\medskip\noindent
\emph{\underline{Steps 2 and 3:}} Given the paths of $x_i(\cdot)$ and $x_j(\cdot)$, the probability that edge $ij$ is active at time $t$ is
$$
p_{ij}(t) = {\rm e}^{-t}p_{0} + \int^t_0 {\rm d}s \, {\rm e}^{-s} \tfrac{1}{2} \left[ \pi_{x_i(t-s)} +  \pi_{x_j(t-s)} \right]. 
$$
This equation can be understood by noting the following:
\begin{itemize}
\item 
The probability that the Poisson clock attached to edge $ij$ rings during the time interval $[t-s, t-s +{\rm d}s]$ is ${\rm d}s$.
\item 
The probability that this is the last time the Poisson clock attached to edge $ij$ rings during the time interval $[0,t]$ is ${\rm e}^{-s}$.
\item 
If the Poisson clock attached to edge $ij$ rings during the interval $[t-s, t-s + {\rm d}s]$, then the probability that edge $ij$ is active is $\frac{1}{2} \left[ \pi_{x_i(t-s)} +  \pi_{x_j(t-s)} \right]$.
\item 
Integrate over $s$ to obtain the formula for $p_{ij}(t)$.
\end{itemize}
Using that $\int_{0}^t {\rm d} s\,{\rm e}^{-s} = 1 - {\rm e}^{-t}$, we can write
\begin{align*}
p_{ij}(t) 
&= {\rm e}^{-t} p_{0} + \int^t_0 {\rm d}s\,{\rm e}^{-s} \tfrac{1}{2} \left[ \pi_{x_i(t-s)} +  \pi_{x_j(t-s)} \right] \\
&= {\rm e}^{-t} p_{0} + \int^t_0 {\rm d}s\,{\rm e}^{-s} \tfrac{1}{2} \big[ \pi_+ \mathbf{1}\{ x_i(t-s) = + \} + \pi_- \mathbf{1}\{ x_i(t-s) = - \} \\
&\qquad \qquad \qquad + \pi_+ \mathbf{1}\{ x_j(t-s) = + \} + \pi_- \mathbf{1}\{ x_j(t-s) = + \} \big] \\
&= {\rm e}^{-t}p_{0} + \int^t_0 {\rm d}s\,{\rm e}^{-s} \tfrac{1}{2} \big[ \pi_+ \mathbf{1}\{ x_i(t-s) = + \} + \pi_- (1- \mathbf{1}\{ x_i(t-s) = + \}) \\
&\qquad \qquad \qquad + \pi_+ \mathbf{1}\{ x_j(t-s) = + \} + \pi_- (1-\mathbf{1}\{ x_j(t-s) = + \}) \big]  \\[0.2cm]
&= {\rm e}^{-t}p_0 + \tfrac{1}{2} \big[ \pi_+  (y_i(t) - \eee^{-t}y_i(0)) + \pi_-(1-{\rm e}^{-t} - (y_i(t) - \eee^{-t}y_i(0))) \\[0.2cm] 
&\qquad \qquad \qquad +\pi_+ (y_j(t) - \eee^{-t}y_i(0)) + \pi_-(1-{\rm e}^{-t} - (y_j(t) - \eee^{-t}y_i(0))) \big].
\end{align*}
Consequently, the probability $H(t;u,v)$ of having an active edge between two vertices with type $u$ and $v$ at time $t$ is given by 
\begin{equation}
\label{eqn:Hdef}
H(t;u,v) = {\rm e}^{-t} p_0 + \tfrac{1}{2}\left[ \pi_+ u + \pi_-(1-{\rm e}^{-t} - u) +  \pi_+  v +  \pi_- (1-{\rm e}^{-t} - v) \right].
\end{equation}
This verifies Assumption 2. Note that $H$ is continuous, which implies that Assumption 3 also holds.

\begin{remark}
{\rm Note that when $\pi_+=\pi_-=\pi$, then the function $H$ takes the form
$$
H(t;u,v) = \eee^{-t} p_0 + \pi(1-\eee^{-t}),
$$
so that the probability of having an active edge between two vertices at any time is the same. This means that the graphon in \eqref{eqn:empgra} is constant, i.e., is the graphon associated to an Erd\H{o}s-R\'enyi random graph.}
\end{remark}

%%%

\subsection{Main results: Theorems~\ref{thm:graphonconv}--\ref{thm:systempdeasymp}}
\label{mod1:mr}

Our first theorem shows convergence of the empirical graphon dynamics associated with our graph-valued process $(G_n(t))_{t \in [0,T]}$ to the induced reference graphon process $g^{[F]}$.

\begin{theorem}
\label{thm:graphonconv}
$h^{G_n}\Rightarrow g^{[F]}$ as $n\to\infty$ in the space $D((\mathcal{W},d_{\square}),[0,T])$, where $h^{G_n}$ is the empirical graphon associated with $G_n$ defined in \eqref{eq:graphon}.
\end{theorem}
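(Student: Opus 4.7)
My plan is to follow the four-step framework described in Section~\ref{sec:genstrategy} and ultimately invoke \cite[Theorem 3.10]{BdHM22pr} after verifying Assumptions 1--3 in the present one-way-feedback setting. The crucial simplification for this first model is that, since edges do not feed back into vertices, the joint processes $((x_i(t),y_i(t)))_{t\in[0,T]}$ for $i\in[n]$ are independent and identically distributed Markov processes with generator \eqref{eq:gen1}. The conditional independence of edges given the vertex paths, combined with the explicit representation \eqref{eqn:Hdef} of the edge-connection probability $H$, is exactly what makes the framework directly applicable.

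First I would establish Assumption~1, i.e., the convergence $F_n\Rightarrow F$ on $D(\cM([0,1]),[0,T])$. By the i.i.d.\ structure, for each fixed $t$ the classical Glivenko--Cantelli theorem yields $F_n(t;\cdot)\to F(t;\cdot)$ in probability, where the limiting densities $f_+,f_-$ are obtained by applying the generator \eqref{eq:gen1} against indicator test functions, leading to the Kolmogorov forward system \eqref{eq:PDEs}. The drift $b(x,y)$ is bounded (in fact $|b|\leqslant 1$ since $y_i$ is a time-integrated indicator) and the jump rates $\gamma_{+-},\gamma_{-+}$ are constant, so the modulus-of-continuity criterion for Skorokhod tightness of measure-valued càdlàg paths (e.g.\ via Aldous' criterion applied to sufficiently rich test functions) holds without difficulty. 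Uniqueness of the limit is guaranteed because the Kolmogorov equations determine $F$ uniquely from the analytic initial density.

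Assumptions~2 and~3 are essentially already verified in Section~\ref{mod1:prep}: the derivation leading to \eqref{eqn:Hdef} shows that, conditional on the vertex paths, edges are independent Bernoulli with success probability depending only on the current types $y_i(t),y_j(t)$ through the jointly Lipschitz function $H(t;u,v)$. Lipschitz continuity of $H$ in $(u,v)$, together with continuity in $t$, transfers through to continuity of the map $F\mapsto g^{[F]}$ once one checks that the generalised inverse $\bar F$ in \eqref{eqn:invdef} is continuous in $F$ with respect to the relevant topologies. The latter reduces to showing that the limiting densities $f_\pm(t,\cdot)$ are strictly positive on their supports, which follows from the analyticity assumption on the initial condition propagated by the PDE system.

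The one step where genuine probabilistic content beyond verifying assumptions enters, and which I expect to be the main obstacle to make rigorous, is bridging the gap between the \emph{random} empirical graphon $h^{G_n}$ and its conditional expectation $g^{[F_n]}$ given the vertex processes. This is handled by a blockwise Hoeffding-type concentration bound (see Appendix~\ref{appA}): partitioning $[0,1]^2$ into blocks of mesh $o(1)$ but containing $\omega(\log n)$ indices, the conditional independence of edges gives exponentially small fluctuations in each block, which combines with a union bound to show $d_{\square}(h^{G_n},g^{[F_n]})\to 0$ in probability, uniformly in $t\in[0,T]$. Assumption~4 (dynamic lexicographic relabelling) is what guarantees that $h^{G_n}$ really approximates $g^{[F_n]}$ rather than an arbitrary measure-preserving rearrangement thereof. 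Combining this concentration with the continuous mapping theorem applied to $F_n\Rightarrow F$ via the continuous map $F\mapsto g^{[F]}$ yields $h^{G_n}\Rightarrow g^{[F]}$ in $D((\mathcal{W},d_{\square}),[0,T])$, which is precisely the content of \cite[Theorem 3.10]{BdHM22pr} specialised to our setting.
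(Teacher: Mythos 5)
Your overall strategy coincides with the paper's: verify the hypotheses of \cite[Theorem 3.10]{BdHM22pr} using the one-way feedback structure (i.i.d.\ vertex processes, conditional independence of edges given types, continuity of $H$ in \eqref{eqn:Hdef}) and then invoke that theorem. Your treatment of Assumptions 1--3 matches what the paper does in Steps 1--3 of Section~\ref{mod1:prep}, and your ``blockwise Hoeffding'' discussion is essentially a description of the internals of the cited theorem rather than an additional step you need to supply.

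However, there is one genuine gap, and it is precisely the single item the paper's proof actually writes out: the tightness of the \emph{graphon-valued path} $(h^{G_n}(t))_{t\in[0,T]}$ in $D((\mathcal{W},d_{\square}),[0,T])$, i.e.\ \cite[Assumption 3.9]{BdHM22pr}. You assert that pointwise concentration of $d_\square(h^{G_n}(t),g^{[F_n]}(t))$ holds ``uniformly in $t\in[0,T]$'' via a union bound, but a union bound cannot be taken over the continuum of times: after discretising $[0,T]$ into windows of length $\delta$ one must control how much the empirical graphon can move \emph{within} a window, which requires bounding the number of edges that change state there. The paper does this by introducing $C_n(t,\delta)$, the number of edges flipping in $[t,t+\delta]$, observing that since each edge carries a rate-$1$ resampling clock one has the stochastic domination $C_n(i\delta,2\delta)\stackrel{{\rm st}}{\leq}{\rm Bin}\bigl(\binom{n}{2},1-\eee^{-2\delta}\bigr)$, and concluding with a Chernoff bound that
\[
\lim_{\delta \downarrow 0} \limsup_{n\to\infty} \sup_{t\in[0,T]} \frac{T}{\delta}\,
\mathbb{P}\Bigl(C_n(t,\delta)>\varepsilon\tbinom{n}{2}\Bigr) = 0.
\]
Without this modulus-of-continuity control on the edge process, your argument only delivers convergence of finite-dimensional (fixed-$t$) marginals, not convergence in the Skorokhod space; your Aldous-criterion tightness argument applies to the measure-valued type process $F_n$, which is a different object and does not substitute for tightness of $h^{G_n}$.
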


Our second theorem allows for an identification of the induced reference graphon process, albeit an implicit one, by providing expressions for the densities $f_+(t,u)$ and $f_-(t,u)$. Put
\begin{equation}
\label{eq.vecf}
\vec{v}(t,u) = \left(\begin{matrix} f_+(t,u) \\ f_-(t,u)\end{matrix}\right).
\end{equation}

\begin{theorem}
\label{thm:systempde}
For every $t\in[0,T]$ and $u\in[0,1]$,
$$
\vec{v}(t,u) = \eee^{t\left(N+M(u)\frac{\partial}{\partial u}\right)} \left(\begin{matrix} g_+(u) \\ g_-(u)\end{matrix}\right),
$$
where 
\begin{equation}
\label{eq:defM}
N = \left(\begin{matrix} 1-\gamma_{+-} & \gamma_{-+} \\ \gamma_{+-} & 1-\gamma_{-+} \end{matrix}\right), \qquad 
M(u) = \left(\begin{matrix} u-1 & 0 \\ 0 & u \end{matrix}\right),
\end{equation}
and $g_+(\cdot),g_-(\cdot)$ are the initial conditions given by $g_+(u)=f_+(0,u)$ and $g_-(u)=f_-(0,u)$.
\end{theorem}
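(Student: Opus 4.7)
The plan is to derive the Kolmogorov forward equations for the density vector $\vec v(t,u) = (f_+(t,u), f_-(t,u))^T$ and to identify the resulting linear evolution with the one generated by the operator $L := N + M(u)\partial_u$, so that the claimed formula $\vec v(t,u) = \eee^{tL}\vec v(0,u)$ becomes the standard semigroup representation of the Cauchy problem's solution. First I would compute the drift term $b(x,y)$ appearing in the generator \eqref{eq:gen1}: differentiating the defining integral \eqref{eqn:ydef} between opinion switches yields $\dot y_i(t) = -y_i(t) + \mathbf{1}\{x_i(t) = +\}$, so $b(+,y) = 1-y$ and $b(-,y) = -y$. Following the procedure described in Section~\ref{sec:genstrategy}, I would then equate $\tfrac{\dd}{\dd t}\mathbb{E}[\varphi(X_i(t))]$ with $\mathbb{E}[(\mathcal{L}\varphi)(X_i(t))]$ for smooth $\varphi(x,u)$, express both sides as integrals against $f_+(t,u)\,\dd u$ and $f_-(t,u)\,\dd u$, and integrate by parts in $u$. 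Because $b(+,1)=0$ and $b(-,0)=0$, and because the densities are supported in the admissible range of the type, no boundary terms appear, and duality produces
\[
\partial_t f_+ = -\partial_u\!\bigl[(1-u)f_+\bigr] - \gamma_{+-}\, f_+ + \gamma_{-+}\, f_-, \qquad \partial_t f_- = \partial_u\!\bigl[u\, f_-\bigr] + \gamma_{+-}\, f_+ - \gamma_{-+}\, f_-.
\]
Expanding the transport terms and stacking them into vector form gives exactly $\partial_t \vec v = (N + M(u)\partial_u)\vec v$ with $N$ and $M(u)$ as in \eqref{eq:defM}.

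Having identified the PDE as $\partial_t \vec v = L\vec v$, the formula $\vec v(t,u) = \eee^{tL}(g_+(u),g_-(u))^T$ is simply the Cauchy problem's solution expressed through the semigroup generated by $L$. To turn this into a rigorous statement I would use the standing assumption that $g_\pm$ are analytic on $[0,1]$: set
\[
\eee^{tL}\vec v(0,u) := \sum_{k \geq 0} \frac{t^k}{k!}\, L^k\vec v(0,u),
\]
and show convergence by controlling the Cauchy coefficients of $L^k\vec v(0,\cdot)$ on a complex neighbourhood of $[0,1]$. Because $N$ is constant and $M(u)$ is affine in $u$, the operator $L$ preserves analyticity, and a Cauchy-Kowalevski-type estimate bounds the factorial growth of iterated derivatives on shrinking neighbourhoods. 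Short-time convergence together with iteration then extends the representation to the full interval $[0,T]$, and uniqueness is automatic from the linearity of the PDE.

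The main obstacle is making the unbounded operator exponential $\eee^{tL}$ rigorous, since each application of $L$ costs one derivative and only the analyticity of the initial data prevents the series from diverging. A parallel route that sidesteps this difficulty is the method of characteristics along the flows $u(t) = 1-(1-u_0)\eee^{-t}$ and $u(t) = u_0\, \eee^{-t}$ associated to $M(u)\partial_u$, which reduces the PDE system to a linear matrix ODE along each characteristic, with the two channels coupled through $N$. Either route yields the claimed formula after resummation.
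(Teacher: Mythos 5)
Your proposal is correct and follows essentially the same route as the paper: derive the drift terms $b(+,y)=1-y$, $b(-,y)=-y$ from \eqref{eqn:ydef}, obtain the Kolmogorov forward equations of Lemma~\ref{lmm:PDE1} (your divergence-form equations expand exactly to \eqref{eq:PDEs}), stack them into $\partial_t \vec v = (N+M(u)\partial_u)\vec v$, and exponentiate the time-independent operator. The only difference is that you spend effort making $\eee^{tL}$ rigorous via analyticity of the initial data (and offer characteristics as a fallback), whereas the paper simply asserts the semigroup formula at this point; your added care is compatible with, and arguably tightens, the paper's argument.
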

Define the number of vertices having opinion $+$ and $-$ at time $t$ as
\[
N_+(t)=\sum_{i=1}^n \mathbf{1}\{ x_i(t) = + \}, \qquad N_-(t)=\sum_{i=1}^n \mathbf{1}\{ x_i(t) = - \},
\]
respectively. Our third theorem investigates the asymptotic behaviour of $N_+(t)$, $N_-(t)$ and $\vec{v}(t,u)$ when $n,t\to\infty$.

\begin{theorem}
\label{thm:systempdeasymp}
The following statements hold.
\begin{itemize}
	\item[(i)] For any $t\in[0,T]$,
	\[
	\mathbb{P}\left(\lim_{n\to\infty} \dfrac{N_+(t)}{n}= \mathbb{P}(x_1(t)=+)\right)=1, \qquad 
	\mathbb{P}\left(\lim_{n\to\infty} \dfrac{N_-(t)}{n}= \mathbb{P}(x_1(t)=-)\right)=1.
	\]
	In particular, as $n\to\infty$ followed by $t\to\infty$, almost surely $N_+(t)/n$ converges to $\gamma_{-+}/(\gamma_{+-} + \gamma_{-+})$ and $N_-(t)/n$ converges to $\gamma_{+-}/(\gamma_{+-} + \gamma_{-+})$, i.e., the limiting proportion of vertices holding opinion $+$ and $-$ are given by the stationary distribution of a Markov chain on the state space $\{+,-\}$ with switching rate $\gamma_{+-}$ and $\gamma_{-+}$, respectively.
	\item[(ii)] For any $u\in[0,1]$, 
	\begin{equation}
		\label{eq:limdensities}
	\lim_{t\to\infty}\vec{v}(t,u) := \left(\begin{matrix} f_+(\infty,u) \\ f_-(\infty,u)\end{matrix}\right)
	= \left(\begin{matrix} u  \\ 1-u \end{matrix}\right) f_{\gamma_{-+}, \gamma_{+-}} (u),
	\end{equation}
	where $f_{\gamma_{-+}, \gamma_{+-}} (\cdot)$ is the density corresponding to a ${\rm Beta}(\gamma_{-+},\gamma_{+-})$ random variable, i.e.,
	\[
	f_{\gamma_{-+}, \gamma_{+-}} (u) = u^{\gamma_{-+}-1} (1-u)^{\gamma_{+-}-1}
	\mathbf{1}\{u\in[0,1]\}.
	\]
\end{itemize}
\end{theorem}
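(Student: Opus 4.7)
For part (i), I would exploit the fact that the one-way feedback means the opinion processes $(x_i(t))_{t\geq 0}$, $i\in[n]$, are i.i.d.\ continuous-time two-state Markov chains with switching rates $\gamma_{+-}$ and $\gamma_{-+}$. For fixed $t$, the indicators $\mathbf{1}\{x_i(t)=+\}$ are then i.i.d.\ Bernoulli variables with mean $\mathbb{P}(x_1(t)=+)$, and Kolmogorov's strong law of large numbers yields the almost-sure statement. The $t\to\infty$ claim then follows from the standard fact that the two-state chain is irreducible and converges to its stationary distribution $(\gamma_{-+}/(\gamma_{+-}+\gamma_{-+}),\gamma_{+-}/(\gamma_{+-}+\gamma_{-+}))$.

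For part (ii), my plan has three steps. First, I would specialise the PDE system of Theorem~\ref{thm:systempde} to stationarity, i.e.\ impose $[N+M(u)\partial_u]\vec{v}^{\infty}(u)=0$, which gives the two coupled ODEs
\begin{equation*}
(1-u)\partial_u f_+^{\infty}(u) = (1-\gamma_{+-})f_+^{\infty}(u)+\gamma_{-+}f_-^{\infty}(u), \qquad -u\,\partial_u f_-^{\infty}(u) = \gamma_{+-}f_+^{\infty}(u)+(1-\gamma_{-+})f_-^{\infty}(u).
\end{equation*}
Second, I would verify by direct differentiation that the proposed $f_+^{\infty}(u)=u^{\gamma_{-+}}(1-u)^{\gamma_{+-}-1}$ and $f_-^{\infty}(u)=u^{\gamma_{-+}-1}(1-u)^{\gamma_{+-}}$ solve this system; this is an algebraic check in which the cross terms match the source terms after recognising $f_\pm^{\infty}$ on the right-hand sides.

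Third, and this is the substantive step, I would establish convergence $\vec{v}(t,\cdot)\to\vec{v}^{\infty}(\cdot)$ as $t\to\infty$. The cleanest route is probabilistic: introduce the type $z_i(t):=y_i(t)-\eee^{-t}y_i(0)$ and observe that $(x_i(t),z_i(t))$ is a piecewise deterministic Markov process on $\{-,+\}\times[0,1]$, with $x_i$ switching as an autonomous two-state chain and $z_i$ evolving by the contractive ODE $\dot z_i=\mathbf{1}\{x_i=+\}-z_i$ between switches. Because $x_i$ is exponentially ergodic and $z_i$ depends continuously on the path of $x_i$ through a contractive flow, a Foster–Lyapunov argument (or, alternatively, a direct coupling of two copies of the PDMP that starts from different types but shares the same $x$-chain after coupling) yields geometric ergodicity of $(x_i(t),z_i(t))$. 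The invariant distribution identified in step two must therefore be the limit, and one extracts the density statement using the analyticity of the initial densities, which propagates along the characteristics $\dot u=\pm(1-u),-u$ and prevents concentration onto singular sets.

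The main obstacle will be the third step: upgrading distributional convergence to the pointwise/density-level convergence asserted in \eqref{eq:limdensities} requires uniform-in-$t$ regularity of $\vec{v}(t,\cdot)$, and extra care near the absorbing boundary points $u=0,1$ of the characteristic flow, where $\vec{v}^{\infty}$ is itself singular whenever $\gamma_{-+}<1$ or $\gamma_{+-}<1$. A workable alternative would be to attack the large-$t$ behaviour directly from the semigroup representation of Theorem~\ref{thm:systempde}: diagonalising $N$ (eigenvalues $1$ and $1-\gamma_{+-}-\gamma_{-+}$) and solving by characteristics shows that the non-stationary modes of $N+M(u)\partial_u$ decay at rate $\gamma_{+-}+\gamma_{-+}>0$, again leaving the Beta profile as the only surviving mode.
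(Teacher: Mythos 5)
Your part (i) coincides with the paper's argument (strong law of large numbers for the i.i.d.\ opinion indicators, then ergodicity of the two-state chain), and your stationary system and the algebraic verification that $f_+^{\infty}(u)=u^{\gamma_{-+}}(1-u)^{\gamma_{+-}-1}$, $f_-^{\infty}(u)=u^{\gamma_{-+}-1}(1-u)^{\gamma_{+-}}$ solve it are exactly the paper's first half of part (ii) (the paper packages the check via the logarithmic-derivative identity for the Beta density, but it is the same computation). Where you genuinely diverge is in what comes after. The paper does \emph{not} attempt to prove convergence $\vec{v}(t,\cdot)\to\vec{v}^{\infty}(\cdot)$ dynamically; consistent with its stated decision to forgo a stability analysis, it instead shows that the Beta profile is the \emph{unique analytic} solution of $M(u)\partial_u\vec{v}+N\vec{v}=0$, by expanding $f_\pm(\infty,u)=\sum_k f_{\pm,k}u^k$, equating Taylor coefficients to obtain the recursion \eqref{eq:systmod1as}, eliminating $f_{-,k}$, and observing that all coefficients are determined by $f_{+,0}$, which is in turn fixed by the normalisation of $f_++f_-$. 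Your Step 3 aims higher — actual geometric ergodicity of the PDMP $(x_i(t),z_i(t))$ via coupling or Foster--Lyapunov, or alternatively a spectral decay argument — and if completed it would subsume the paper's uniqueness step. But as written it is a plan rather than a proof: you yourself flag the gap in upgrading weak convergence of the law of $(x_i(t),z_i(t))$ to the pointwise density convergence asserted in \eqref{eq:limdensities}, and the closing spectral heuristic is not sound as stated, since $N$ has eigenvalue $1$ and does not commute with $M(u)\partial_u$, so the decay rate of the full transport operator cannot be read off from the eigenvalues of $N$ alone. If you want a self-contained argument at the level of the paper, you should replace Step 3 by the power-series uniqueness argument; if you want the stronger convergence statement, the PDMP coupling route is plausible (the type flow $\dot z=\mathbf{1}\{x=+\}-z$ is indeed a contraction once the $x$-chains are coupled) but the density-level conclusion near $u=0,1$ still needs to be supplied.
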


\begin{remark}
{\rm Observe that, in the regime where $t$ is large, the quantity $y_i(t)$ is well approximated by the type of vertex $i$ at time $t$, i.e., $\int_0^t {\rm d}s \, e^{-s}\mathbf{1}\{x_i(t-s)=+\}$. The type is then close to zero if the vertex has held opinion $-$ for an extended period of time, close to one if the vertex has held opinion $+$ for an extended period of time, and close to $0.5$ if the vertex has been regularly flipping between opinions $+$ and $-$. As a consequence, if $\gamma_{+-}=\gamma_{-+}=\gamma$ and $\gamma$ is large, then we expect that most vertices have been regularly flipping between opinions $+$ and $-$, and therefore have type close to 0.5, whereas if $\gamma$ is small, then we expect that most vertices have held either opinion $+$ or $-$ for an extended period of time, and therefore have type close to 0 or 1. This intuition is confirmed by Theorem \ref{thm:systempdeasymp}\emph{(ii)}, and is illustrated in the top left and middle right panels of Figure \ref{fig:histmod1} below. }
\end{remark}

Theorems \ref{thm:graphonconv}--\ref{thm:systempdeasymp} fully accomplish our goal of characterising the functional law of large numbers in the space of graphons for this first model. In particular, we deduce that this model leads to {\it coexistence} only, meaning that both opinions survive, but there appear \emph{many} edges connecting different opinions in the resulting graph. Although there is only a one-way feedback between vertex and edge dynamics, there appears to be no explicit expression for the densities $f_+$ and $f_-$. Note that Theorem \ref{thm:systempdeasymp} {\it fully} characterises the limiting proportion of vertices having opinion $+$ and $-$, as well the limiting densities $f_+$ and $f_-$. This is possible because there is a one-way feedback only. As we will see later, it is still possible for the second model, even though it gives rise to a richer phenomenology (see Theorem \ref{thm:systempdeasymp2} below), while for the third model only weaker results are obtained (see Theorem \ref{thm:systempdeasymp3} and Conjecture \ref{conjecture} below).

%%%%%%%%%%%%%%%%%%%%%%%%%%%%%%%%%%%%%%%%%%%%%%%%%%%%%%%%%
\begin{figure}%[htbp]
\includegraphics[width=5cm]{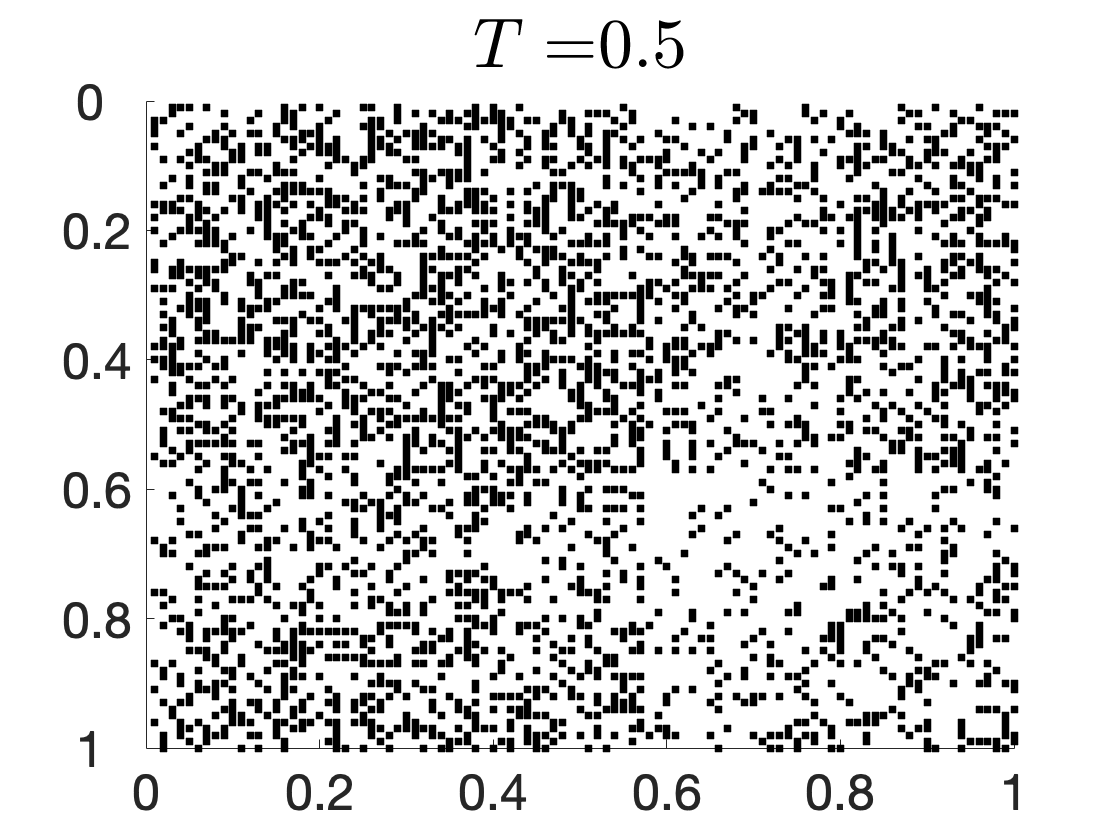}
\includegraphics[width=5cm]{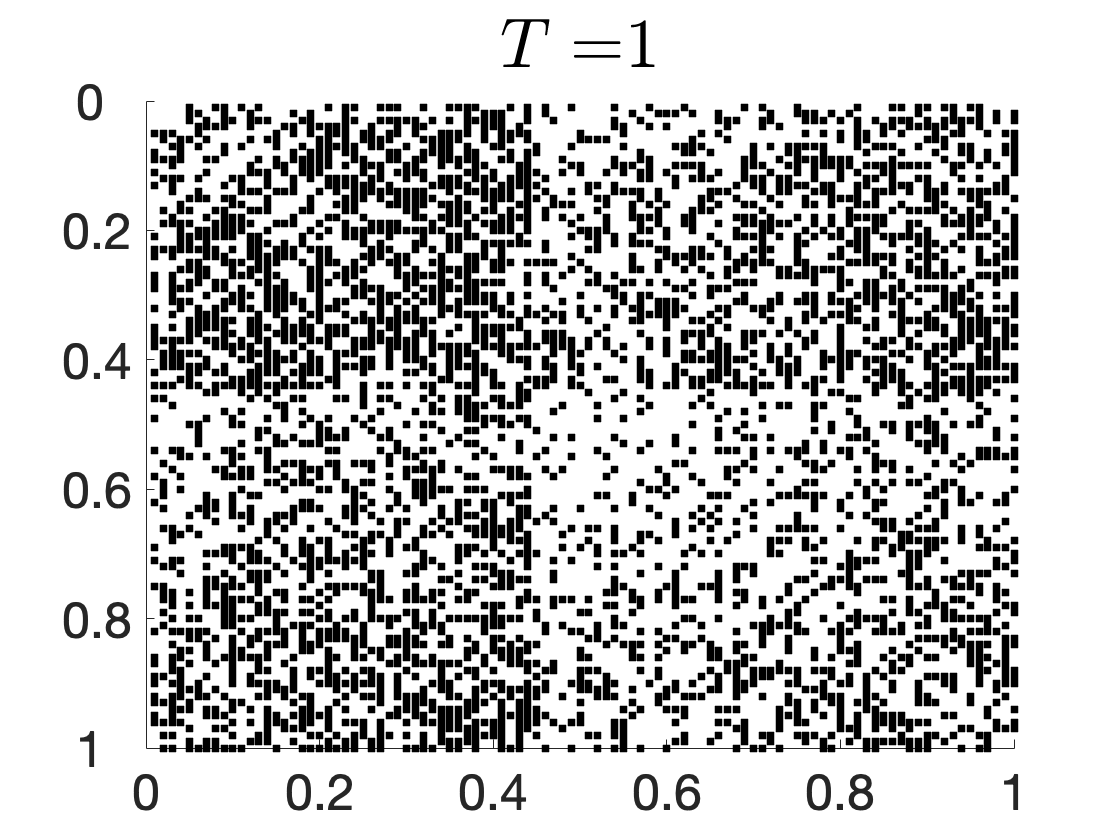}
\includegraphics[width=5cm]{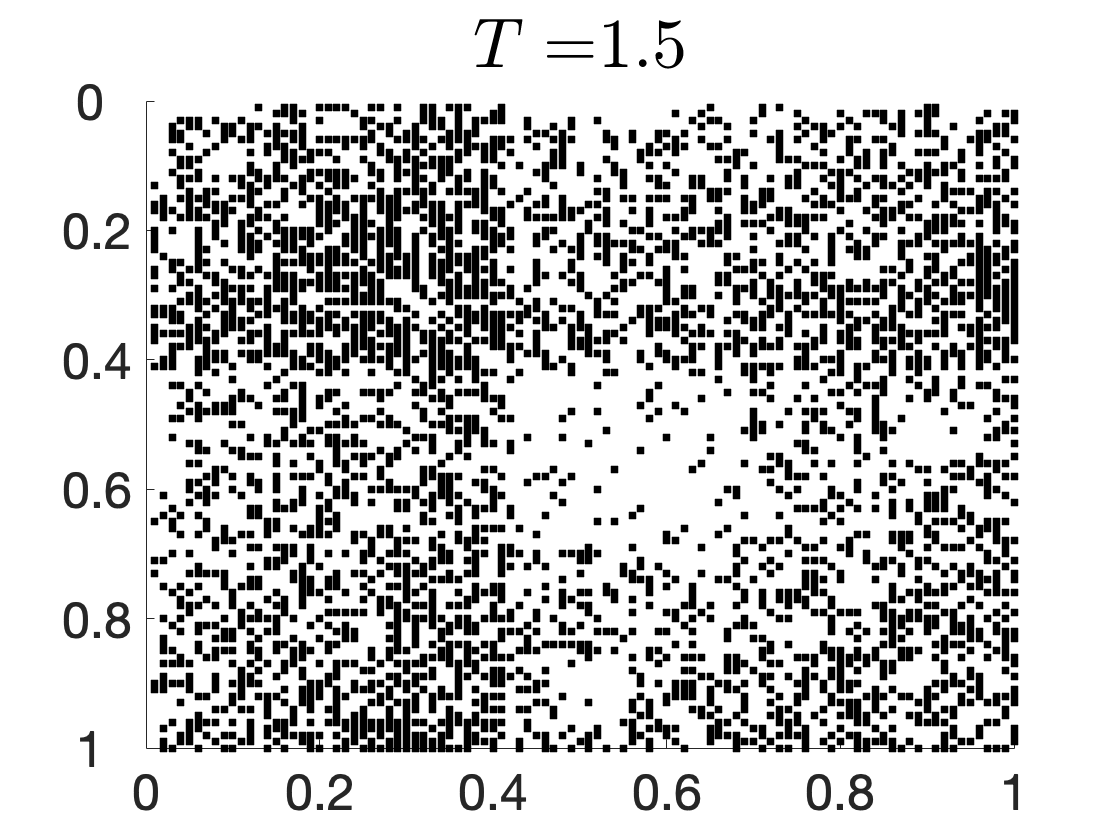}
	
\includegraphics[width=5cm]{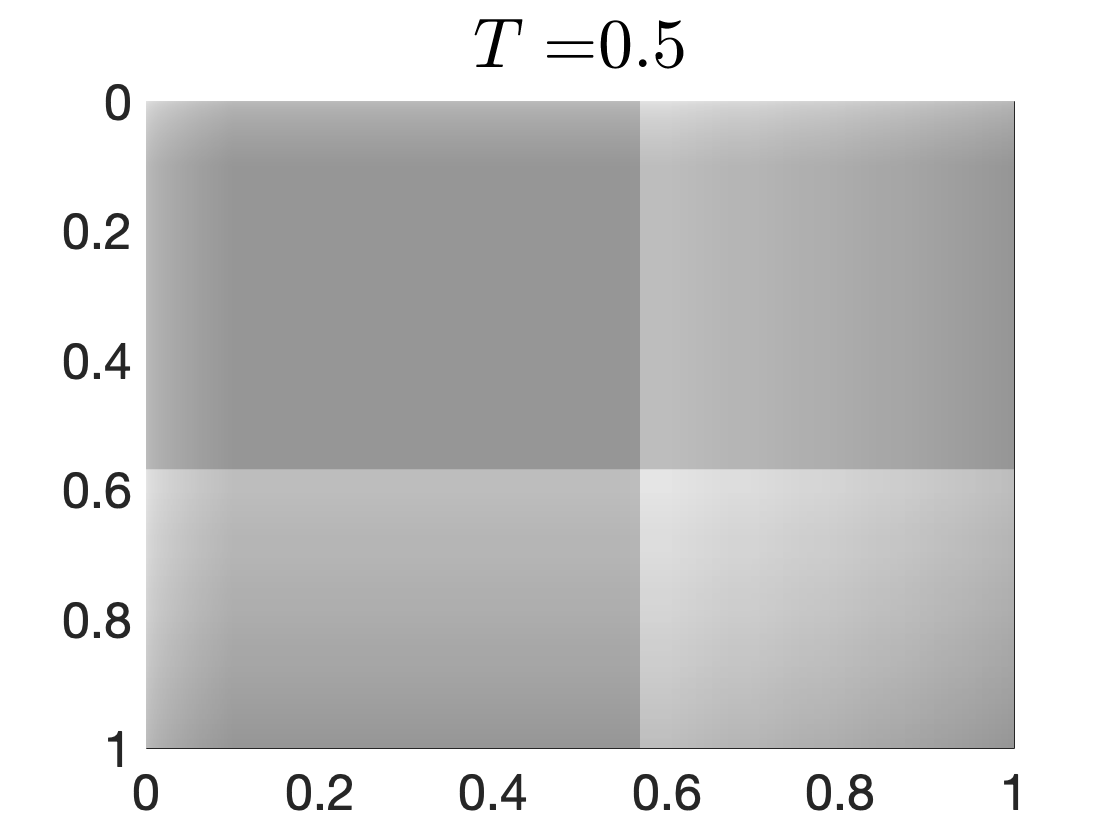}
\includegraphics[width=5cm]{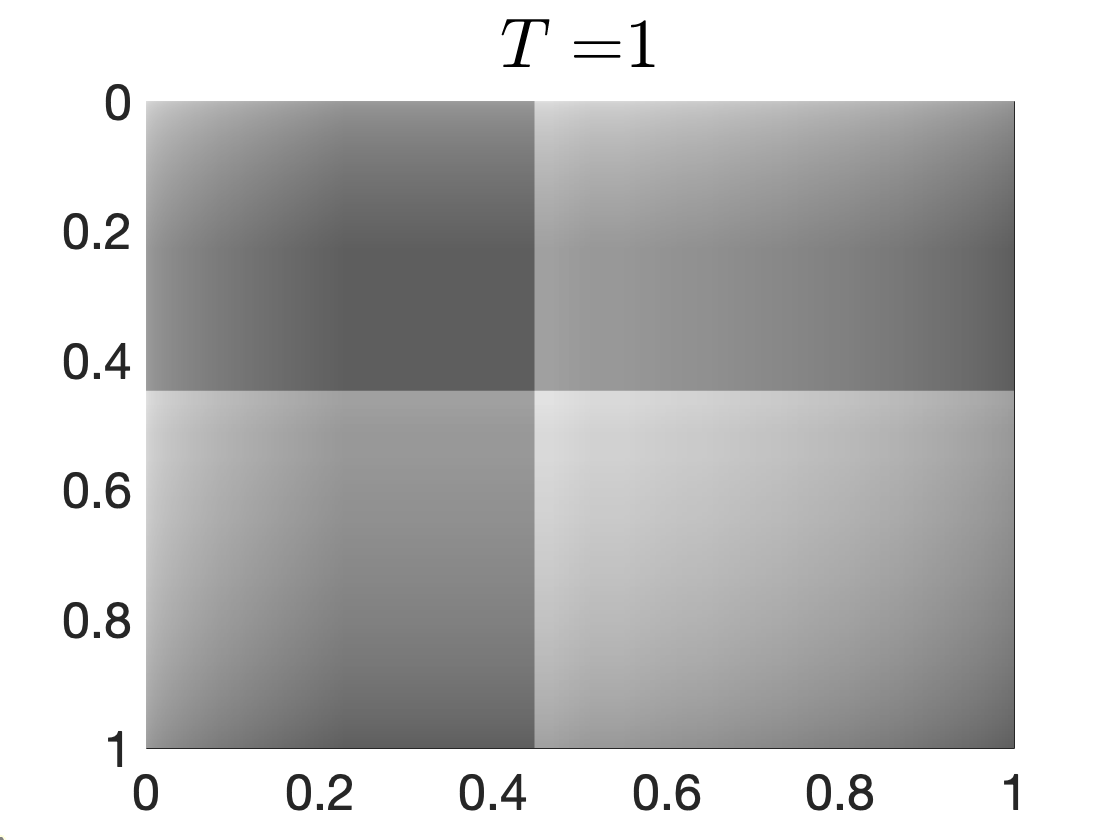}
\includegraphics[width=5cm]{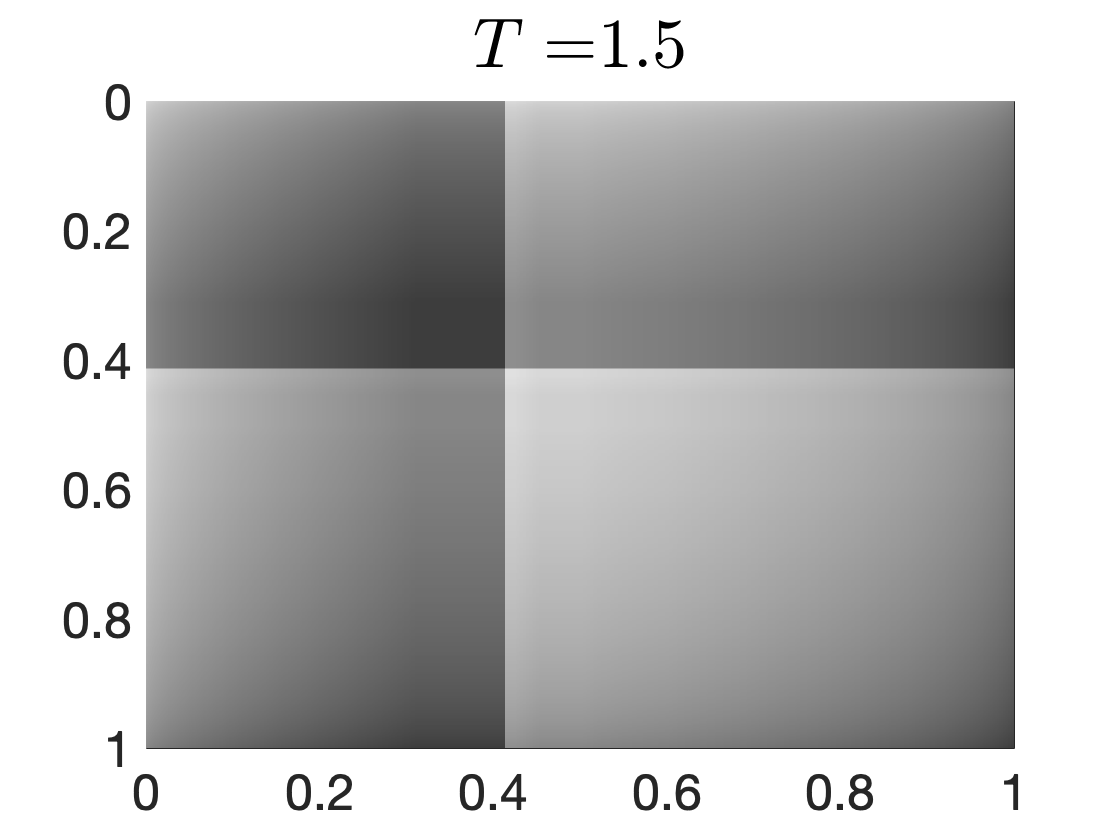}
\caption{\small The top row displays the empirical graphon when $n=100$ and $T=0.5,1,1.5$, the bottom row displays the corresponding functional law of large numbers. Simulations are based on a single run. A dot represents an edge. The labels of the vertices are updated dynamically so that they are ordered lexicographically, i.e., the vertices with opinion $+$ have lower labels than the vertices with opinion $-$, and then by increasing type.}
\label{fig-gFt}
\end{figure}
%%%%%%%%%%%%%%%%%%%%%%%%%%%%%%%%%%%%%%%%%%%%%%%%%%%%%%%%%

\subsection{Numerical examples}
\label{mod1:num}

We proceed by presenting a set of numerical examples that illustrate our model's qualitative behavior and show that our asymptotics sets in for modest values of time and graph size. In our experiments we take $p_0=0.05$ and suppose that $x_i(0)=+$ and $y_i(0)$ is uniformly distributed in [0,1] for any $i\in[n]$. We set $\gamma_{-+}=1$, $\gamma_{+-}=1.5$, $\pi_+ =0.9$, $\pi_-=0.1$. In Fig.~\ref{fig-gFt} we compare the empirical graphons obtained for $T=0.5,1,1.5$ with the corresponding functional law of large numbers (see Theorem \ref{thm:graphonconv}). Note that each block representing the limiting graphon has a low (high) density in the top left (bottom right) corner. This is a consequence of the dynamical labelling of the vertices depending on their types (see Assumption~4). Indeed, the top left corner of each block corresponds to those vertices having minimal type, which means that their opinion has been $-$, facilitating their edges to be inactive  (since $\pi_-<\pi_+$). Similarly, the bottom right corner of each block corresponds to those vertices having maximal type, which means that their opinion has been $+$, facilitating their edges to be active. Finally, in Fig.~\ref{fig:histmod1} we compare the empirical distribution of $(y_i(T))_{i\in [n]}$ when $n=6000$ and $T=700$, to its limiting distribution $\rm{Beta}(\gamma_{-+},\gamma_{+-})$, when $n,T \to \infty$, thus corroborating Theorem \ref{thm:systempdeasymp}. 

%%%%%%%%%%%%%%%%%%%%%%%%%%%%%%%%%%%%%%%%%%%%%%%%%
\begin{figure}[h!]
	\includegraphics[width=5.3cm]{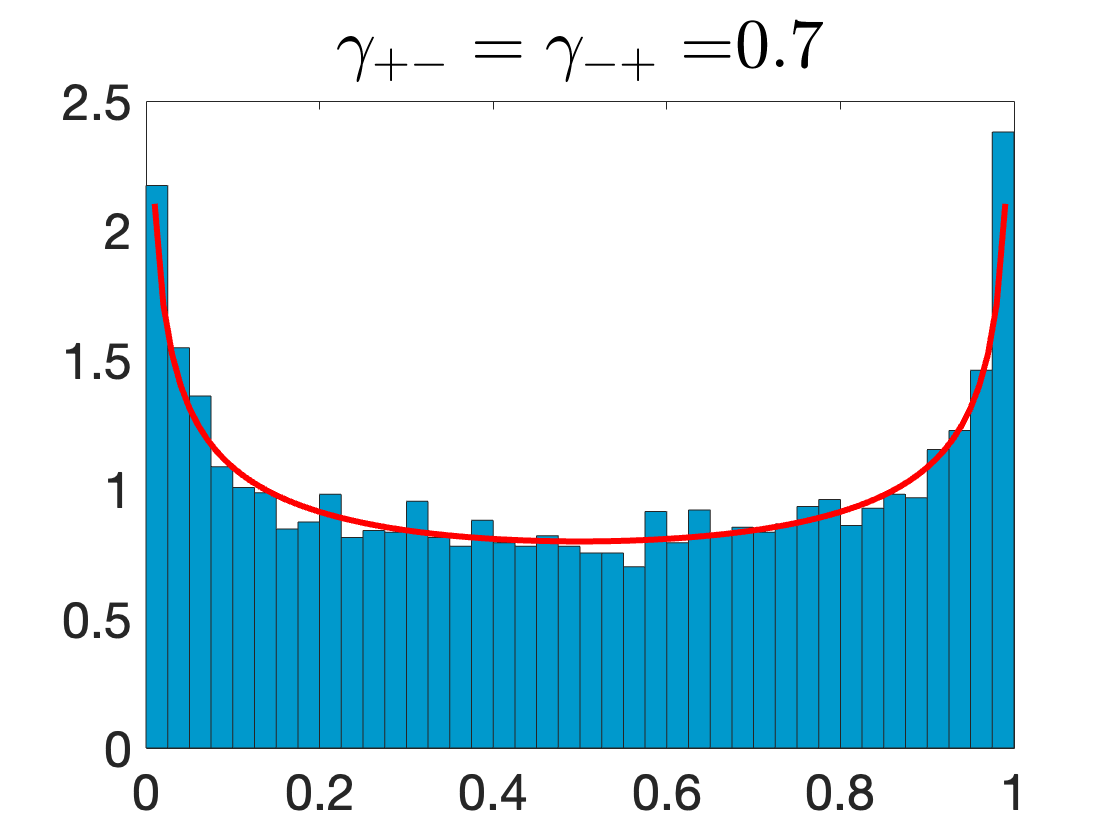}
	\includegraphics[width=5.3cm]{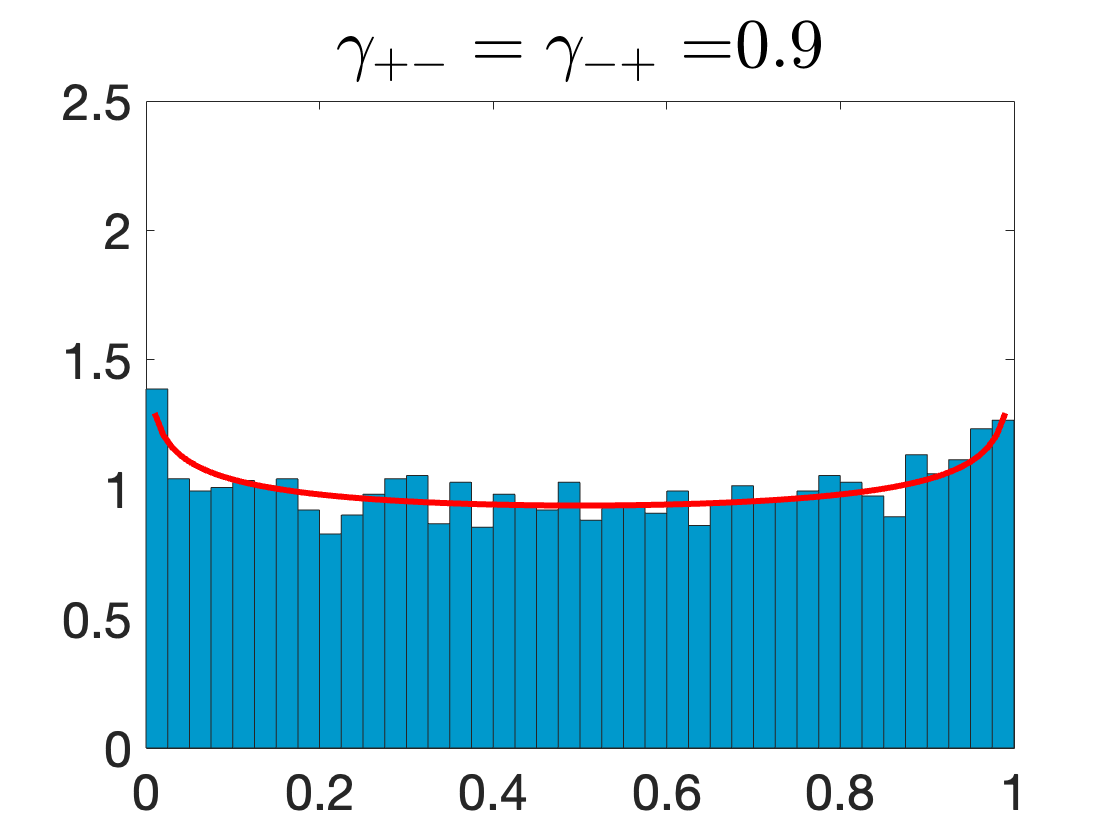}
	\includegraphics[width=5.3cm]{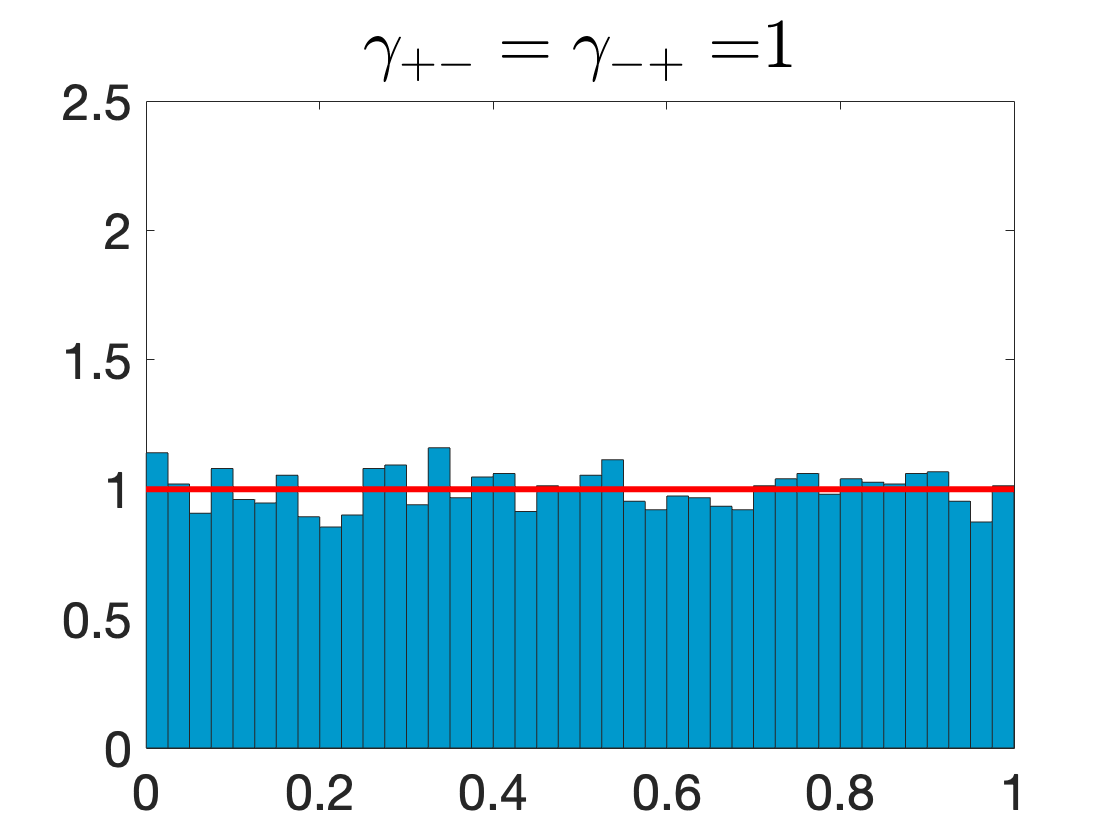}
	\includegraphics[width=5.3cm]{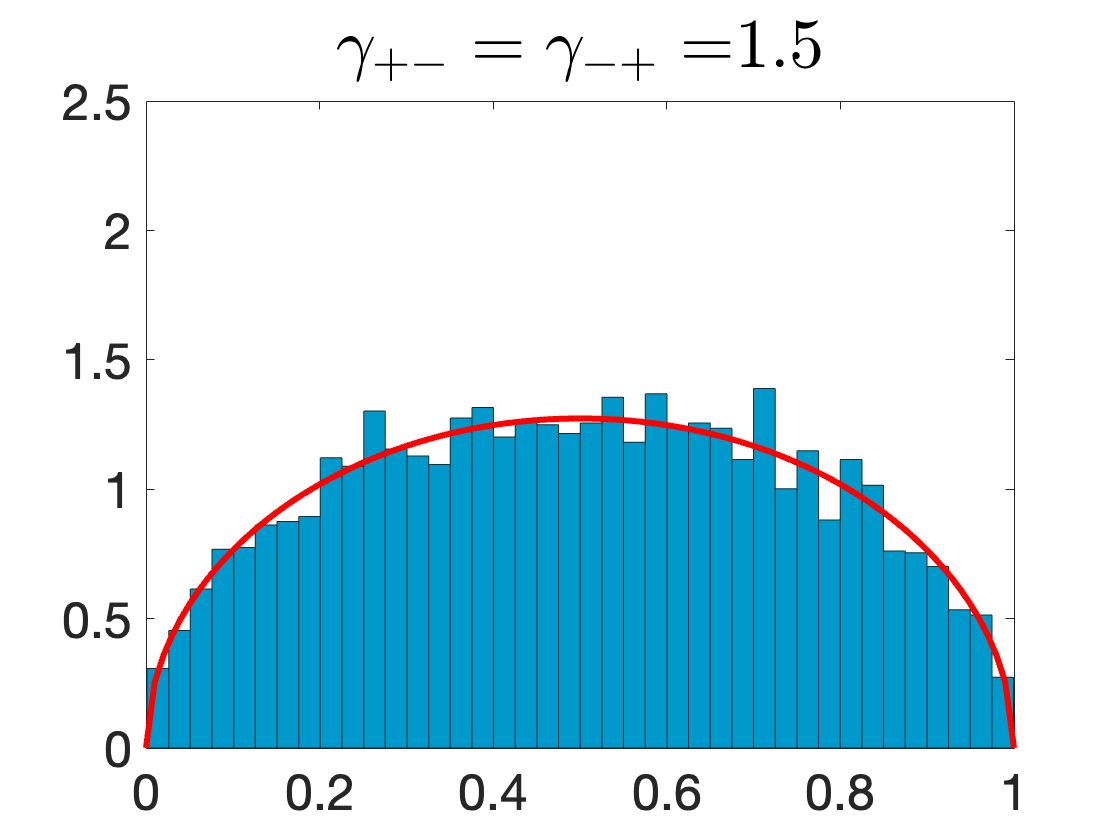}
	\includegraphics[width=5.3cm]{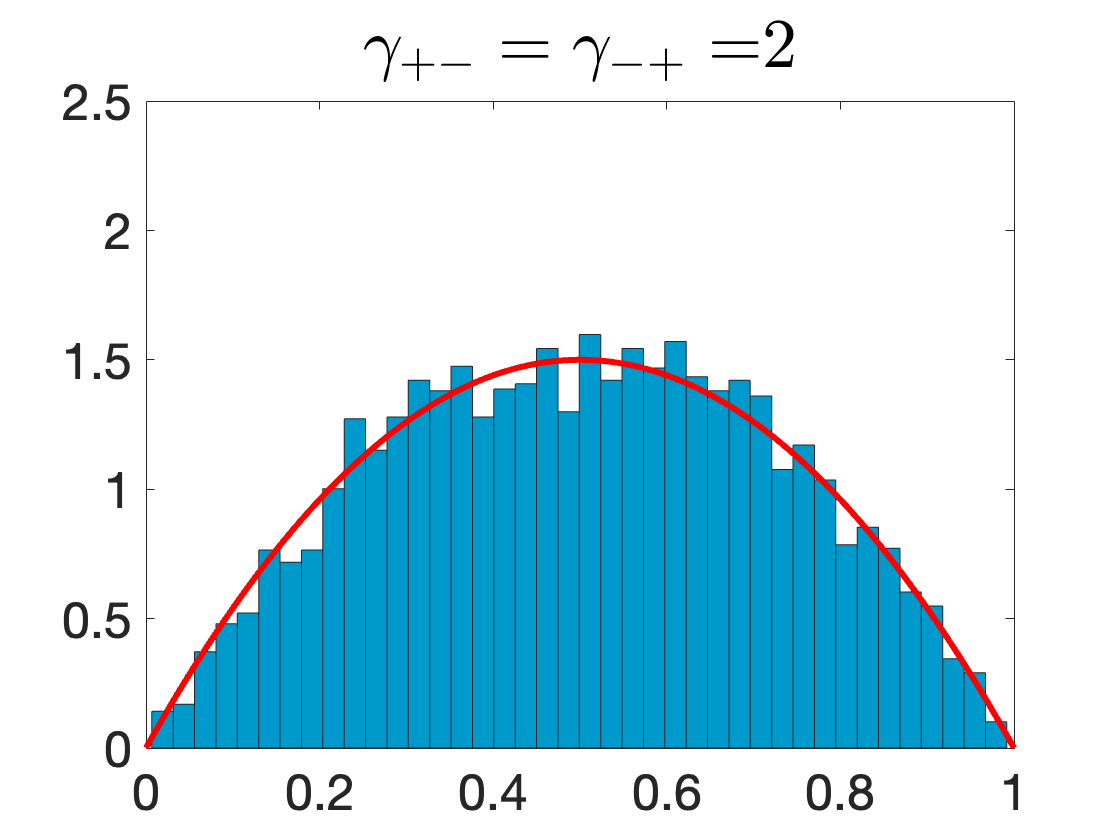}
	\includegraphics[width=5.3cm]{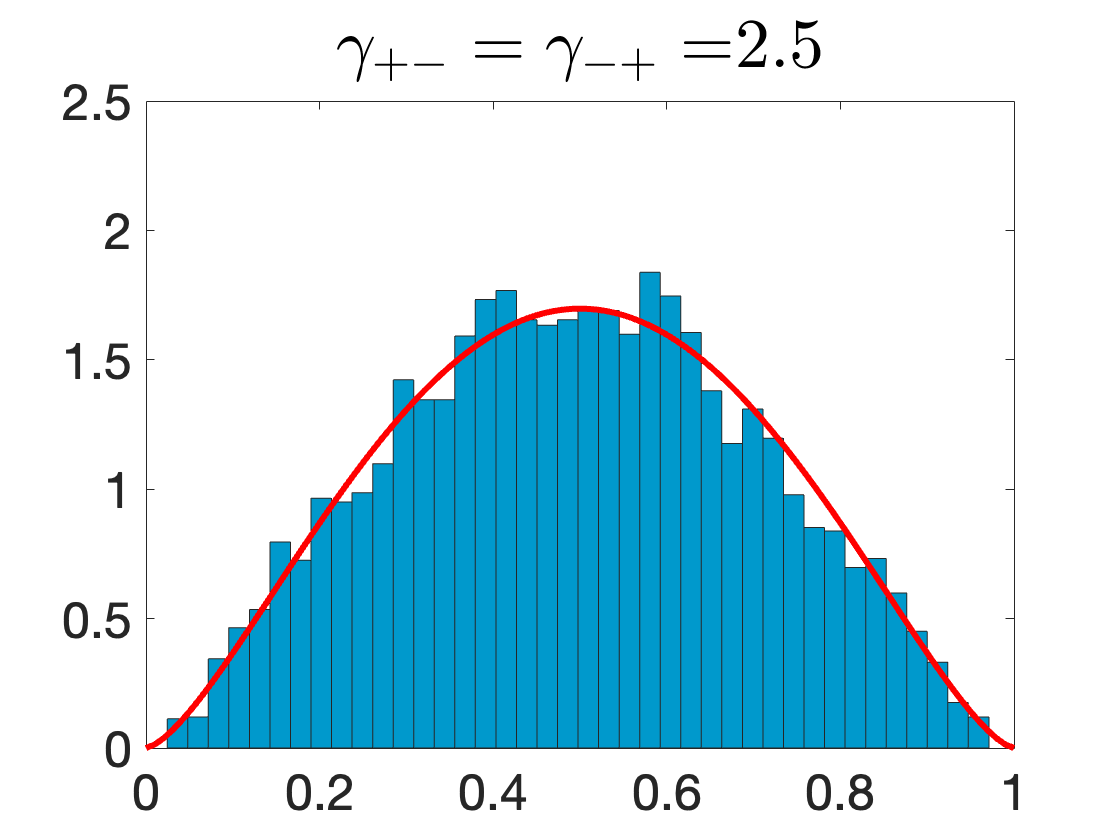}
	\includegraphics[width=5.3cm]{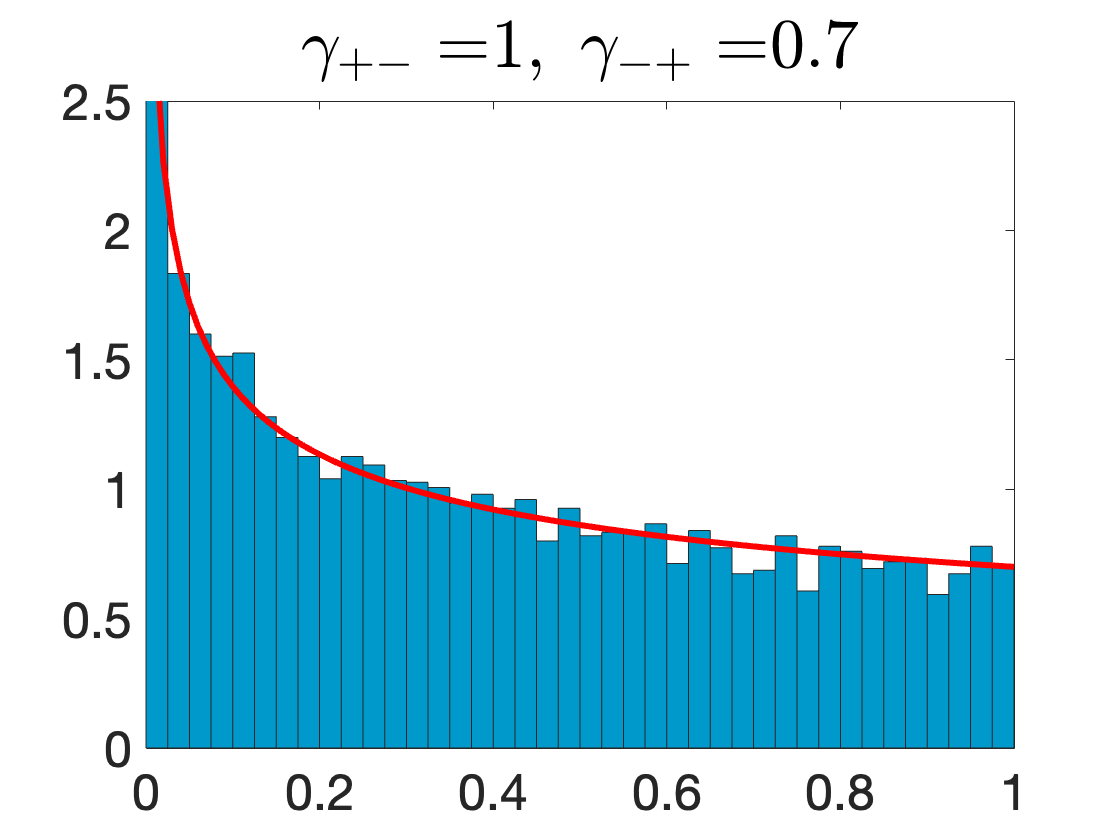}
	\includegraphics[width=5.3cm]{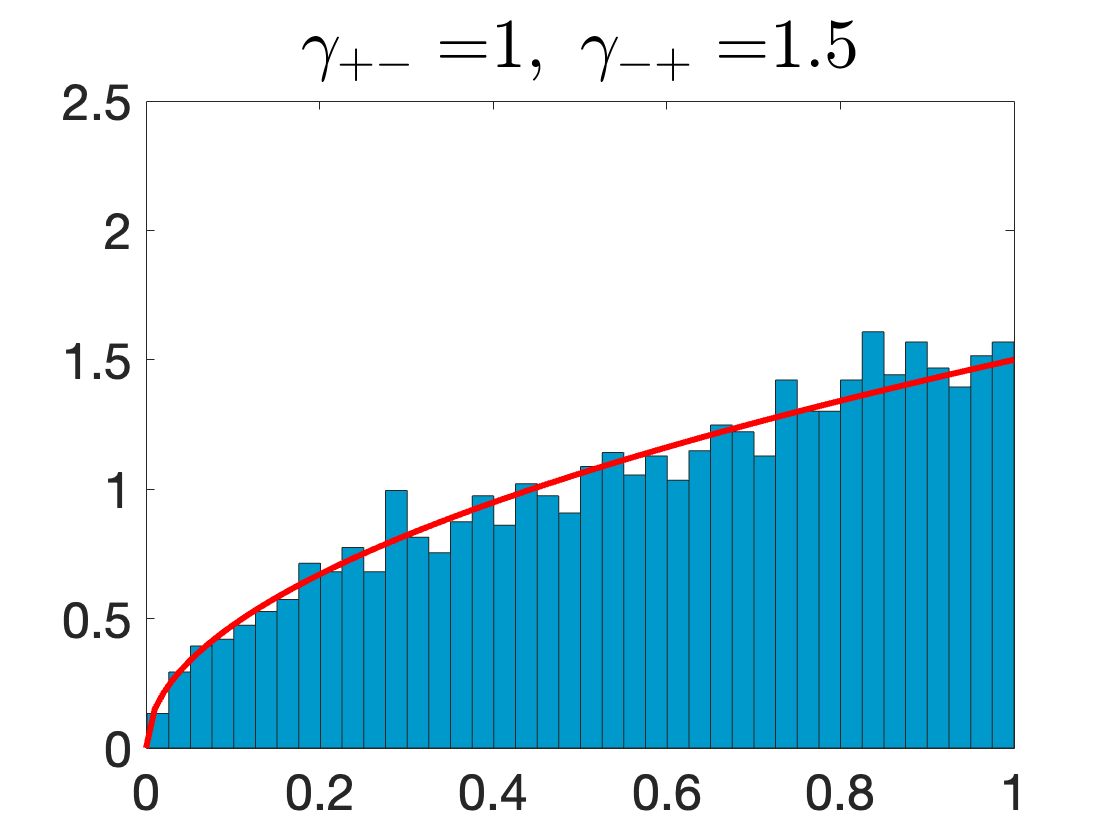}
	\includegraphics[width=5.3cm]{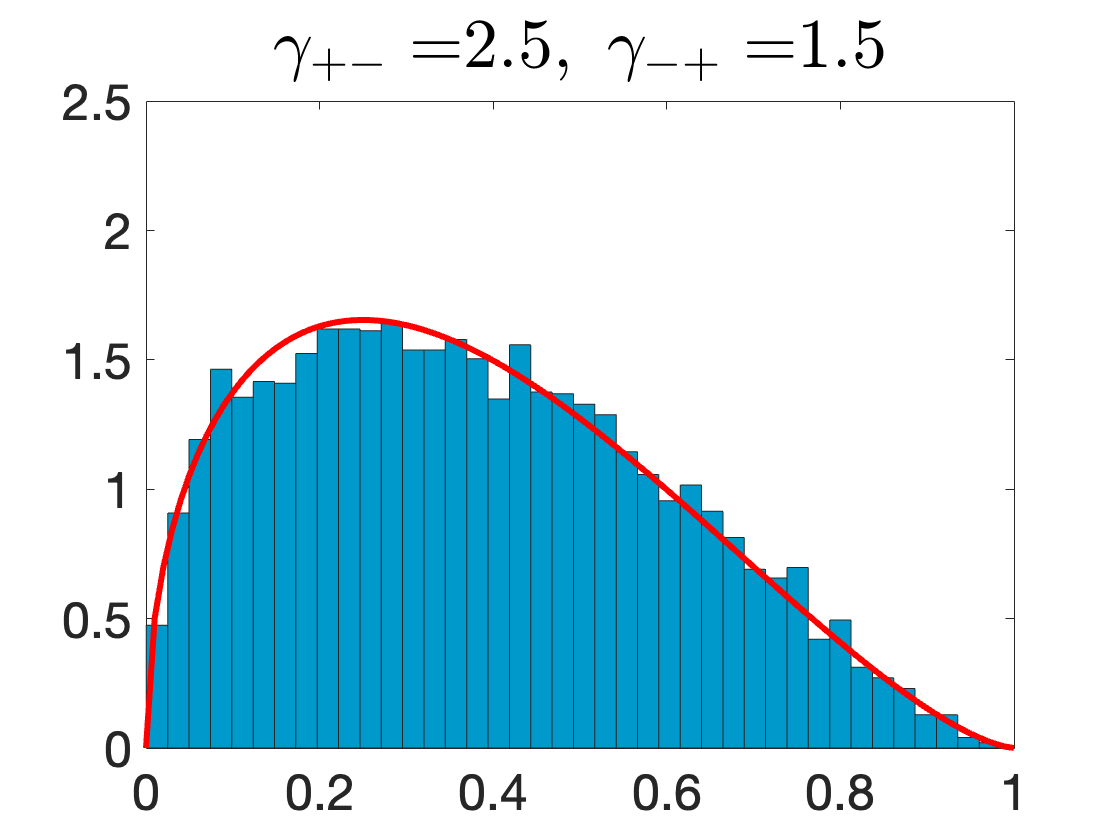}
	\caption{\small Empirical distribution of $(y_i(T))_{i\in [n]}$ when $n=6000$ and $T=700$ for varying $\gamma_{+-}$,$\gamma_{-+}$. For any $i\in[n]$, $y_i(0)$ is chosen uniformly at random in $[0,1]$, independently of the other vertices. The red line indicates the corresponding $\hbox{Beta}(\gamma_{-+},\gamma_{+-})$ density function. Simulations are based on a single run.}
	\label{fig:histmod1}
\end{figure}
%%%%%%%%%%%%%%%%%%%%%%%%%%%%%%%%%%%%%%%%%%%%%%%%%%

%%%

\subsection{Proof of the main theorems}
\label{mod1:pr}

\subsubsection{Proof of Theorem \ref{thm:graphonconv}}

The claim follows from \cite[Theorem 3.10]{BdHM22pr} once we have proven that \cite[Assumptions 3.1, 3.6--3.7, 3.9]{BdHM22pr} are in force. Since the first three assumptions hold (see Assumption 4 in Section \ref{sec:3models} and Steps 1--3 in Section \ref{mod1:prep}), it remains to verify the last assumption only, which is needed to prove that $(h^{G_n})_{n\in\mathbb{N}}$ is tight. Let
$$
E_{ij}^{(n)}(t)=\left\{
\begin{array}{ll}
1, &\hbox{if edge } ij \hbox{ is active at time } t, \\
0, &\hbox{otherwise},
\end{array}
\right.
$$
and define
$$
C_n(t,\delta)=\sum_{1 \leq i<j\leq n} \sup_{t \leq u \leq v \leq t + \delta} |E_{ij}^{(n)}(u)-E_{ij}^{(n)}(v)|,
$$
which is the number of edges that change, both from active to inactive and vice versa, at some time between $t$ and $t+\delta$. We need to verify that, for any $\varepsilon>0$,
$$
\lim_{\delta \downarrow 0} \limsup_{n\to\infty} \sup_{t\in{[0,T]}} \dfrac{T}{\delta}\,
\mathbb{P}\left(C_n(t,\delta)>\varepsilon\binom{n}{2}\right) = 0.
$$
To this end, we first note that
$$
\lim_{\delta \downarrow 0} \limsup_{n\to\infty} \sup_{t\in{[0,T]}} \dfrac{T}{\delta}\,
\mathbb{P}\left(C_n(t,\delta)>\varepsilon\binom{n}{2}\right)
\leq \lim_{\delta \downarrow 0} \limsup_{n\to\infty} \max_{i\in{\{0,\ldots,\lfloor \frac{T}{\delta}\rfloor}\}} 
\dfrac{T}{\delta}\,\mathbb{P}\left(C_n(i\delta,2\delta)>\varepsilon\binom{n}{2}\right).
$$
Indeed, the states of the edges change in the time interval $[i\delta,(i+2)\delta]$ when the rate-$1$ Poisson clock rings during that time interval, which occurs with probability $1-{\rm e}^{-2\delta}$. Thus, $C_n(i\delta,2\delta)$ is stochastically dominated by $Z_n(\delta) \sim {\rm Bin}\left(\binom{n}{2},1-{\rm e}^{-2\delta}\right)$. The claim is now an immediate consequence of the Chernoff bound for binomial random variables.

%%%%%

\subsubsection{Proof of Theorem \ref{thm:systempde}}
\label{sec:pdemodel1}

The proof is built on the following lemma, whose proof follows from straightforward computations and therefore is deferred to Appendix \ref{appB}.

\begin{lemma}
\label{lmm:PDE1}
For any $t\in[0,T]$ and $u\in[0,1]$, the densities $f_+(t,u)$ and $f_-(t,u)$ satisfy the following system of PDEs:
\begin{equation}
\label{eq:PDEs}
\begin{aligned}
\frac{\partial}{\partial t} f_+(t,u) + (1-u) \frac{\partial}{\partial u} f_+(t,u) 
&= -(\gamma_{+-}-1) f_+(t,u) + \gamma_{-+} f_-(t,u),\\
\frac{\partial}{\partial t} f_-(t,u) - u \frac{\partial}{\partial u} f_-(t,u) 
&= -(\gamma_{-+}-1) f_-(t,u) + \gamma_{+-} f_+(t,u).
\end{aligned}
\end{equation}
\end{lemma}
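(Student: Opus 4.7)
The plan is to derive the system of PDEs as the Kolmogorov forward equations for the Markov process $(x_i(t),y_i(t))$, exactly along the lines laid out in Step 1 of Section~\ref{sec:genstrategy}.

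First I would identify the drift coefficient $b(x,\cdot)$ appearing in the generator \eqref{eq:gen1}. Differentiating the explicit expression \eqref{eqn:ydef} between jumps of $x_i$, one finds
\begin{equation*}
\dot y_i(t) = -y_i(t) + \mathbf{1}\{x_i(t)=+\},
\end{equation*}
so that $b(+,u) = 1-u$ and $b(-,u) = -u$, confirming that $y_i$ is a deterministic functional of the opinion path and that the diffusion term in \eqref{eq:genu} vanishes. Substituting these drifts into \eqref{eq:gen1} yields the generator explicitly.

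Second, I would pass from the generator to the forward equations. The cleanest route is to take a smooth test function $\phi(x,u)$ and write
\begin{equation*}
\tfrac{\dd}{\dd t}\ee[\phi(X_i(t))] = \ee[(\mathcal{L}\phi)(X_i(t))]
= \textstyle\sum_{x\in\{+,-\}} \int_0^1 f_x(t,u)\,(\mathcal{L}\phi)(x,u)\,\dd u.
\end{equation*}
Integration by parts in the $u$-variable on the transport terms $(1-u)\partial_u\phi(+,u)$ and $-u\,\partial_u\phi(-,u)$ moves the derivative onto $f_{\pm}$ and produces an extra $f_+$ (from differentiating $1-u$) and an extra $f_-$ (from differentiating $-u$), which are precisely the $+1$ shifts that turn $-\gamma_{+-}f_+$ and $-\gamma_{-+}f_-$ into $-(\gamma_{+-}-1)f_+$ and $-(\gamma_{-+}-1)f_-$. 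Matching the coefficients of $\phi(+,u)$ and $\phi(-,u)$ against $\partial_t\ee[\phi] = \sum_x\int \partial_t f_x(t,u)\phi(x,u)\,\dd u$ gives the two PDEs of \eqref{eq:PDEs}. An equivalent and equally short derivation is to write
\begin{equation*}
f_+(t+\Delta t,u) = (1-\gamma_{+-}\Delta t)\,(1+\Delta t)\,f_+\bigl(t,u-(1-u)\Delta t\bigr) + \gamma_{-+}\Delta t\,f_-(t,u) + o(\Delta t),
\end{equation*}
where the factor $1+\Delta t$ is the Jacobian of the flow $u\mapsto u+(1-u)\Delta t$, and then Taylor expand and let $\Delta t\downarrow 0$ (and analogously for $f_-$).

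The only genuine care point, and what I would flag as the main obstacle, is the treatment of the boundary terms in the integration by parts at $u=0$ and $u=1$. Fortunately, the drift $1-u$ vanishes at the right endpoint for the $+$-equation and $-u$ vanishes at the left endpoint for the $-$-equation, so the boundary contributions at the characteristically ``outgoing'' endpoints are automatically zero. At the remaining endpoints the PDE is interpreted in distributional sense, which is justified by the standing analyticity assumption on $f_\pm(0,\cdot)$ propagated through the characteristics (indeed, this is precisely what is exploited in Theorem~\ref{thm:systempde} via the exponential of $N+M(u)\partial_u$). Once the boundary terms have been handled, the two PDEs of \eqref{eq:PDEs} are immediate.
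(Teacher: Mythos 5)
Your proposal is correct and follows essentially the same route as the paper: identify the drifts $b(+,u)=1-u$ and $b(-,u)=-u$ from \eqref{eqn:ydef}, then pass from the generator \eqref{eq:gen1} to the Kolmogorov forward equations. If anything, you are more explicit than the paper about where the ``$+1$'' in $-(\gamma_{+-}-1)$ and $-(\gamma_{-+}-1)$ comes from (the divergence of the drift field, equivalently the Jacobian factor $1+\Delta t$), a step the paper's appendix glosses over and in fact records with sign slips in its final display, whereas your derivation lands exactly on the form stated in \eqref{eq:PDEs}.
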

Recalling \eqref{eq.vecf} and \eqref{eq:defM}, we can rewrite \eqref{eq:PDEs} in vector form:
\begin{equation}
\label{eq:system1}
\dfrac{\partial}{\partial t} \vec{v}(t,u) = N \vec{v}(t,u) + M(u) \dfrac{\partial}{\partial u} \vec{v}(t,u)
= \left(N + M(u) \dfrac{\partial}{\partial u} \right) \vec{v}(t,u).
\end{equation}
Since the operator $N + M(u) \frac{\partial}{\partial u}$ does not depend on $t$, for any $t\in[0,T]$ and $u\in[0,1]$ the solution of the equation is
$$
\vec{v}(t,u) = \eee^{t\left(N+M(u)\frac{\partial}{\partial u}\right)} \vec{v}(0,u)
= \eee^{t\left(N+M(u)\frac{\partial}{\partial u}\right)} \left(\begin{matrix} g_+(u) \\ g_-(u)\end{matrix}\right).
$$

%%%

\subsubsection{Proof of Theorem \ref{thm:systempdeasymp}}
\label{sec:proof2.5}

We start with (i). The first part follows from the law of large numbers as $n\to\infty$ applied to the proportion of vertices holding opinion $+$ and $-$ at time $t$, respectively. In addition, as $n\to\infty$ followed by $t\to\infty$, we know that the probability that a vertex holds opinion $+$ or $-$ corresponds to the first and second entrance, respectively, of the stationary distribution of a continuous--time Markov chain on the state space $\{+,-\}$, its transition rate matrix being given by
\[
P = \left(\begin{matrix} -\gamma_{+-} & \gamma_{+-} \\ \gamma_{-+} & -\gamma_{-+} \end{matrix}\right).
\]
This concludes the proof of (i).

Next consider (ii). The claimed Beta limit follows by checking that the densities $f_+(\infty,u)$ and $f_-(\infty,u)$ given in \eqref{eq:limdensities} satisfy 
\begin{equation}
	\label{eq:mod1asymp}
	M(u)\dfrac{\partial}{\partial u}\vec{v}(\infty,u) + N \vec{v}(\infty,u) = 0.
\end{equation}
This is a straightforward computation after noting that
\[
f'_{\gamma_{-+},\gamma_{+-}}(u) = u(1-u)\left[ \dfrac{\gamma_{-+}-1}{u} - \dfrac{\gamma_{+-}-1}{1-u} \right] f_{\gamma_{-+},\gamma_{+-}}(u).
\]
It remains to show that no other analytic densities satisfy \eqref{eq:mod1asymp}. For any $u\in[0,1]$, such analytic densities take the form
$$
f_+(\infty,u) = \sum_{k=0}^\infty f_{+,k} u^k, \qquad f_-(\infty,u) = \sum_{k=0}^\infty f_{-,k} u^k.
$$
Thus, \eqref{eq:mod1asymp} can be written as
$$
\left\{
\begin{aligned}
\sum_{k=0}^\infty [-(\gamma_{+-}-1) f_{+,k} + \gamma_{-+} f_{-,k}] u^k 
&= \sum_{k=0}^\infty (k+1) f_{+,k+1}u^k - \sum_{k=1}^\infty k f_{+,k} u^k, \\
\sum_{k=0}^\infty [-(\gamma_{-+}-1) f_{-,k} + \gamma_{+-} f_{+,k}] u^k 
&= -\sum_{k=1}^\infty k f_{-,k} u^k,
\end{aligned}
\right.
$$
which provides us with recursive relations between the coefficients. Concretely, equating the coefficients pertaining to both sides of the equations yields
\begin{equation}
\label{eq:systmod1zeroas}
f_{+,1} =-(\gamma_{+-}-1) f_{+,0} + \gamma_{-+} f_{-,0}, \qquad 0 = -(\gamma_{-+}-1) f_{-,0} + \gamma_{+-} f_{+,0},
\end{equation}
and, for $k\geq1$,
\begin{equation}
\label{eq:systmod1as}
\left\{
\begin{aligned}
-(\gamma_{+-}-1) f_{+,k} + \gamma_{-+} f_{-,k} 
&= (k+1) f_{+,k+1}- k f_{+,k}, \\
-(\gamma_{-+}-1) f_{-,k} + \gamma_{+-} f_{+,k} 
&= - k f_{-,k}.
\end{aligned}
\right.
\end{equation}

By the second equation in \eqref{eq:systmod1as}, for any $k\geq1$ we can express $f_{-,k}$ in terms of $f_{+,k}$ only, and after substituting this expression into the first equation in \eqref{eq:systmod1as}, we deduce that
\[
(k+1)f_{+,k+1} = \left(k-\gamma_{+-}+1+\dfrac{\gamma_{-+}\gamma_{+-}}{\gamma_{-+}-1-k}\right) f_{+,k},
\]
which leads to
\begin{equation}\label{eq:recsolution}
f_{+,k} = \left[ \displaystyle\prod_{i=0}^{k-1} \dfrac{i-\gamma_{+-}+1+\dfrac{\gamma_{-+}\gamma_{+-}}{\gamma_{-+}-1-i}}{i+1} \right] f_{+,0}.
\end{equation}
Thus, the coefficients $f_{+,k}$ and $f_{-,k}$ are uniquely determined once $f_{+,0}$ is known. Since we already proved that the Taylor coefficients of the densities in \eqref{eq:limdensities} satisfy these recursive relations, we deduce that no other solution is allowed. Note that the value of $f_{+,0}$ is uniquely determined by the fact that $f_+(\infty,u)+f_-(\infty,u)$ is a density. This concludes the proof.

%%%

\subsection{Expression for the vector of densities}
\label{sec:nodepu}

In this section we seek a solution $\vec{v}(t,u)$ of \eqref{eq:system1} of the form
\begin{equation}\label{eq:expl}
\vec{v}(t,u) = \eee^{Nt}\vec{w}(t,u)
\end{equation}
in order to make its expression more explicit. This leads to the following PDE for $\vec{w}(t,u)$:
\begin{equation}
\label{solution1}
\dfrac{\partial}{\partial t}\vec{w}(t) = \left( \eee^{-Nt}M(u)\eee^{Nt} \right) \dfrac{\partial}{\partial u} \vec{w}(t,u).
\end{equation}
In what follows we explicitly compute the exponential matrix $\eee^{Nt}$. We can decompose the matrix $N$ as
$$
N = \mathbb{I}_2 +\bar{N},  \qquad \bar{N} = \gamma_{+-} A +\gamma_{-+} B, 
\qquad A = \left(\begin{matrix} -1 & 0 \\ 1 & 0 \end{matrix}\right), 
\qquad B = \left(\begin{matrix} 0 & 1 \\ 0 & -1\end{matrix}\right),
$$
so that we are able to find the explicit PDE \eqref{solution1}, as well as to provide a more explicit solution of the form \eqref{eq:expl}. Let $T\colon\,\mathbb{R}^{2\times2} \to \mathbb{R}^{2\times2}$ be the operator that switches the two rows of a matrix. It is trivial to check by induction that, for $k \in \mathbb{N}$, 
\begin{equation}
\label{Apower}
A^k = \left\{
\begin{array}{ll}
TA, &\hbox{if } k \hbox{ is even}, \\
A, &\hbox{if } k \hbox{ is odd},
\end{array}
\right.
\end{equation}
and
\begin{equation}
\label{Bpower}
B^k = \left\{
\begin{array}{ll}
TB, &\hbox{if } k \hbox{ is even}, \\
B, &\hbox{if } k \hbox{ is odd}.
\end{array}
\right.
\end{equation}
The computation of the exponential matrix is built on the following lemmas, whose proofs follow from straightforward computations and are therefore deferred to Appendix \ref{appB}.

\begin{lemma}
\label{Npower}
For any $k\in{\mathbb N}$,
$$
\bar{N}^k=(\gamma_{+-} + \gamma_{-+})^{k-1}(\gamma_{+-} A^k + \gamma_{-+} B^k).
$$
\end{lemma}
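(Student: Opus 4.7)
The plan is a straightforward induction on $k\geq 1$, with the algebraic content concentrated in two elementary identities about the matrices $A$ and $B$. The base case $k=1$ is immediate, since $\bar N = \gamma_{+-}A+\gamma_{-+}B = (\gamma_{+-}+\gamma_{-+})^{0}(\gamma_{+-}A^1+\gamma_{-+}B^1)$.

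For the inductive step, suppose the formula holds for some $k\geq 1$. Writing $\bar N^{k+1}=\bar N\cdot\bar N^k$ and expanding using the inductive hypothesis gives
\begin{equation*}
\bar N^{k+1} = (\gamma_{+-}+\gamma_{-+})^{k-1}\bigl[\gamma_{+-}^2 A^{k+1}+\gamma_{-+}^2 B^{k+1}+\gamma_{+-}\gamma_{-+}(AB^k+BA^k)\bigr].
\end{equation*}
On the other hand, the target expression at level $k+1$ expands as
\begin{equation*}
(\gamma_{+-}+\gamma_{-+})^{k}(\gamma_{+-}A^{k+1}+\gamma_{-+}B^{k+1}) = (\gamma_{+-}+\gamma_{-+})^{k-1}\bigl[\gamma_{+-}^2 A^{k+1}+\gamma_{-+}^2 B^{k+1}+\gamma_{+-}\gamma_{-+}(A^{k+1}+B^{k+1})\bigr].
\end{equation*}
Matching the two, the whole lemma reduces to verifying the single matrix identity
\begin{equation*}
AB^k+BA^k = A^{k+1}+B^{k+1}.
\end{equation*}

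To establish this identity I would first compute directly from the definitions of $A$ and $B$ the two relations $AB=-B$ and $BA=-A$, and note that they imply $A^2=-A$ and $B^2=-B$, which is exactly the content of \eqref{Apower}--\eqref{Bpower} rewritten as $TA=-A$ and $TB=-B$. Splitting into the parity cases then finishes the argument: if $k$ is odd then $A^k=A$, $B^k=B$, $A^{k+1}=-A$, $B^{k+1}=-B$, so both sides equal $-A-B$; if $k$ is even then $A^k=-A$, $B^k=-B$, $A^{k+1}=A$, $B^{k+1}=B$, and a short computation using $AB=-B$, $BA=-A$ shows both sides equal $A+B$. This closes the induction.

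There is no real obstacle here; the only thing to watch is the bookkeeping of parities in the final identity, and the observation that \eqref{Apower}--\eqref{Bpower} are equivalent to the clean relations $A^2=-A$ and $B^2=-B$, which is what makes the cross-terms $AB^k+BA^k$ collapse to $A^{k+1}+B^{k+1}$.
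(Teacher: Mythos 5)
Your proof is correct and follows essentially the same route as the paper: an induction on $k$ in which the cross terms $\gamma_{+-}\gamma_{-+}(AB^k+BA^k)$ are collapsed to $\gamma_{+-}\gamma_{-+}(A^{k+1}+B^{k+1})$ using the elementary algebra of $A$ and $B$ (the paper phrases this via $AB^2=B$, $BA^2=A$ together with \eqref{Apower}--\eqref{Bpower}, while you phrase it via $AB=-B$, $BA=-A$ and a parity split, which is the same computation). All the identities you invoke check out, including the observation that $A^2=-A$ and $B^2=-B$ indeed follow from $AB=-B$ and $BA=-A$.
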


\begin{lemma}
\label{Nexp}
For $t \geq 0$,
$$
{\rm e}^{\bar{N}t} = \dfrac{1}{\gamma_{+-} + \gamma_{-+}}\left(\gamma_{+-}{\rm e}^{t(\gamma_{+-} + \gamma_{-+})A}
+ \gamma_{-+} {\rm e}^{t(\gamma_{+-} + \gamma_{-+})B}\right).
$$
\end{lemma}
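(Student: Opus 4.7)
The plan is to expand the matrix exponential as a power series and apply Lemma~\ref{Npower} termwise; since that lemma already factors $\bar{N}^k$ cleanly into a piece in $A^k$ and a piece in $B^k$, the sum should split into two independent exponential series, one in $A$ and one in $B$.

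Writing $c := \gamma_{+-}+\gamma_{-+}$ for brevity, my first step would be to isolate the $k=0$ contribution and write
$$
\eee^{\bar{N}t} = I + \sum_{k=1}^{\infty} \frac{t^k}{k!}\,\bar{N}^k.
$$
Next I would substitute the identity from Lemma~\ref{Npower} into this series and regroup the powers via $t^k c^{k-1} = c^{-1}(ct)^k$ so as to expose a factor $(ct)^k/k!$ next to each of $A^k$ and $B^k$. Pulling the scalar coefficients $\gamma_{+-}/c$ and $\gamma_{-+}/c$ outside the sums, the two remaining series are immediately recognisable as $\eee^{ctA}-I$ and $\eee^{ctB}-I$, respectively.

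To finish, I would collect the identity contributions and observe that the coefficient of $I$ equals $1-(\gamma_{+-}+\gamma_{-+})/c = 0$, so those terms cancel and the claimed formula
$$
\eee^{\bar{N}t} = \frac{\gamma_{+-}}{c}\,\eee^{ct A} + \frac{\gamma_{-+}}{c}\,\eee^{ct B}
$$
drops out.

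I do not anticipate any serious obstacle. The mildly delicate points are (i) the bookkeeping around the $k=0$ term, since Lemma~\ref{Npower} formally involves a factor $c^{-1}$ at $k=0$ while $\bar{N}^0=I$ (the formula happens to still hold by telescoping $\gamma_{+-}+\gamma_{-+}=c$, but it is cleaner to separate $k=0$ from the outset); and (ii) the interchange of summation needed to treat the $A$- and $B$-pieces separately, which is automatic because $A$, $B$ and $\bar{N}$ are fixed finite matrices and the matrix exponential series converges absolutely for every $t\geq 0$.
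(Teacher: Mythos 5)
Your argument is correct and coincides with the paper's proof: both expand $\eee^{\bar{N}t}$ as a power series, insert Lemma~\ref{Npower} for $k\geq 1$, regroup via $t^k(\gamma_{+-}+\gamma_{-+})^{k-1}=(\gamma_{+-}+\gamma_{-+})^{-1}\bigl(t(\gamma_{+-}+\gamma_{-+})\bigr)^k$ to recognise $\eee^{t(\gamma_{+-}+\gamma_{-+})A}-\mathbb{I}_2$ and $\eee^{t(\gamma_{+-}+\gamma_{-+})B}-\mathbb{I}_2$, and observe that the identity terms cancel since $\gamma_{+-}+\gamma_{-+}$ equals the normalising constant. No issues.
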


\begin{lemma}
\label{expansion}
For $t \geq 0$,
$$
{\rm e}^{t(\gamma_{+-}+\gamma_{-+})A}
= \left(\begin{matrix} {\rm e}^{-t(\gamma_{+-}+\gamma_{-+})} 
& 0 \\ 1- {\rm e}^{-t(\gamma_{+-} + \gamma_{-+})} 
& 1\end{matrix}\right),
\qquad
{\rm e}^{t(\gamma_{+-} + \gamma_{-+})B} = \left(\begin{matrix} 1 & 1-{\rm e}^{-t(\gamma_{+-} + \gamma_{-+})} \\ 0 
& {\rm e}^{-t(\gamma_{+-} + \gamma_{-+})} \end{matrix}\right).
$$
\end{lemma}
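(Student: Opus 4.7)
The plan is to compute each matrix exponential directly from its power series, leveraging the parity structure of powers of $A$ and $B$ stated in \eqref{Apower}--\eqref{Bpower}. Writing $c = \gamma_{+-} + \gamma_{-+}$ for brevity, I would begin from
$$
{\rm e}^{tcA} = \mathbb{I}_2 + \sum_{k=1}^{\infty} \frac{(tc)^k}{k!}\,A^k
$$
and split the sum according to the parity of $k$. By \eqref{Apower}, the odd-indexed tail equals $A\sum_{k \geq 1,\,k\text{ odd}} \frac{(tc)^k}{k!} = \sinh(tc)\,A$, while the even-indexed tail (with $k \geq 2$) equals $TA\sum_{k \geq 2,\,k\text{ even}} \frac{(tc)^k}{k!} = (\cosh(tc)-1)\,TA$.

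The algebraic simplification then comes from observing that for $A = \left(\begin{smallmatrix} -1 & 0 \\ 1 & 0 \end{smallmatrix}\right)$, swapping the two rows gives $TA = -A$. Substituting this and using $\sinh(x) - \cosh(x) = -{\rm e}^{-x}$ yields
$$
{\rm e}^{tcA} = \mathbb{I}_2 + \bigl[\sinh(tc) - (\cosh(tc)-1)\bigr]\,A = \mathbb{I}_2 + (1 - {\rm e}^{-tc})\,A.
$$
Writing the right-hand side entry-wise using the explicit form of $A$ immediately delivers the first matrix in the lemma. Applying the same argument verbatim to $B = \left(\begin{smallmatrix} 0 & 1 \\ 0 & -1 \end{smallmatrix}\right)$, for which likewise $TB = -B$, produces the second matrix.

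The main (and indeed only) obstacle is bookkeeping: keeping track of the parity split in the power series and recognising the identity $\sinh(tc) - \cosh(tc) = -{\rm e}^{-tc}$ that delivers the exponential decay appearing in the target expressions. The underlying reason the computation closes so cleanly is that both $A$ and $B$ satisfy the minimal polynomial $X^2 + X = 0$, which already forces ${\rm e}^{tcA}$ and ${\rm e}^{tcB}$ to be affine combinations of $\mathbb{I}_2$ and the corresponding matrix; this observation also yields a shorter alternative route via the ansatz ${\rm e}^{tcA} = \mathbb{I}_2 + \varphi(t)\,A$, reducing the problem to a scalar ODE for $\varphi$ with $\varphi(0)=0$.
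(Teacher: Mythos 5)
Your proposal is correct and follows essentially the same route as the paper, which proves the lemma by splitting the exponential series into even and odd terms and invoking \eqref{Apower}--\eqref{Bpower}; you simply carry out the bookkeeping explicitly (via $TA=-A$, $TB=-B$ and $\sinh(tc)-\cosh(tc)=-{\rm e}^{-tc}$) that the paper leaves to the reader.
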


From Lemmas \ref{Nexp}--\ref{expansion} we deduce that 
$$
\begin{array}{ll}
\eee^{Nt} = \eee^{\mathbb{I}_2t}\eee^{\bar{N}t} 
&= \dfrac{1}{\gamma_{+-} + \gamma_{-+}} \eee^t\mathbb{I}_2\left(\gamma_{+-}{\rm e}^{t(\gamma_{+-} + \gamma_{-+})A}
+ \gamma_{-+}{\rm e}^{t(\gamma_{+-} + \gamma_{-+})B}\right) \\ [0.5cm]
&= \eee^{t}\left(\begin{matrix} \dfrac{\gamma_{+-}{\rm e}^{-t(\gamma_{+-}+\gamma_{-+})}+\gamma_{-+}}{\gamma_{+-}+\gamma_{-+}} 
&\dfrac{\gamma_{-+}(1- {\rm e}^{-t(\gamma_{+-} + \gamma_{-+})})}{\gamma_{+-}+\gamma_{-+}} \\ 
\dfrac{\gamma_{+-}(1- {\rm e}^{-t(\gamma_{+-} + \gamma_{-+})})}{\gamma_{+-}+\gamma_{-+}} 
&\dfrac{\gamma_{-+}{\rm e}^{-t(\gamma_{+-} + \gamma_{-+})} + \gamma_{+-}}{\gamma_{+-} + \gamma_{-+}}\end{matrix}\right).
\end{array}
$$

%%%%%%%%% SECTION 3 %%%%%%%%%%%%%%%%%%%%%%%%%%%%%%%%%

\section{Second model: two-way feedback and consensus}
\label{sec:twoway1}

In the second model, treated in this section, the dynamics of the edges is the same as in the first model, but the dynamics of the vertices is different. The model can be described as follows, where for completeness we repeat the description of the dynamics of the edges:
\begin{itemize}
\item 
{\it Vertex dynamics.} Each vertex holds opinion $+$ or $-$, and is assigned an independent rate-$\beta$ Poisson clock. Each time the clock rings the vertex selects one of its neighbours uniformly at random and copies the opinion of that vertex.
\item 
{\it Edge dynamics.} Each edge is re-sampled at rate $1$, i.e., a rate-$1$ Poisson clock is attached to each edge and when the clock rings the edge is active with a probability that depends on the current opinion of the two connected vertices: with probability $\pi_+$ if the two vertices hold opinion $+$, with probability $\pi_-$ if the two vertices hold opinion $-$, and otherwise with probability $\tfrac12(\pi_++\pi_-)$.
\end{itemize}
Note that this network is co-evolutionary, in that the opinions of the adjacent vertices affect the probability that an edge is active (through $\pi_+ $ and $\pi_-$), and the state of adjacent edges affect the opinions of the vertices (because the vertices copy the opinions of their neighbours).

The structure of this section follows the one of Section \ref{sec:oneway}: Section~\ref{mod2:prep} steps through the framework in Section \ref{sec:genstrategy}, Section~\ref{mod2:mr} states the main results, Section~\ref{mod2:num} offers simulations, Section~\ref{mod2:pr} provides proofs and Section~\ref{sec:nonlin} defines a generalisation of the model defined above that shows polarisation.

%%%

\subsection{Preparations}
\label{mod2:prep}

To help set up the notation and describe Steps 1--4 introduced in Section \ref{sec:genstrategy}, consider a related model with the same edge dynamics as the co-evolutionary model, but with a \emph{different vertex dynamics} that we refer to as the \emph{mimicking process} $(G_n^*(t))_{t\in[0,T]}$. For this process, we define the generalised type $X_i^*(t)$ of vertex $i$ at time t as in Section \ref{sec:3models} for the original model, but now referring to as $X_i^*(t) = (x_i^*(t), y_i^*(t))$. In the mimicking process we suppose that each vertex is assigned an independent rate-$\beta$ Poisson clock, and when the clock rings the vertex chooses opinion + with probability  
\begin{equation}
\label{eq:alpha}
\alpha(t;u,\vec{v}(t,\cdot)) = \frac{\int_0^1 {\rm d}y\,f_+(t,y) H(t;y,u)}{\int_0^1 {\rm d}y\, [f_+(t,y) + f_-(t,y)]\,H(t;y,u)},
\end{equation}
and opinion $-$ with probability $1-\alpha(t;u,\vec{v}(t,\cdot))$, where $u$ is the type of the vertex defined in \eqref{eqn:typedef}, $\vec{v}(t,u) = (f_+(t,u),f_-(t,u))^\top$ (as in \eqref{eq.vecf}), $H$ is defined in \eqref{eqn:Hdef}, and $f_+(t,\cdot)$ and $f_-(t,\cdot)$ are the (unique) solutions of the system \eqref{eq:PDEsalt} with values $f_+(0,\cdot)$ and $f_-(0,\cdot)$ at time zero. In what follows we simply refer to $\alpha(t;u,\vec{v}(t,\cdot))$ as $\alpha(t;u,\vec{v})$ to lighten the notation. 

Observe that the mimicking process has \emph{one-way-dependence} and vertex types are independent and identically distributed. Consequently, we can apply Steps 1--3 for the mimicking process using similar arguments as in Section \ref{sec:oneway} (and later we have to establish that our actual process and its mimicking counterpart are sufficiently close). 

\medskip \noindent
\emph{\underline{Step 1:}} We characterise $F$ by writing down the generator of the process $(x^*_i(t),y^*_i(t))_{t \in [0,T]}$ (i.e., the counterpart of \eqref{eq:gen1}). It takes the form
\begin{equation}
\label{eq:gen2}
	({\cal L}_t f)(x,y) = \beta((1-\alpha(t;y,\vec{v}))\mathbf{1}\{x=+\}+\alpha(t;y,\vec{v})\mathbf{1}\{x=-\})[f(x',y)-f(x,y)]
+ b(x,y) \dfrac{\partial}{\partial y} f(x,y),
\end{equation}
where $x'$ is the opinion of the selected vertex after switching its opinion $x$ at time $t$ and $b(x,\cdot)$ is the drift term when starting from $x\in\{-,+\}$ and has the same shape as for model 1 (see \eqref{eq:b+}-\eqref{eq:b-} below).
This then gives the corresponding Kolmogorov forward equations (i.e., the counterpart of \eqref{eq:PDEs}) 
\begin{equation}
\label{eq:PDEsalt}
\begin{aligned}
\frac{\partial}{\partial t} f_+(t,u) + (1-u) \frac{\partial}{\partial u} f_+(t,u)  
&= \beta f_-(t,u)\,\alpha(t;u,\vec{v}) -(\beta(1-\alpha(t;u,\vec{v}))-1) f_+(t,u),\\
\frac{\partial}{\partial t} f_-(t,u) - u \frac{\partial}{\partial u} f_-(t,u) 
&= \beta f_+(t,u)\,(1-\alpha(t;u,\vec{v})) - (\beta \alpha(t;u,\vec{v})-1)f_-(t,u).
\end{aligned}
\end{equation}
As before, these differential equations characterise $F$ and can be used to verify Assumption 1. 

\medskip \noindent
\emph{\underline{Steps 2 and 3:}} Because the edge dynamics of the model is equivalent to that of the model in Section \ref{sec:oneway}, these steps are the same with $H$ defined in \eqref{eqn:Hdef}.

\medskip \noindent
\emph{\underline{Step 4:}} We need to show that the dynamics of the mimicking process and the co-evolutionary model converge as $n \to \infty$. While the arguments required are technically involved, in essence the reasoning boils down to noting that if $h^{G^*_n} \Rightarrow g^{[F]}$, then the numerator of $\alpha(t;u,\vec{v})$ can be interpreted as the number of vertices having opinion $+$ connected to a vertex with type $v\in [u-{\rm d} u,u+{\rm d}u]$ divided by $n$, and the denominator can be interpreted as the total number of vertices connected to a vertex with type $v\in [u-{\rm d} u,u+{\rm d}u]$ divided by $n$. The probability that the vertex takes opinion $+$ when its clock rings is then asymptotically equivalent to the probability that it takes opinion $+$ by copying one of its neighbours (i.e., the same vertex dynamics as in the co-evolutionary model). This is possible because we consider dense graph-valued processes, meaning that each vertex is adjacent to order $n$ active edges, so that, as $n \to \infty$, a law of large numbers holds at each vertex.

%%%

\subsection{Main results: Theorems~\ref{thm:graphonconvalt}--\ref{thm:systempdeasymp2}}
\label{mod2:mr}

The analogues of Theorems~\ref{thm:graphonconv}--\ref{thm:systempdeasymp} read as follows.

\begin{theorem}
\label{thm:graphonconvalt}
$h^{G_n} \Rightarrow g^{[F]}$ as $n\to\infty$ in the space $D((\mathcal{W},d_{\square}),[0,T])$, where $h^{G_n}$ is the empirical graphon associated with $G_n$ defined in \eqref{eq:graphon}.
\end{theorem}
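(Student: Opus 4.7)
The plan is to execute Step 4 of the general framework of Section~\ref{sec:genstrategy}. Concretely, I would first verify that the \emph{mimicking process} $(G^*_n(t))_{t\in[0,T]}$ introduced in Section~\ref{mod2:prep} satisfies $h^{G^*_n} \Rightarrow g^{[F]}$, and then couple the co-evolutionary process $(G_n(t))_{t\in[0,T]}$ with $(G^*_n(t))_{t\in[0,T]}$ in such a way that the discrepancies over the finite horizon $[0,T]$ are negligible as $n\to\infty$. Combined with the triangle inequality in the metric $d_{\square}$, this yields the claim.

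For the mimicking process the generalised types $(X_i^*(t))_{i\in[n]}$ are i.i.d.\ time-inhomogeneous Markov processes whose generator is given by \eqref{eq:gen2}, and the edge dynamics coincides with the one of Section~\ref{sec:oneway}. Consequently, Assumption~1 follows from \eqref{eq:PDEsalt} together with a standard tightness argument for empirical measures of i.i.d.\ Markov paths; Assumption~2 is identical to the corresponding step in the proof of Theorem~\ref{thm:graphonconv}, because the connection function $H$ in \eqref{eqn:Hdef} is unchanged; and Assumption~3 reduces to the already-noted continuity of $H$. Applying \cite[Theorem 3.10]{BdHM22pr} exactly as in Section~\ref{mod1:pr} yields $h^{G^*_n}\Rightarrow g^{[F]}$, and the tightness verification of Section~\ref{mod1:pr} carries over verbatim since the edge clocks are the same.

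The core of the proof is the coupling between $(G_n(t))$ and $(G^*_n(t))$. A natural construction uses the same rate-$\beta$ vertex clocks, the same rate-$1$ edge clocks, and a shared sequence of i.i.d.\ uniform $[0,1]$ marks: when the clock of vertex $i$ rings in the co-evolutionary process, $i$ copies the opinion of a uniformly chosen neighbour (selected via the marks), while in the mimicking process $i$ adopts opinion $+$ if its mark falls below $\alpha(t;y_i(t),\vec{v}(t,\cdot))$. Because each vertex is adjacent to order $n$ edges, Hoeffding's inequality (Appendix~\ref{appA}) shows that, with probability tending to one, the empirical fraction of $+$-neighbours of $i$ in $G_n(t)$ differs from $\alpha(t;y_i(t),\vec{v}(t,\cdot))$ by at most $n^{-1/2}\log n$, uniformly in $i\in[n]$ and $t\in[0,T]$, \emph{provided} the denominator in \eqref{eq:alpha} stays bounded away from zero. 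Securing that lower bound throughout $[0,T]$ is the principal obstacle; I would extract it from the analyticity of the initial densities together with an a-priori estimate on $f_+(t,\cdot)+f_-(t,\cdot)$ coming from \eqref{eq:PDEsalt}.

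Once the neighbourhood estimate is in place, I would bound the number of vertices whose opinion trajectories in the two processes differ up to time $T$ by a Gr\"onwall-type recursion on the expected mismatch count, using the fact that each opinion flip creates at most one new mismatch whose probability, conditionally on the past, is controlled by the Hoeffding bound. This shows that the expected number of mismatches up to time $T$ is $o(n)$. Since at most $o(n)$ mismatches among vertex opinions translate into $o(n^2)$ discrepancies among edge states, hence $o(1)$ discrepancy in the cut norm between $h^{G_n}(t)$ and $h^{G^*_n}(t)$ uniformly on $[0,T]$, the triangle inequality together with $h^{G^*_n}\Rightarrow g^{[F]}$ yields the desired convergence $h^{G_n}\Rightarrow g^{[F]}$ in $D((\mathcal{W},d_{\square}),[0,T])$.
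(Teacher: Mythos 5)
Your overall architecture coincides with the paper's: prove convergence for the one-way-dependent mimicking process via \cite[Theorem 3.10]{BdHM22pr} (the paper's Lemma \ref{lem:mimproc}), couple it to the co-evolutionary process using shared Poisson clocks and shared uniform marks (Step I of the paper's Lemma \ref{lem:coupling}), and close the argument with a discrete Gr\"onwall-type recursion on the mismatch counts (the paper's Lemma \ref{lmm:induction}). The denominator issue you flag is real and is handled in the paper by the set $\mathcal{N}_\ell$ of \eqref{eq:setN} and Lemma \ref{lem:setN}, obtained probabilistically by iterated Hoeffding bounds rather than from a priori PDE estimates.

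The one step that does not survive scrutiny as written is the claim that Hoeffding's inequality shows the fraction $N_i^+(t)/N_i(t)$ of $+$-neighbours of $i$ \emph{in the co-evolutionary graph} $G_n(t)$ lies within $n^{-1/2}\log n$ of $\alpha(t;y_i(t),\vec{v})$. In $G_n(t)$ the indicators ``edge $ij$ is active and $x_j(t)=+$'' are not conditionally independent across $j$ given the type of $i$: the neighbours' opinions were themselves formed by copying along opinion-dependent edges, so this independence is precisely what the two-way feedback destroys, and asserting the concentration directly is circular --- it presupposes the mean-field behaviour the theorem is meant to establish. The paper avoids this by inserting the mimicking quantity $N_i^{*+}(t)/N_i^*(t)$ as an intermediate term: the difference $|N_i^{*+}/N_i^* - \alpha|$ is controlled by the graphon convergence of the mimicking process (via the representation of $\alpha$ in Lemma \ref{lmm:repalpha}), while $|N_i^+/N_i - N_i^{*+}/N_i^*|$ is bounded in terms of the accumulated coupling discrepancies $d_E(t)+N_E$ divided by $\ell n^2$. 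Only with this two-term decomposition does the Gr\"onwall recursion you propose actually close; the recursion is the right mechanism, but it must be fed by the coupling discrepancies and the mimicking-process law of large numbers, not by a direct Hoeffding bound on $G_n(t)$.
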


Recall the definition of the vector of densities $\vec{v}(t,u)$ given in \eqref{eq.vecf}. The next theorem claims the (local) existence of the densities $f_+(t,u)$ and $f_-(t,u)$, which satisfy the Kolmogorov forward equations for a simpler process, referred to {\it mimicking process}, defined in Section \ref{sec:coupling} below. 

\begin{theorem}
\label{thm:systempdealt}
For every $t\in[0,T]$ and $u\in(0,1)$, there exists a unique local analytic vector of densities $\vec{v}(t,u)$, i.e., for any $u\in(0,1)$ there exists an open set $O_u$ containing $u$ such that there is a unique analytic vector $\vec{v}(t,u)$ on $O_u$ satisfying $\vec{v}(0,u)=(f_+(0,u),f_-(0,u))^\top$ for any $u\in O_u$.
\end{theorem}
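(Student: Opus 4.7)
The plan is to combine the method of characteristics with a contraction-mapping fixed-point argument for the nonlocal $\alpha$-term, carried out in a scale of spaces of functions analytic in $u$. The transport characteristics of \eqref{eq:PDEsalt} are explicit: for $f_+$, $\dot u = 1-u$ gives $u_+(t;u_0) = 1-(1-u_0)\eee^{-t}$; for $f_-$, $\dot u = -u$ gives $u_-(t;u_0) = u_0\eee^{-t}$. Both flows are analytic and map $(0,1)$ into $(0,1)$ for every $t\in[0,T]$, so if $\alpha$ is treated as a known analytic function of $(t,u)$, I would integrate \eqref{eq:PDEsalt} along characteristics to convert the system into a linear coupled integral equation whose unique solution $\vec v = \Psi[\alpha]$ is analytic in $u$ by standard Cauchy--Picard theory for linear ODEs with analytic coefficients.

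The fixed-point problem $\vec v = \Psi[\alpha(\cdot\,;\,\cdot\,,\vec v)]$ should then be tractable because of the special structure of $\alpha$: since $H(t;y,u)$ in \eqref{eqn:Hdef} is affine in both $y$ and $u$, the numerator and denominator of \eqref{eq:alpha} are affine in $u$ with coefficients that are simple linear combinations of the zeroth and first $y$-moments of $f_\pm(t,\cdot)$. Hence $\alpha$ is a rational function of $u$ depending on $\vec v$ through only four scalar moments. I would further use mass conservation at the level of the underlying Markov process $(x_i^\star,y_i^\star)$, which guarantees that any physical solution satisfies $\int_0^1[f_+(t,y)+f_-(t,y)]\,{\rm d}y = 1$ for all $t\in[0,T]$; combined with the uniform positivity $H(t;y,u)\geq c>0$ on $[0,T]\times[0,1]^2$ under the natural assumption $p_0>0$, this yields a uniform lower bound on the denominator of $\alpha$ and shows that $\alpha$ is an analytic, uniformly Lipschitz functional of $\vec v$ on the invariant set of probability densities.

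For the analytic regularity in $u$, I would fix $u^\star\in(0,1)$ and work in the scale of Banach spaces $X_\rho$ of $\mathbb{C}^2$-valued functions of $(t,u)$ that, for each $t\in[0,T]$, are analytic in the complex disk $D_\rho(u^\star)$ of radius $\rho$ around $u^\star$, equipped with the supremum norm. The transport operators $(1-u)\partial_u$ and $u\partial_u$ lose exactly one power of $\rho$ (controlled by Cauchy's estimate $\|\partial_u v\|_{\rho'}\leq(\rho-\rho')^{-1}\|v\|_\rho$), while the $\alpha$-dependent source terms are of order zero and yield a Lipschitz contribution on each $X_\rho$ by the previous paragraph. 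This fits the hypotheses of the abstract analytic Cauchy--Kovalevskaya theorem of Nirenberg--Nishida in scales of Banach spaces, delivering a unique solution in $C([0,T];X_{\rho(T)})$ for some $\rho(T)>0$; taking $O_{u^\star} = (u^\star-\rho(T),u^\star+\rho(T))\cap(0,1)$ and letting $u^\star$ range over $(0,1)$ would complete the argument. The main obstacle I anticipate is the uniform lower bound on the denominator of $\alpha$ along the Picard iteration: this requires propagating mass conservation at the PDE level and ensuring it is at least approximately preserved by the Picard approximants, which hinges on the specific algebraic structure of the source terms in \eqref{eq:PDEsalt}.
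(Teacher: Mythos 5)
Your proposal reaches the same conclusion as the paper but by a genuinely different and far more explicit route. The paper's own proof is essentially two lines: it rewrites \eqref{eq:PDEsalt} in the vector form \eqref{eq:system2} with the matrix $N(t;u,\vec v)$ of \eqref{eq:matrixN}, observes that $H$ in \eqref{eqn:Hdef} is analytic (exponential in $t$, affine in $u$ and $v$) so that $\alpha(t;u,\vec v)$ is a ratio of analytic functions, and then invokes the Cauchy--Kovalevskaya theorem \cite{K1875}. You instead propose to prove the relevant quasilinear, \emph{nonlocal} CK statement by hand: characteristics for the transport part (your formulas $u_+(t;u_0)=1-(1-u_0)\eee^{-t}$ and $u_-(t;u_0)=u_0\eee^{-t}$ are correct and both flows preserve $(0,1)$), a Picard contraction for the $\alpha$-coupling, and the Nirenberg--Nishida abstract theorem in a scale of analytic function spaces. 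This costs more work but buys something real: the classical CK theorem is stated for right-hand sides depending \emph{locally} on the unknown, whereas $\alpha$ depends on $\vec v(t,\cdot)$ through integrals over all of $[0,1]$, so the paper's one-line appeal to \cite{K1875} silently uses exactly the extension you make explicit. Your observations that $H$ is affine in both arguments, so that $\alpha$ depends on $\vec v$ only through four scalar moments, and that the key quantitative input is a uniform lower bound on the denominator of $\alpha$ (this is where $p_0>0$ and mass conservation enter), are both on target; the paper itself exploits precisely this moment reduction in the proof of Theorem \ref{thm:systempdeasymp2}.

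One soft spot: the fixed-point problem cannot be run disk-by-disk as written. Because $\alpha(t;u,\vec v)$ involves $\int_0^1 \dd y\, f_\pm(t,y)H(t;y,u)$, the Picard map on your space $X_\rho$ attached to a single $u^\star$ needs the iterates on all of $[0,1]$, not just on $D_\rho(u^\star)$; and the moments do \emph{not} close into a finite-dimensional ODE system (integrating the rational function $u\mapsto\alpha(t;u,\vec v)$ against $f_\mp(t,\cdot)$ produces quantities outside the span of finitely many moments), so you cannot first solve for the four moments separately and then treat $\alpha$ as known. You would need to run the iteration simultaneously on a cover of $(0,1)$ by disks, or on a fixed complex neighbourhood of $[0,1]$, coupling the local analytic norms with a global $L^1$ bound that feeds the moments and keeps the denominator bounded away from zero along the iteration. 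This is repairable and does not change the architecture of your argument, but as written ``fix $u^\star$ and contract in $X_\rho$'' is not a self-map. (Both your argument and the paper's also only deliver existence for $t$ in a possibly shortened interval unless one adds a continuation step; since the paper shares this imprecision, I do not count it against you.)
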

Let $p_+$ be the limiting proportion of vertices holding opinion $+$, i.e., 
\begin{equation}\label{eq:limprop}
	p_+ = \lim_{t\to\infty}\lim_{n\to\infty} \dfrac{N_+(t)}{n}.
\end{equation}
Note that this quantity is well defined and the order of the two limits {\it does} matter. Indeed, the process $Z(t)=((x_i(t))_{i=1,...,n},G_n(t))_{t\geq0}$ is an irreducible and aperiodic Markov chain with finite state space. Thus, by using the independence of vertices in the mimicking process (see Section \ref{sec:coupling}), together with Lemma \ref{lem:coupling} below, we know that the real process is close to this simpler one, so that also the real process converges to its stationary distribution in the limit as $n\to\infty$ followed by $t\to\infty$. This implies that the proportion $p_+$ of vertices having opinion $+$ is well defined, and it is determined by the parameters of the model. 
\begin{theorem}
\label{thm:systempdeasymp2}
For any $u\in[0,1]$, the following statements hold.
\begin{itemize}
	\item[(i)]	If $\pi_+=\pi_-$, then
	\begin{equation}\label{eq:limdensities2}
	\lim_{t\to\infty}\vec{v}(t,u) := \left(\begin{matrix} f_+(\infty,u) \\ f_-(\infty,u)\end{matrix}\right)
	= \left(\begin{matrix} u  \\ 1-u \end{matrix}\right) f_{\beta p_+, \beta (1-p_+)} (u),
	\end{equation}
	where $f_{\beta p_+, \beta (1-p_+)}$ is the density corresponding to a ${\rm Beta}(\beta p_+,\beta (1-p_+))$ random variable, i.e.,
	\[
	f_{\beta p_+, \beta (1-p_+)} (u) = u^{\beta p_+-1} (1-u)^{\beta (1-p_+)-1}\mathbf{1}\{u\in[0,1]\}.
	\]
	\item[(ii)] If $\pi_+\neq\pi_-$, then only consensus is admissible, i.e., either
	\begin{equation}\label{eq:limdensities3}
	\lim_{t\to\infty}\vec{v}(t,u) := \left(\begin{matrix} f_+(\infty,u) \\ f_-(\infty,u)\end{matrix}\right)
	= \left(\begin{matrix} \delta_1(u)  \\ 0 \end{matrix}\right),
	\end{equation}
	or
	\begin{equation}\label{eq:limdensities4}
	\lim_{t\to\infty}\vec{v}(t,u) := \left(\begin{matrix} f_+(\infty,u) \\ f_-(\infty,u)\end{matrix}\right)
	= \left(\begin{matrix} 0  \\ \delta_0(u) \end{matrix}\right),
	\end{equation}
	where $\delta_{x_0}(\cdot)$ is the point--mass distribution at $x_0$. 
	\end{itemize}
\end{theorem}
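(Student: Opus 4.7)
The plan is to reduce both parts to the stationary PDE system of Model~1 analysed in Theorem~\ref{thm:systempdeasymp}(ii).

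For part~(i), when $\pi_+=\pi_-=:\pi$ the edge probability~\eqref{eqn:Hdef} becomes $H(t;y,u)=\eee^{-t}p_0+\pi(1-\eee^{-t})$, independent of $y$ and $u$. Substituting this into~\eqref{eq:alpha} collapses both integrals and gives $\alpha(t;u,\vec v)=p_+(t)$, constant in $u$. The mimicking PDE~\eqref{eq:PDEsalt} with this constant $\alpha$ is then exactly the Model~1 system~\eqref{eq:PDEs} under the identification $\gamma_{-+}\leftrightarrow\beta p_+(t)$ and $\gamma_{+-}\leftrightarrow\beta(1-p_+(t))$. Since $p_+(t)\to p_+$ as $t\to\infty$, by the martingale property of the $+$-proportion in voter dynamics on an essentially uniform weighted complete graph, the stationary equations coincide with those in the proof of Theorem~\ref{thm:systempdeasymp}(ii) for $\gamma_{-+}=\beta p_+$, $\gamma_{+-}=\beta(1-p_+)$, and \eqref{eq:limdensities2} follows.

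For part~(ii), my plan is to rule out every stationary solution with $p_+\in(0,1)$. Integrating the stationary form of each equation in~\eqref{eq:PDEsalt} over $u\in[0,1]$ and integrating by parts yields
\[
f_+^\infty(0) \,=\, \beta\!\left[p_+-E[\alpha]\right],\qquad f_-^\infty(1) \,=\, \beta\!\left[E[\alpha]-p_+\right],
\]
with $E[\alpha] = \int_0^1 \alpha(u)(f_+^\infty+f_-^\infty)(u)\,{\rm d}u$; since both boundary values must be non-negative, they force $E[\alpha]=p_+$ and $f_+^\infty(0)=f_-^\infty(1)=0$. Writing $H(\infty;y,u)=a+b(y+u)$ with $a=\pi_-$ and $b=(\pi_+-\pi_-)/2\neq 0$, elementary algebra in~\eqref{eq:alpha} gives
\[
\alpha(u) - p_+ \,=\, \dfrac{b\,p_+(1-p_+)(\bar u_+ - \bar u_-)}{Q(u)},
\]
where $Q(u)=a+bu+b(E_++E_-)>0$, $E_\pm=\int_0^1 u f_\pm^\infty(u)\,{\rm d}u$, $p_-=1-p_+$, and $\bar u_\pm=E_\pm/p_\pm$. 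Integrating against $(f_+^\infty+f_-^\infty)$, the stationarity condition $E[\alpha]=p_+$ combined with $Q>0$ and $b\neq 0$ forces $\bar u_+=\bar u_-$.

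Substituting $\bar u_+=\bar u_-$ back into the formula for $\alpha$ collapses its numerator to $p_+Q(u)$, so $\alpha(u)\equiv p_+$. The stationary PDE~\eqref{eq:PDEsalt} then reduces to Model~1's stationary system, and Theorem~\ref{thm:systempdeasymp}(ii) identifies its unique analytic solution as the Beta pair $(u,1-u)f_{\beta p_+,\beta(1-p_+)}(u)$; a direct moment computation for this pair gives $\bar u_+ - \bar u_- = 1/(\beta+1)>0$, contradicting $\bar u_+=\bar u_-$. Hence no stationary solution with $p_+\in(0,1)$ is admissible, so $p_+\in\{0,1\}$ and in each case the characteristic ODE $\dot y = -y+\mathbf{1}\{x=\pm\}$ drives every $y_i(t)$ to the relevant endpoint, producing~\eqref{eq:limdensities3}--\eqref{eq:limdensities4}. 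The main obstacle I foresee is not the algebraic contradiction but its interpretation: one must justify the restriction to analytic stationary solutions (needed to invoke Theorem~\ref{thm:systempdeasymp}(ii) inside the argument) and, more importantly, the claim that the long-time dynamics actually settles into a stationary regime rather than exhibiting persistent oscillations; the coupling to the mimicking process from Section~\ref{sec:coupling} is the natural tool for controlling the second issue.
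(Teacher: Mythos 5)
Your part~(i) is essentially the paper's argument: both reduce to checking that $\alpha(\infty;u,\vec v)=p_+$ when $\pi_+=\pi_-$ (since $H$ is then constant), so that the stationary system becomes Model~1's with $\gamma_{-+}=\beta p_+$, $\gamma_{+-}=\beta(1-p_+)$, and uniqueness among analytic densities follows from the Taylor-coefficient recursion already established for Theorem~\ref{thm:systempdeasymp}(ii). Your part~(ii), however, takes a genuinely different route. The paper rules out non-consensus fixed points by the same power-series machinery as in case~(i): it writes the stationary equation \eqref{eq:mod2asymp} as a recursion on Taylor coefficients with the moments $A_{\pm,0},A_{\pm,1}$ (now with $A_{\pm,0}\neq0$) and asserts, somewhat tersely, that the only analytic solutions are trivial. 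You instead integrate the stationary equations over $u$ to extract the boundary identities $f_+^\infty(0)=\beta(p_+-E[\alpha])$, $f_-^\infty(1)=\beta(E[\alpha]-p_+)$ (which I have checked), force $E[\alpha]=p_+$ by non-negativity, and then exploit the linearity of $H(\infty;\cdot,\cdot)$ to get the closed form $\alpha(u)-p_+=b\,p_+p_-(\bar u_+-\bar u_-)/Q(u)$; the dichotomy $p_+p_-=0$ or $\bar u_+=\bar u_-$ then self-destructs via the explicit Beta moment computation $\bar u_+-\bar u_-=1/(\beta+1)>0$. This moment-based argument is more transparent and arguably more complete than the paper's sketch, at the price of two small technical points you should make explicit: positivity of $Q(u)=\int_0^1 H(\infty;y,u)(f_+^\infty+f_-^\infty)(y)\,\dd y$ (clear when $\min(\pi_+,\pi_-)>0$, degenerate otherwise), and the restriction to analytic stationary profiles, which the paper imposes globally anyway. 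The two caveats you flag yourself --- the heuristic ``martingale'' justification that $p_+(t)$ converges, and the passage from ``no non-consensus fixed point'' to actual convergence of the dynamics --- are gaps shared with the paper's own proof, which is likewise a pure fixed-point analysis, so they do not count against you here.
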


\begin{remark}
	{\rm Note that Theorem \ref{thm:systempdeasymp2}\emph{(i)} does not exclude the possibility that the system reaches consensus. Indeed, consensus corresponds to the limiting cases $p_+ \to 1$ (consensus on opinion $+$) and $p_+ \to 0$ (consensus on opinion $-$). Indeed, if $X\sim{\rm Beta}(\beta p_+,\beta (1-p_+))$, then for any $\beta>0$ we have $X \stackrel{d}{\to} 1$ when $p_+ \to 1$ and $X \stackrel{d}{\to} 0$ when $p_+ \to 0$.} \hfill$\spadesuit$
    \end{remark}

 \begin{remark}\label{rem:diff}
 	{\rm Due to the presence of two-way feedback, model 2 displays far richer behaviour than model 1. Indeed, Theorem \ref{thm:systempdeasymp2} suggests that there are two distinct scenarios. First, Theorem \ref{thm:systempdeasymp2}\emph{(ii)} indicates that if $\pi_-\neq \pi_+$, then the model moves toward consensus quickly with the proportion of vertices holding the minority opinion falling below any $\varepsilon >0$ in a finite time that does not increase with $n$ (see Figure \ref{fig-later} for an example). Since consensus is absorbing, this (effectively) ends the dynamics of opinions. Second, if $\pi_+=\pi_-$, then Theorem \ref{thm:systempdeasymp2}\emph{(i)} suggests that the model moves towards one of the fixed points in described by \eqref{eq:limdensities2} quickly, i.e., in a finite time that does not increase with $n$. However, unless this fixed point is consensus, the dynamics of the opinions does not end. Moreover, \eqref{eq:limdensities2} describes a continuum (i.e., a manifold) of fixed point solutions. It may therefore be insightful to observe the process on a different timescale, i.e., $(h^{G_n(nt)})_{t \in [0,T]}$ rather than $(h^{G_n(t)})_{t \in [0,T]}$. We conjecture that if $\pi_+=\pi_-$, then as $n\to\infty$ the process $(h^{G_n(nt)})_{t \in [0,T]}$ converges to a diffusion on the manifold defined by the fixed point solutions in \eqref{eq:limdensities2}.} \hfill$\spadesuit$
\end{remark}

%%%

\subsection{Numerical examples}
\label{mod2:num}

Suppose that $n=100$, $p_0=0.05$, $\beta=0.66$, $\pi_+=0.9$, $\pi_-=0.1$, and vertices initially hold opinion $+$ with probability 0.5. Figure \ref{fig-plot} displays an outcome of the empirical graphon (top row) and its corresponding functional law of large numbers from Theorem \ref{thm:graphonconvalt} (bottom row) when $T=1,2,3$. Recalling Assumption 4 and that $\pi_+>\pi_-$, we note that the darker top left corner of each graphon represents edges between pairs of opinion $+$ vertices, whereas the lighter bottom right corner represents edges between pairs of opinion $-$ vertices. Observe that as $T$ increases the proportion of vertices holding opinion $+$ increases. Figure \ref{fig-later} displays the functional law of large numbers when $T=6,12,18$, and illustrates convergence of the model to consensus on opinion $+$. This agrees with Theorem \ref{thm:systempdeasymp2}\emph{(ii)} which states that when $\pi_-\neq \pi_+$ the only fixed point solutions of the model correspond to consensus. Finally, for different parameters, Fig.~\ref{fig:histmod2} is in line with the claim that the density function of the types be a Beta distribution, as was stated in Theorem \ref{thm:systempdeasymp2}\emph{(i)}.

%%%%%%%%%%%%%%%%%%%%%%%%%%%%%%%%%%%%%%%%%%%%%%%%%%%%%%%%%%%%%%%%%%%
\begin{figure}%[htbp]
\includegraphics[width=5cm]{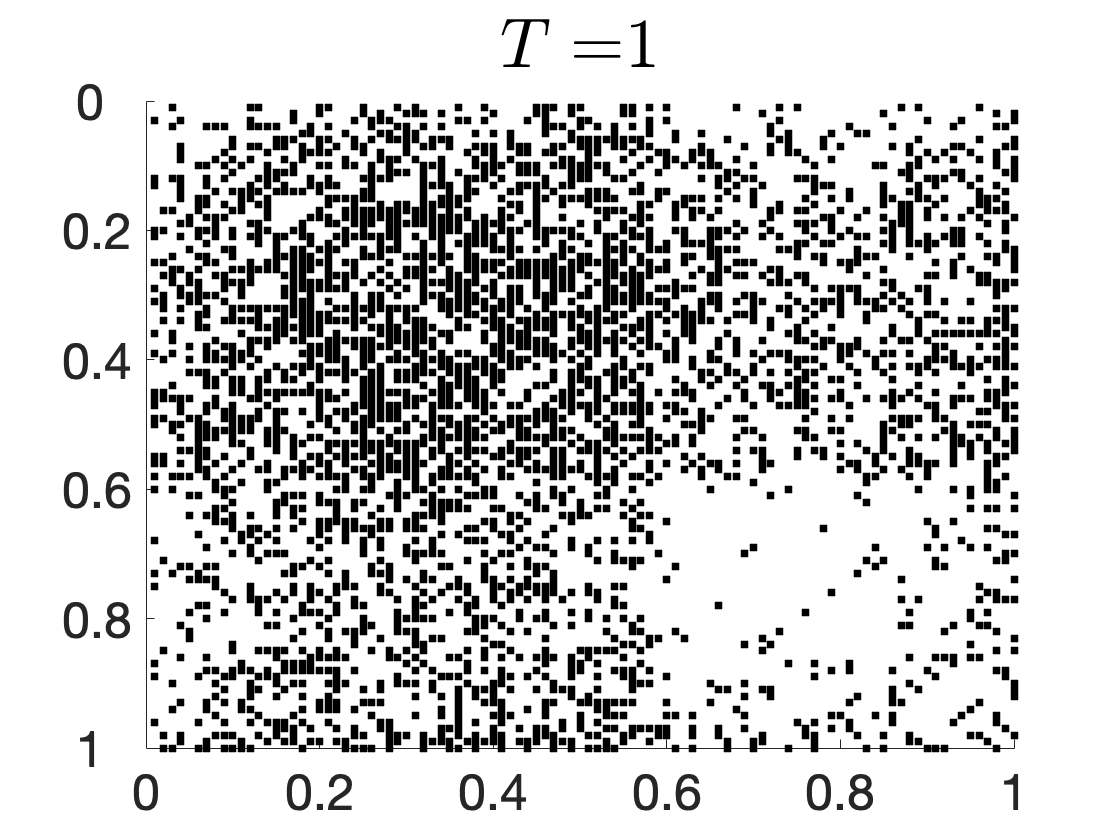}
\includegraphics[width=5cm]{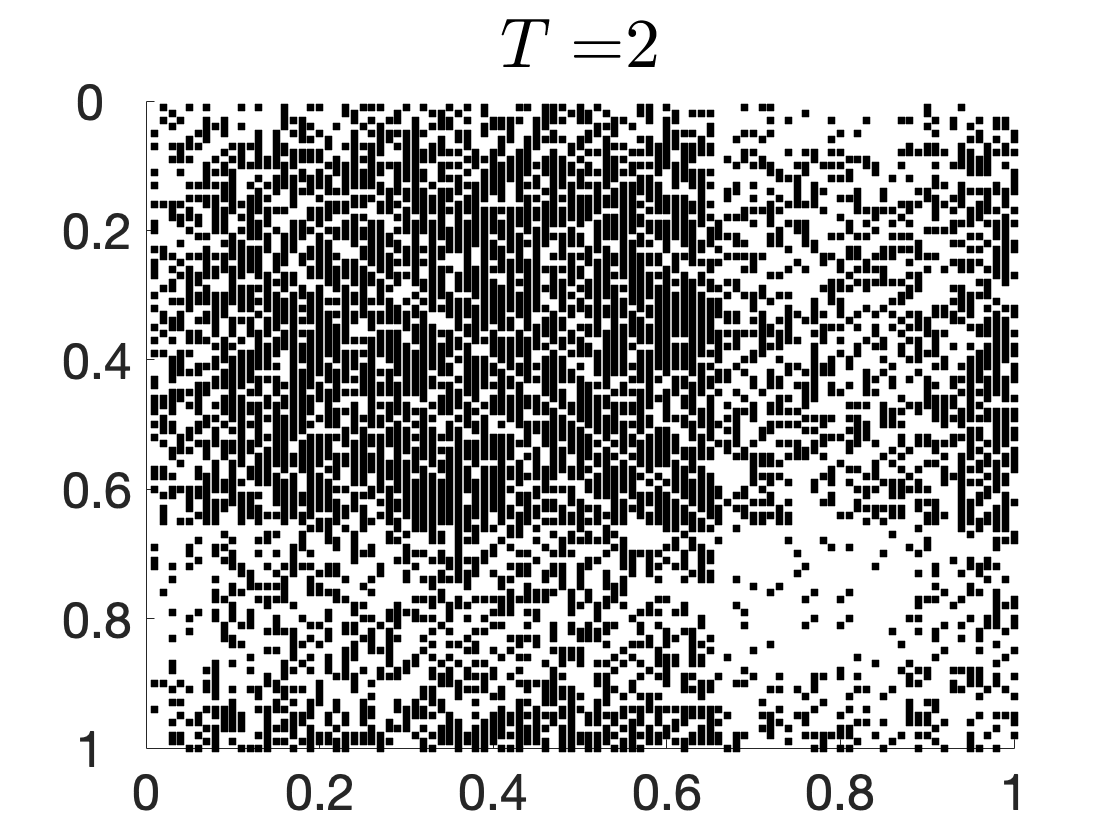}
\includegraphics[width=5cm]{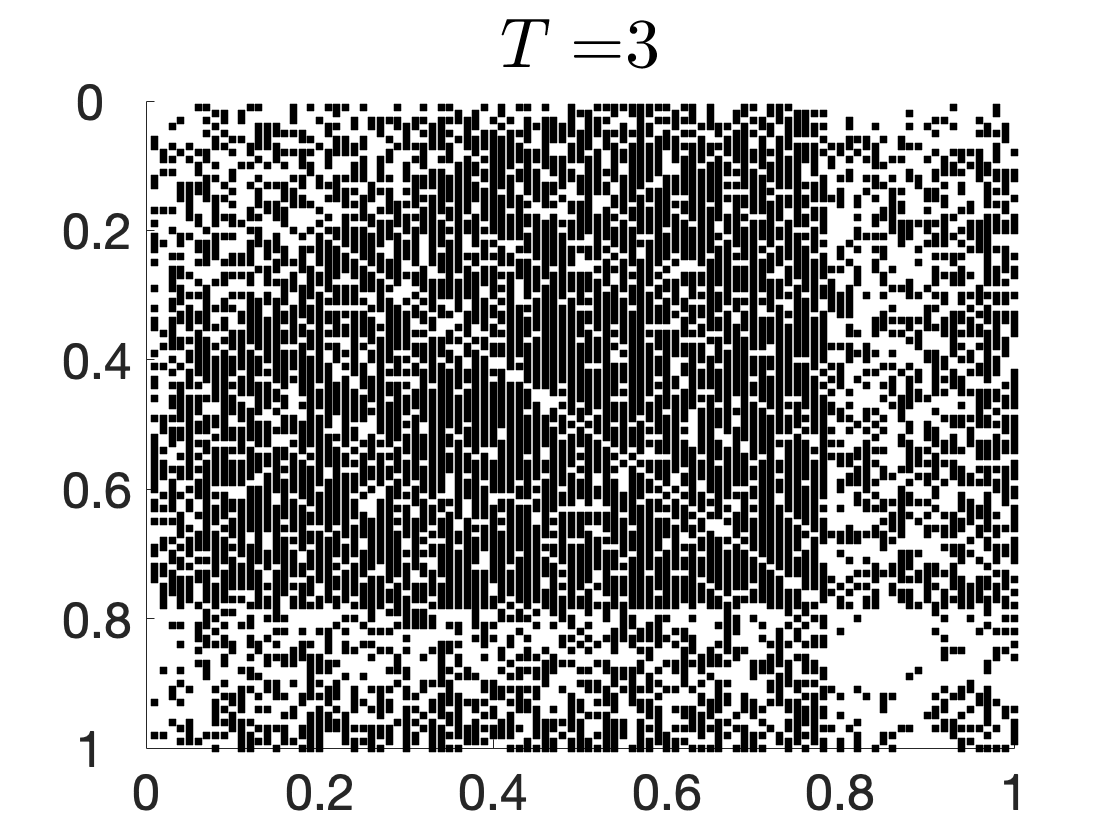}
	
\includegraphics[width=5cm]{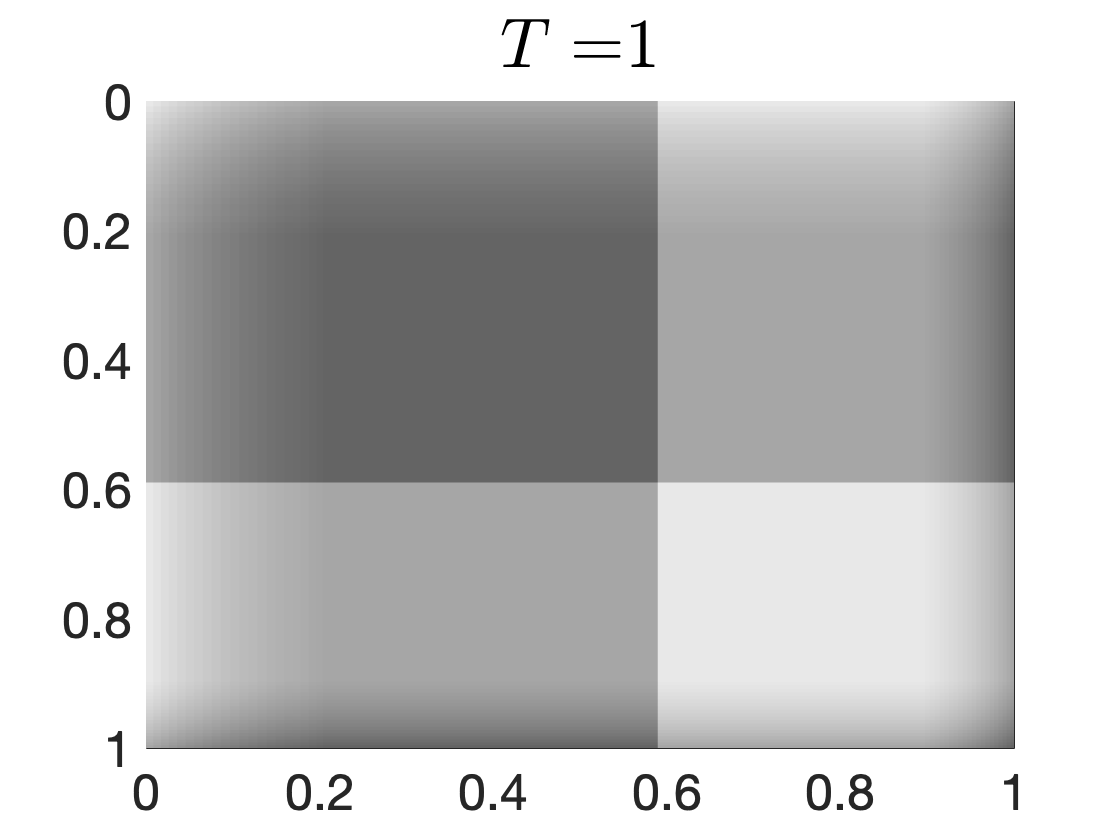}
\includegraphics[width=5cm]{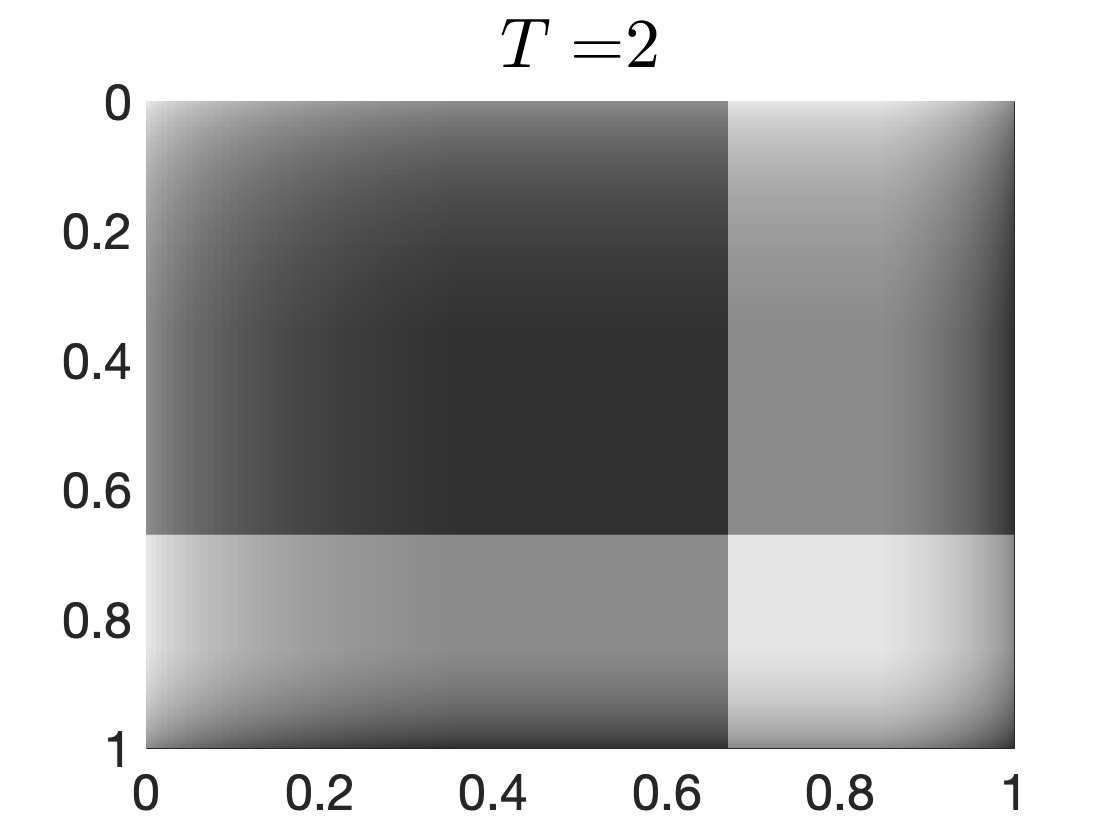}
\includegraphics[width=5cm]{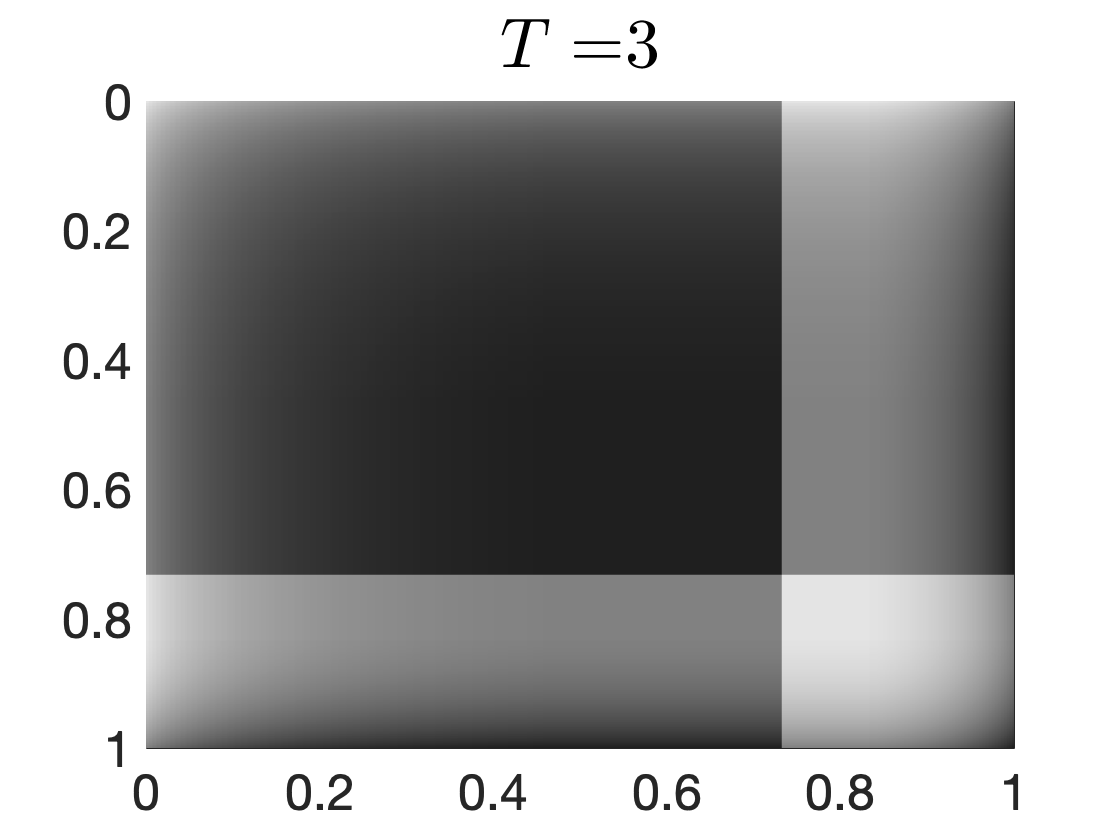}
\caption{\small The top row displays the empirical graphon when $n=100$ and $T=1,2,3$, the bottom row displays the corresponding functional law of large numbers. Simulations are based on a single run. A dot represents an edge. The labels of the vertices are updated dynamically so that they are ordered lexicographically, i.e., the vertices with opinion $+$ have lower labels than the vertices with opinion $-$, and then by increasing type.}
\label{fig-plot}
\end{figure}
%%%%%%%%%%%%%%%%%%%%%%%%%%%%%%%%%%%%%%%%%%%%%%%%%%%%%%%%%%%%%%%%%%%%

%%%%%%%%%%%%%%%%%%%%%%%%%%%%%%%%%%%%%%%%%%%%%%%%%%%%%%%%%%%%%%%%%%%%
\begin{figure}%[htbp]
\includegraphics[width=5cm]{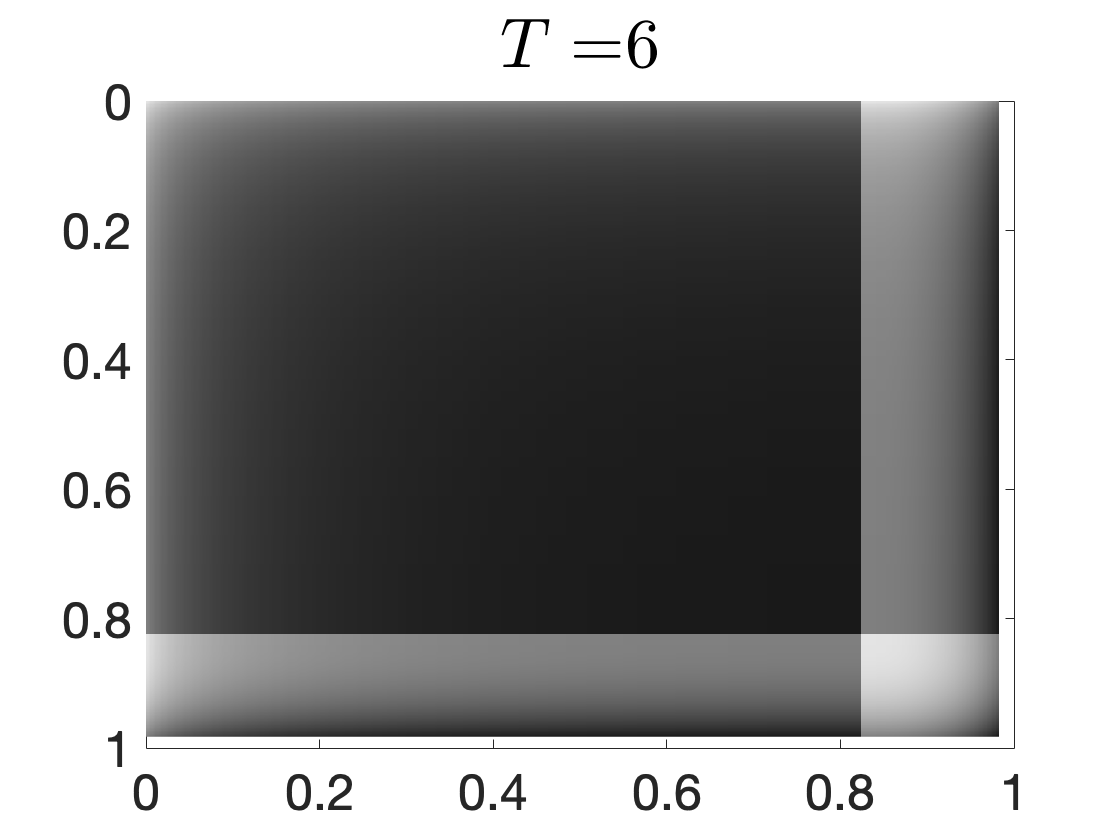}
\includegraphics[width=5cm]{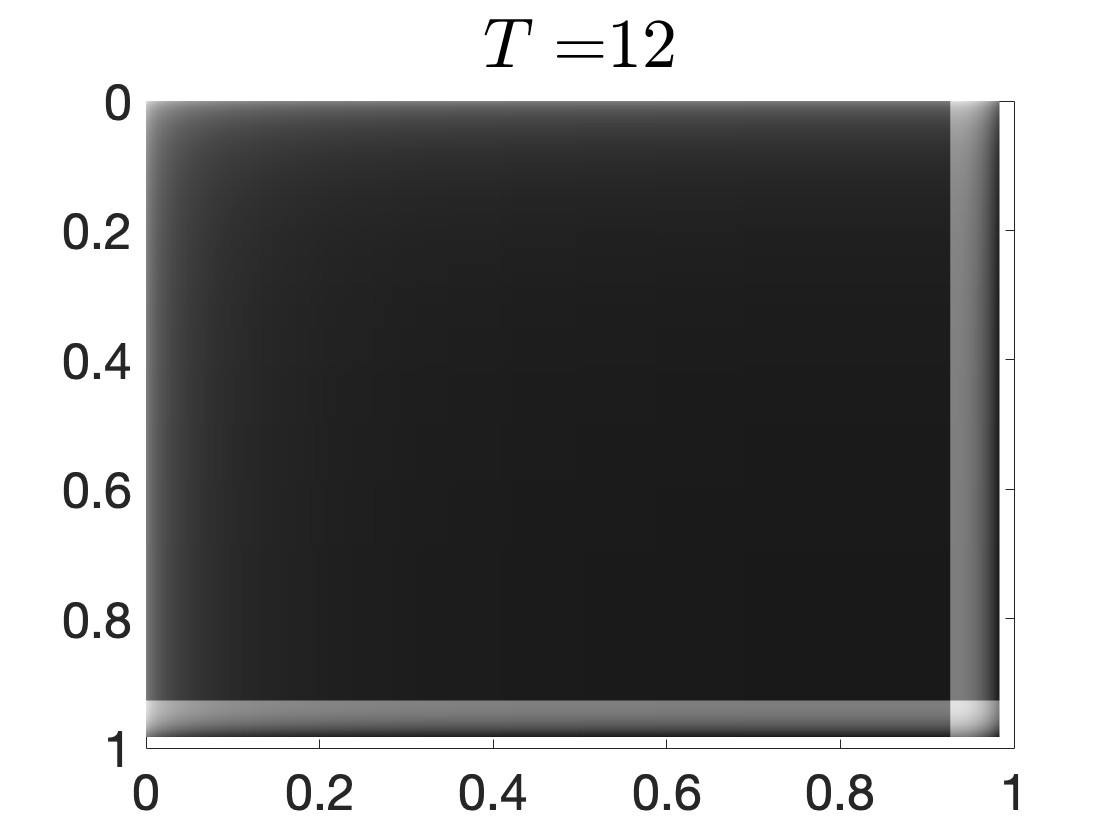}
\includegraphics[width=5cm]{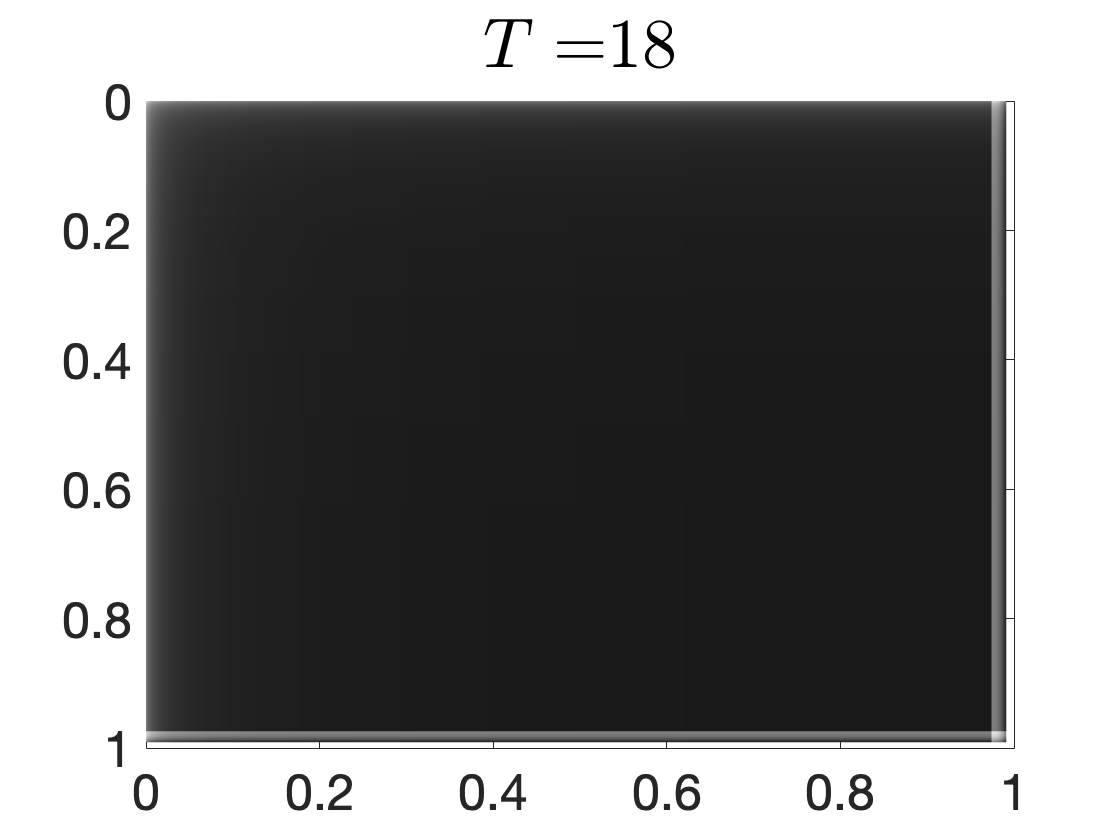}
\caption{\small Eventually all the vertices end up holding opinion $+$ in model 2.}
\label{fig-later}
\end{figure}
%%%%%%%%%%%%%%%%%%%%%%%%%%%%%%%%%%%%%%%%%%%%%%%%%%%%%%%%%%%%%%%%%%%

%%%%%%%%%%%%%%%%%%%%%%%%%%%%%%%%%%%%%%%%%%%%%%%%%
\begin{figure}%[htbp]
	\includegraphics[width=5.1cm]{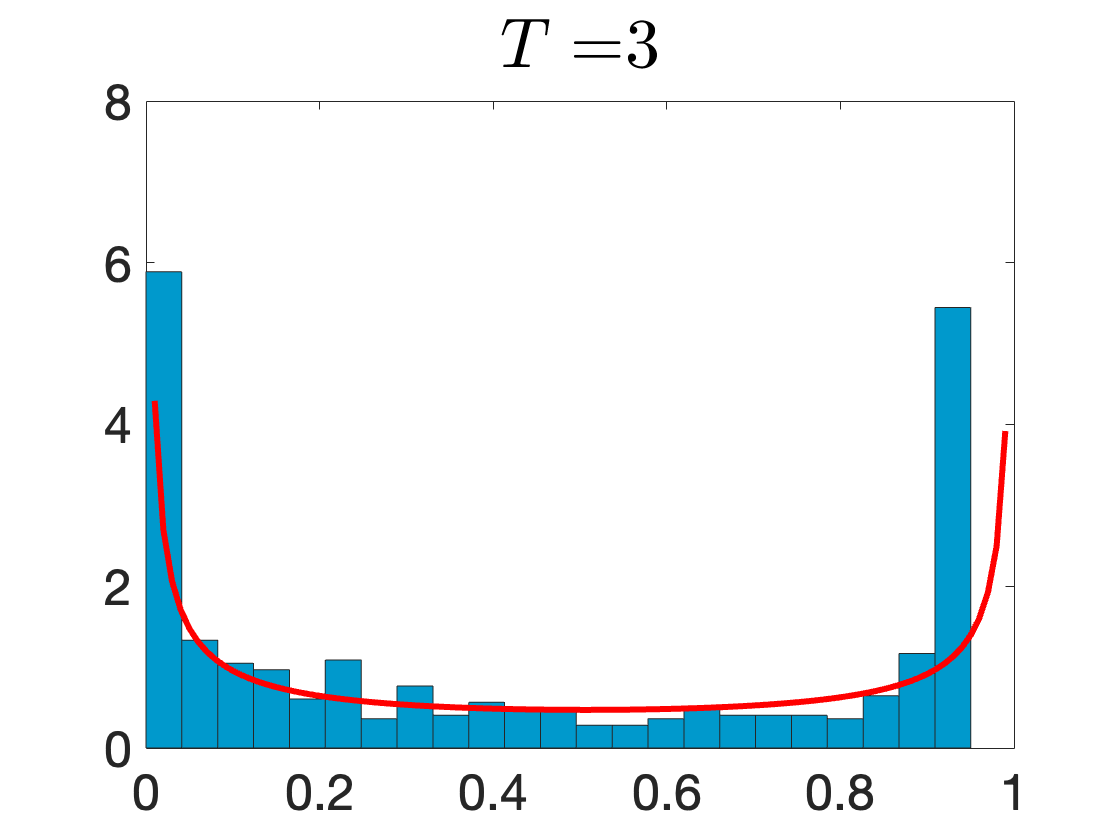}
	\includegraphics[width=5.1cm]{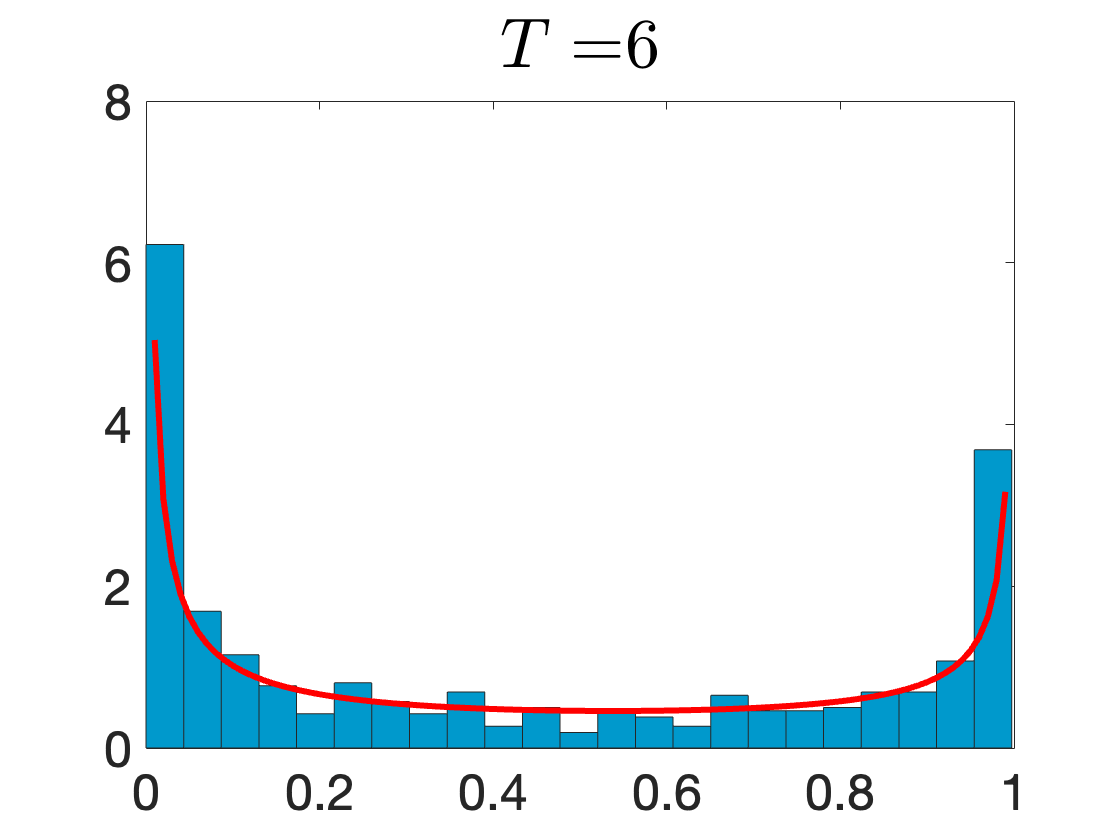}
	\includegraphics[width=5.1cm]{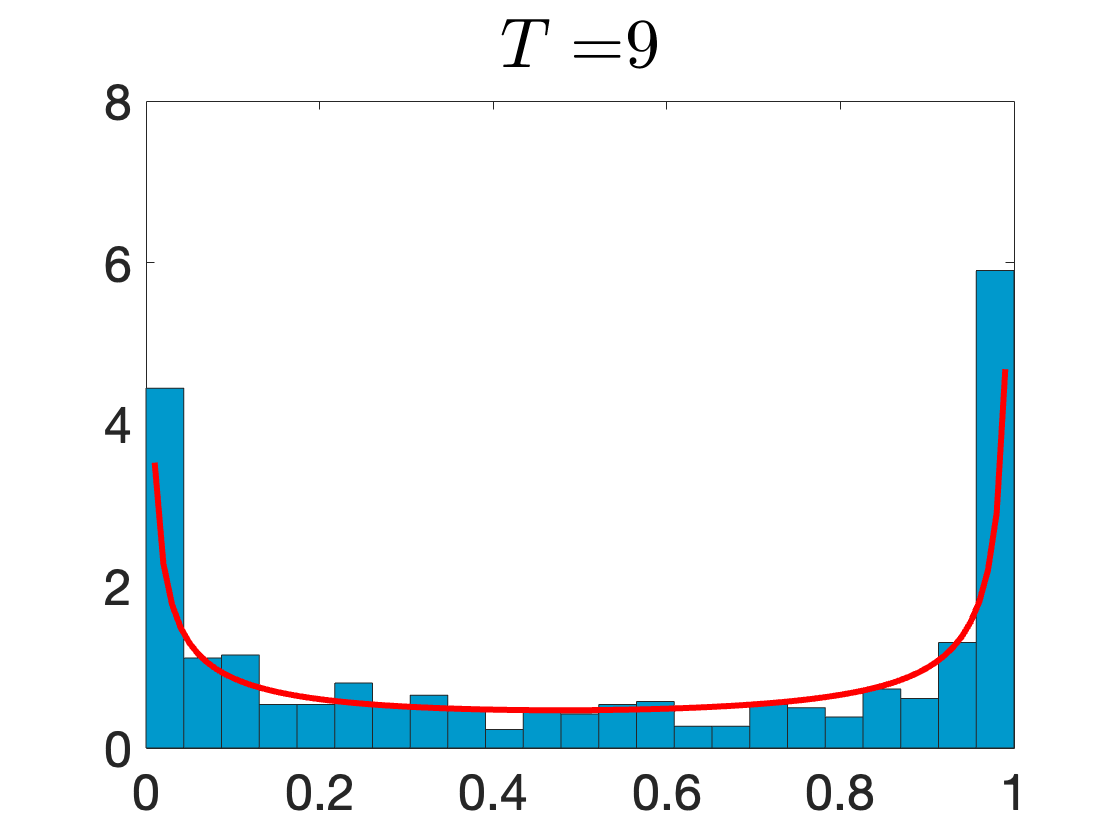}
	\caption{\small Empirical distribution of $(y_i(T))_{i\in [n]}$ when $n=600$ and $T=3,6,9$, for $\beta=0.66$, $\pi_+=\pi_-=0.6$. For any $i\in[n]$, $y_i(0)$ is chosen uniformly at random in $[0,1]$, independently of the other vertices. The red line indicates the corresponding $\hbox{Beta}(\beta p_+,\beta(1-p_+))$ density function, where $p_+$ denotes the proportion of vertices holding opinion $+$. Simulations are based on a single run.}
	\label{fig:histmod2}
\end{figure}
%%%%%%%%%%%%%%%%%%%%%%%%%%%%%%%%%%%%%%%%%%%%%%%%%%

%%%

\subsection{Proof of the main theorems}
\label{mod2:pr}

%%

%%%

\subsubsection{Coupling}
\label{sec:coupling}

To prove convergence of the process in the space of graphons, we follow Step 4 in Section \ref{sec:genstrategy}: we construct a mimicking process that satisfies Assumptions 1-3 in Section \ref{sec:genstrategy} and that is close to the original process in $L^1$. More precisely, we describe a coupling between the original and the mimicking process that is such that the number of differences in the vertex opinions that occur in a small time window $(t,t+\Delta)$, for a given $\Delta>0$, is $o(\Delta)$ as $n\to\infty$; similarly, the number of differences in the edges is $o(\Delta^2)$. Finally, we apply Hoeffding's inequality in the regime $\Delta\to 0$, in combination with the above bounds between the original and the mimicking process, to  establish the claim for the whole time interval $[0,T]$.

\medskip\noindent
{\it Mimicking process.} Suppose that the process $(G_n^*(t))_{t\in[0,T]}$ is characterised by the following dynamics:
\begin{itemize}
	\item $G_n^*(0)$ is an ERRG with connection probability $p_0$.
	\item Each vertex $i$ holds opinion $+$ or $-$, and is assigned a rate-$\beta$ Poisson clock. Each time when this clock rings vertex $i$ updates its opinion as follows: with probability $\alpha(t;y^*_i(t),\vec{v})$ it takes opinion $+$, where $y^*_i(t)$ denotes the type of vertex $i$ at time $t$ in $G_n^*(t)$. Otherwise, it takes opinion $-$.
	\item Each edge is assigned a rate-1 Poisson clock. Each time when this clock rings the edge $ij$ updates its state (either active or inactive): the edge is active with a probability that depends on the opinion of the selected vertices, namely, $\tfrac12(p(x^*_i(t))+p(x^*_j(t)))$, where $x^*_i(t)$ denotes the opinion of vertex $i$ at time $t$ in $G_n^*(t)$. Otherwise, the edge $ij$ is inactive.
\end{itemize}

\begin{lemma}
\label{lem:coupling}
There exists a coupling of $\{(G_n(t))_{t\in[0,T]}\}_{n\in\mathbb{N}}$ and $\{(G_n^*(t))_{t\in[0,T]}\}_{n\in\mathbb{N}}$ such that, for any $\epsilon>0$,
\begin{equation}
\lim_{n\to\infty} \dfrac{1}{n} \log \mathbb{P}\bigl(\|\tilde h^{G_n(t)}-\tilde h^{G_n^*(t)}\|_{L_1} > \epsilon
\hbox{ for some } t\in[0,T]\bigr) = 0.
\end{equation}
\end{lemma}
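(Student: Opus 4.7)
The strategy is to couple the two processes through shared Poisson clocks and uniform random variables, quantify the per-update discrepancy between the copying rule and the mimicking rule via Hoeffding's inequality applied to the empirical fraction of opinion-$+$ neighbours of a tagged vertex, and then run a bootstrap on a fine time grid so that the aggregated discrepancy stays uniformly small on $[0,T]$. Concretely, attach to each potential edge $ij$ a common rate-$1$ Poisson clock with i.i.d.\ uniform marks, and to each vertex $i$ a common rate-$\beta$ clock with i.i.d.\ uniform marks, driving both $G_n$ and $G_n^*$. The edge updates are governed by the identical rule $\tfrac12(\pi_{x_i}+\pi_{x_j})$ in both processes, so an edge state can disagree only after one of its endpoints has disagreed. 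For a vertex clock ring of $i$ at time $t$, $G_n^*$ sets $x_i^*=+$ iff $U \leq \alpha(t;y_i^*(t),\vec v)$, while $G_n$ sets $x_i=+$ iff $U \leq \rho_i(t)$, where $\rho_i(t)$ is the empirical fraction of opinion-$+$ active neighbours of $i$ at time $t^-$. Sharing $U$ makes the disagreement probability $|\rho_i(t)-\alpha(t;y_i^*(t),\vec v)|$.

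\textbf{Single-step concentration.} The core estimate is that, on the event that the two coupled processes have agreed up to time $t$, one has $|\rho_i(t)-\alpha(t;y_i(t),\vec v)| \leq \delta$ with probability $1-\eee^{-cn\delta^2}$. Indeed, the numerator and the denominator of $\rho_i(t)$ are each sums of $n-1$ Bernoulli indicators which, conditionally on the vertex-type configuration, are independent with means given, up to $O(1/n)$, by the integrands of the numerator and denominator in \eqref{eq:alpha} after plugging in the edge-connection function \eqref{eqn:Hdef}. Two applications of Hoeffding's inequality from Appendix~\ref{appA} concentrate each of these sums around its mean, and a union bound over the $n$ vertices costs only a polynomial factor.

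\textbf{Time discretisation and bootstrap.} Partition $[0,T]$ into $\lceil T/\Delta \rceil$ slices of length $\Delta$, and let $D_n(t)$ and $\tilde D_n(t)$ denote the number of disagreeing vertex opinions and disagreeing edges at time $t$. A Chernoff bound produces at most $Cn\Delta$ vertex clock rings and $Cn^2\Delta$ edge clock rings in a single slice, with exponentially small failure probability. A new vertex disagreement can arise only from a failure of the single-step concentration above, from the type $y_i$ having drifted previously between the two processes, or from the shared $U$ falling into the gap between the two decision thresholds; a new edge disagreement requires an edge ring at an endpoint pair that is already disagreeing. Summing these contributions yields a Gronwall-type recursion for $D_n(k\Delta)+\tilde D_n(k\Delta)/n$, which iterates across the $T/\Delta$ slices to keep $D_n(t)\leq \varepsilon n$ and $\tilde D_n(t)\leq \varepsilon n^2$ uniformly on $[0,T]$ with probability tending to one sufficiently fast. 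Translating to the sorted graphon $L_1$-distance via Assumption~4, one bounds $\|\tilde h^{G_n(t)}-\tilde h^{G_n^*(t)}\|_{L_1}$ by $2\tilde D_n(t)/n^2 + O(D_n(t)/n)$, where the correction absorbs the effect of the dynamic relabelling.

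\textbf{Main obstacle.} The central difficulty is the feedback loop in the single-step concentration: identifying $\rho_i(t)$ with $\alpha(t;y_i(t),\vec v)$ requires the empirical type distribution of the active neighbours of $i$ at time $t$ to be close to $F(t;\cdot)$, which is precisely the graphon approximation one is trying to establish. The bootstrap resolves this circularity, but calibrating the constants so that the Gronwall blow-up across the $T/\Delta$ slices does not destroy the exponential concentration, and correctly accounting for the rank reshuffling induced by Assumption~4 when turning vertex and edge disagreements into a bound on the sorted $L_1$-distance, constitute the most delicate bookkeeping.
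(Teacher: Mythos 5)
Your overall architecture — shared Poisson clocks and uniform marks, disagreement probability $|\rho_i(t)-\alpha(t;y_i^*(t),\vec v)|$ per vertex ring, the observation that an edge can only disagree after an endpoint does, time-slicing into windows of length $\Delta$ with Chernoff/Hoeffding control of the number of rings, and a Gronwall-type recursion iterated over the slices — coincides with the paper's proof (Steps I--III and Lemma~\ref{lmm:induction}). However, your ``single-step concentration'' contains a genuine gap: you apply Hoeffding to the numerator and denominator of $\rho_i(t)=N_i^+(t)/N_i(t)$ in the \emph{real} process $G_n$, asserting that the edge indicators are conditionally independent given the vertex-type configuration with means given by the integrands in \eqref{eq:alpha}. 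That conditional independence is exactly the property that \emph{fails} for the co-evolutionary process — it is the reason Assumption~2 breaks down and the mimicking process is introduced in the first place (Step~4 of Section~\ref{sec:genstrategy}): conditioning on the vertex opinion paths tilts the joint law of the edges, because the copying mechanism makes the opinions functions of the edges. Your ``main obstacle'' paragraph locates the circularity in the empirical type distribution being close to $F$, but the deeper problem is that the independence needed for Hoeffding is simply not available in $G_n$, and the bootstrap over time slices does not restore it.

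The paper's resolution is a triangle inequality you do not perform: it inserts the mimicking process's neighbourhood fraction $N_i^{*+}(s)/N_i^*(s)$ between $\rho_i(s)$ and $\alpha(s;y_i^*(s),\vec v)$. The first gap is bounded \emph{deterministically} by the running count of edge discrepancies, $\frac{2}{\ell n^2}(1+\frac1\ell)(d_E(t)+N_E)$, on the event that all degrees exceed $\ell n$ (Lemma~\ref{lem:setN}); the second gap concerns only the one-way-dependent process $G_n^*$, for which conditional independence does hold, and is controlled not by a bare Hoeffding bound but by the cut-distance convergence $d_\square(g_n^*,g^{[F]})\to 0$ after rewriting $\alpha$ as a graphon functional (Lemma~\ref{lmm:repalpha}); this is needed uniformly over the vertex $i$, which a pointwise mean computation does not immediately give. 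Two further omissions: you never control the denominators $N_i(t)$, $N_i^*(t)$ away from zero (the set $\mathcal{N}_\ell$ and Lemma~\ref{lem:setN} in the paper), and without that the ratio estimates degenerate; and your claimed means for the real process's indicators would in any case require knowing the neighbours' type distribution, which is the quantity under proof. Rerouting your concentration step through $G_n^*$ as above would repair the argument and essentially reproduce the paper's proof.
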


\begin{proof} The claim is proved in three steps.

\medskip\noindent
{\it Step I: description of the coupling.} Suppose that the outcomes of $(G_n(t))_{t\geq0}$ and $(G^*_n(t))_{t\geq0}$ are generated in the following manner.
\begin{itemize}
\item For each $i\in[n]$, vertex $i$ is assigned the same (coupled) rate-$\beta$ Poisson clock in both processes. When the clock associated to vertex $i$ rings, generate an outcome $u$ drawn from a $\hbox{Unif}(0,1)$ distribution.
\begin{itemize}
\item If $u\leq N_i^+(t)/N_i(t)$, then vertex $i$ takes opinion $+$, otherwise it takes opinion $-$, in $(G_n(t))_{t\geq0}$, where $N_i(t)$ and $N_i^+(t)$ denote the total number of neighbours of vertex $i$ and those having opinion $+$, respectively, in $(G_n(t))_{t\geq0}$.
\item If $u\leq\alpha(t;y^*_i(t),\vec{v})$, then vertex $i$ takes opinion $+$, otherwise it takes opinion $-$, in $(G^*_n(t))_{t\geq0}$.
\end{itemize}
\item Assign each edge the same (coupled) rate-$1$ Poisson clock in both processes. When the Poisson clock associated with edge $ij$ rings, generate an outcome $u$ drawn from a $\hbox{Unif}(0,1)$ distribution. 
\begin{itemize}
\item If $u\leq (p(x_i(t))+p(x_j(t)))/2$, then edge $ij$ is active, otherwise it is inactive, in $(G_n(t))_{t\geq0}$.
\item If $u\leq (p(x^*_i(t))+p(x^*_j(t)))/2$, then edge $ij$ is active, otherwise it is inactive, in $(G^*_n(t))_{t\geq0}$.
\end{itemize} 
\end{itemize}

\medskip\noindent
{\it Step II: majorization of the $L_1$ distance.} If vertex $i$ has the same opinion in both models and the clock associated with vertex $i$ rings, then a {\it difference} is formed (i.e., vertex $i$ has opinion $+$ in one process and opinion $-$ in the other) with probability 
\[
\left|\dfrac{N_i^+(t)}{N_i(t)}-\alpha(t;y^*_i(t),\vec{v})\right|.
\]
If edge $ij$ has the same state (active or inactive) in both models and the clock associated with edge $ij$ rings, then a difference is formed with probability 
\[
\dfrac{1}{2}\left|p(x_i(t))-p(x^*_i(t))+p(x_j(t))-p(x^*_j(t))\right|.
\]
Note that if the opinions of both vertices $i$ and $j$ are the same in the two processes, then the edge $ij$ has the same state in both $G_n(t)$ and $G_n^*(t)$. Thus, the two processes produce a different state for edges only if at their ends the vertices have different opinions.

\medskip\noindent
{\it Step III: bounding the dominating process.}

Our approach consists in showing that the number of differences that occur in some small time window $[t, t+\Delta)$, with $\Delta>0$, is stochastically dominated by a random variable with certain desirable properties. We subsequently consider the limit as $\Delta \to 0$ and use Hoeffding's inequality to establish the result.
	Let
	\begin{itemize}
		\item $t_\Delta = [t, t+ \Delta)$ be the time window,
		\item $N_E \sim{\rm Bin}({n \choose 2}, 1-e^{-\Delta})$ be the total number of edge clocks that ring in $t_\Delta$,
		\item $N_V \sim{\rm Bin}(n, 1- e^{-\beta \Delta})$ be the total number of vertex clocks that ring in $t_\Delta$,
		\item $d_E(t)$ be the total number of differences in the edges between $G_n(t)$ and $G^*_n(t)$,
		\item $d_V(t)$ be the total number of differences in the opinions of the vertices between $F_n(t;\cdot,\cdot)$ and $F^*_n(t;\cdot,\cdot)$, where $F_n$ is defined in \eqref{eq:emptype} and $F_n^*$ is defined similarly by referring to the graph $G^*_n(t)$. Here, referring to $F_n(t;\cdot,\cdot)$ and $F^*_n(t;\cdot,\cdot)$ means ``all information about the vertex types and opinions''.
	\end{itemize}
	Given $\Omega\equiv(N_E, N_V, d_E(t),d_V(t),F_n(t;\cdot,\cdot),F_n^*(t;\cdot,\cdot))$, we want to bound the probability $p_{V}$ that, when an edge clock (of an edge chosen uniformly at random) rings, a difference is formed during any time $s \in t_\Delta$. Note that if the clock of a vertex pair $ij$ rings, then a difference can only be formed when either $x_i(s) \neq x_i^*(s)$ or $x_j(s) \neq x_j^*(s)$. Observing that $d_V(t)=\sum_{i=1}^n \mathbf{1}\{ x_i(t) \neq x_i^*(t) \}$, considering the worst case scenario that every time a vertex clock rings during $t_\Delta$ a new difference is formed, and observing that each vertex $i$ is part of $n$ vertex pairs, we have
	\[
	p_{V} \leq \frac{n (d_V(t)+ N_V)}{{ n \choose 2}}.
	\]
Note that in the above inequality the denominator is the total number of vertex pairs and the numerator is a bound on the total number of vertex pairs that have at least one vertex with a difference of opinion between the two models. Because this bound is uniform in $t_\Delta$ and does not depend on $N_E$, given $\Omega$, we therefore have 
	\[
	d_E(t+\Delta) - d_E(t) \stackrel{{\rm st}}{\leq}{\rm Bin}\left(N_E, \frac{n (d_V(t)+ N_V)}{{ n \choose 2}}\right),
	\]
where $\stackrel{{\rm st}}{\leq}$ means ``stochastically dominated by''.

	Given $\Omega$, we next want to bound the probability that, when a vertex clock (of a vertex chosen uniformly at random) rings, a difference is formed during any time $s \in t_\Delta$, namely, we want to derive a bound for
	\[
	\frac{1}{n} \sum_{i=1}^n \left| \frac{N_i^+(s)}{N_i(s)} - \alpha(s;y^*_i(s),\vec{v}) \right|,
	\]
which applies for all $s \in t_\Delta$. This will be more involved than for the conditional probability that an edge difference is formed, which was derived above. Applying the triangle inequality, we have 
	\begin{align*}
		 \dfrac{1}{n} \displaystyle\sum_{i=1}^n \left| \frac{N_i^+(s)}{N_i(s)} - \alpha(s;y^*_i(s),\vec{v}) \right| 
		\leq \underbrace{\frac{1}{n} \sum_{i=1}^n \left| \frac{N_i^+(s)}{N_i(s)} - \frac{N_i^{*+}(s)}{N^*_i(s)} \right|}_{(i)} + \underbrace{\frac{1}{n} \sum_{i=1}^n \left| \frac{N_i^{*+}(s)}{N^*_i(s)} - \alpha(s;y^*_i(s),\vec{v}) \right|}_{{(ii)}},
	\end{align*}
 where $N_i^*(s)$ and $N_i^{*+}(s)$ denote the total number of neighbours of vertex $i$ and those having opinion $+$, respectively, in $G^*_n(s)$.
We first bound \emph{(i)}. For $\ell \in [0,1]$, let 
	\begin{equation}
		\label{eq:setN}
	\mathcal{N}_{\ell} = \{ i \in [n] : N_i(s) \leq \ell n \text{ or } N_i^*(s) \leq \ell n \text{ for some } s \in t_\Delta \}.
	\end{equation}
By Lemma \ref{lem:setN}, we know that it is possible to choose $\ell>0$ sufficiently small so that $|\mathcal{N}_{\ell}|=0$ with asymptotically high probability. In addition, we have
	\begin{align*}
		\frac{1}{n}\sum_{i=1}^n   \left| \frac{N^+_i(s)}{N_i(s)} - \frac{N^{*+}_i(s)}{N^*_i(s)} \right| 
		&= \frac{1}{n} \sum_{i \in [n] \backslash \mathcal{N}_{\ell}} \left| \frac{N^+_i(s)}{N_i(s)} - \frac{N^{*+}_i(s)}{N^*_i(s)} \right| + \frac{1}{n} \sum_{i \in \mathcal{N}_{\ell}}   \left| \frac{N^+_i(s)}{N_i(s)} - \frac{N^{*+}_i(s)}{N^*_i(s)} \right| \\
		&\leq  \frac{1}{n} \sum_{i \in [n] \backslash \mathcal{N}_{\ell}}   \left| \frac{N^+_i(s)}{N_i(s)} - \frac{N^{*+}_i(s)}{N_i(s)} \right|+ \frac{1}{n} \sum_{i \in [n] \backslash \mathcal{N}_{\ell}}   \left| \frac{N^{*+}_i(s)}{N_i(s)} - \frac{N^{*+}_i(s)}{N^*_i(s)} \right| +  \frac{1}{n} |\mathcal{N}_{\ell} | \\
		&\leq \frac{1}{n} \sum_{i \in [n] \backslash \mathcal{N}_{\ell}} \frac{1}{\ell n} \left| N^{+}_i(s) - N^{*+}_i(s) \right| + \sum_{i \in [n] \backslash \mathcal{N}_{\ell}}  \sup_{s \in t_\Delta} \left| \frac{N^{*+}_i(s) - N^{+}_i(s)}{N_i(s)N^*_i(s)} \right| +  \frac{1}{n} |\mathcal{N}_{\ell} | \\
		&\leq \frac{2}{\ell n^2} (d_E(t) + N_E) +  \frac{2}{\ell^2 n^2} (d_E(t) + N_E) + \frac{1}{n} |\mathcal{N}_{\ell}|.
	\end{align*} 
	
We next bound \emph{(ii)}. To do so we use the convergence of the mimicking process in the space of graphons. We therefore rewrite the expressions $N^{*+}_i(s)/N^*_i(s)$ and $\alpha(s;y^*_i(s),\vec{v})$ in terms of their empirical and limiting graphons, respectively. Recall that, as per Assumption 4, vertices are labelled dynamically, so that all opinion $+$ vertices have a lower label than opinion $-$ vertices. Let $r^+_n(s)$ denote the proportion of opinion $+$ vertices, and $r^+(s)$ the proportion of opinion $+$ vertices, both in the limiting graphon at time $s$. Let $g^*_n(s;u,v) = h^{G^*_n}(s;u,v)$ denote the empirical graphon at time $s$. We first note that 
	\[
	\dfrac{N^{*+}_i(s)}{N^*_i(s)} =  \dfrac{\int_{0}^{r^{*+}_n(s)} {\rm d}u \, g_n^{*}(s; \frac{i}{n},u)}{ \int_{0}^{1} {\rm d}u \, g^{*}_n(s; \frac{i}{n},u)},
	\]
which follows from definition of the empirical graphon. In addition, by Lemma \ref{lmm:repalpha}, 
	\begin{equation}\label{eqn:aExp}
		\alpha(s;y^*_i(s),\vec{v}) = \dfrac{{\int_0^{r^+(s)}} {\rm d}y \, g^{[F]}(s;y,F(s;y^*_i(s)))}{\int_0^1 {\rm d}y \, g^{[F]}(s;y,F(s; y^*_i(s)))}.
	\end{equation}
We therefore have 
	\begin{align*}
		\frac{1}{n}\sum_{i=1}^n  \left| \dfrac{N^{*+}_i(s)}{N^*_i(s)}- \alpha(s;y^*_i(s),\vec{v})\right| 
		&=    \frac{1}{n}\sum_{i=1}^n  \left| \dfrac{\int_{0}^{r^+_n(s)} {\rm d}u \, g_n^{*}(s; \frac{i}{n},u)}{ \int_{0}^{1} {\rm d}u \, g^{*}_n(s; \frac{i}{n},u)}- \dfrac{{\int_0^{r^+(s)}} {\rm d}y \, g^{[F]}(s;y,F(s;y^*_i(s)))}{\int_0^1 {\rm d}y \, g^{[F]}(s;y,F(s; y^*_i(s)))}\right| \\
		&= \int_0^1 {\rm d}v \left| \dfrac{\int_{0}^{r^+_n(s)} {\rm d}u \, g_n^{*}(s; v,u)}{ \int_{0}^{1} {\rm d}u \, g^{*}_n(s; v,u)}- \dfrac{{\int_0^{r^+(s)}} {\rm d}y \, g^{[F]}(s;y,F(s;\bar F^*_n(s;v)))}{\int_0^1 {\rm d}y \, g^{[F]}(s;y,F(s; \bar F^*_n(s;v)))}\right| \\
		&\leq \underbrace{\int_0^1 {\rm d}v \left| \dfrac{\int_{0}^{r^+_n(s)} {\rm d}u \, g_n^{*}(s; v,u)}{ \int_{0}^{1} {\rm d}u \, g^{*}_n(s; v,u)}- \dfrac{{\int_0^{r^+(s)}} {\rm d}y \, g^{[F]}(s;y,v)}{\int_0^1 {\rm d}y \, g^{[F]}(s;y,v)}\right|}_{\rm (a)} \\
		&\quad + \underbrace{\int_0^1 {\rm d}v \left| \dfrac{{\int_0^{r^+(s)}} {\rm d}y \, g^{[F]}(s;y,v)}{\int_0^1 {\rm d}y \, g^{[F]}(s;y,v)} - \dfrac{{\int_0^{r^+(s)}} {\rm d}y \, g^{[F]}(s;y,F(s;\bar F^*_n(s;v)))}{\int_0^1 {\rm d}y \, g^{[F]}(s;y,F(s; \bar F^*_n(s;v)))} \right|}_{\rm (b)}.
	\end{align*}
We bound (a) and (b) separately. We start with (a), where again by Lemma \ref{lem:setN} we have that $|\mathcal{N}_{\ell}|=0$ for some $\ell>0$ with high probability,
	\begin{align*}
		&\int_0^1 {\rm d}v \left| \dfrac{\int_{0}^{r^+_n(s)} {\rm d}u \, g_n^{*}(s; v,u)}{ \int_{0}^{1} {\rm d}u \, g^{*}_n(s; v,u)}- \dfrac{{\int_0^{r^+(s)}} {\rm d}y \, g^{[F]}(s;y,v)}{\int_0^1 {\rm d}y \, g^{[F]}(s;y,v)}\right| \\
		&\leq \int_0^1 {\rm d}v \left| \dfrac{\int_{0}^{r^+_n(s)} {\rm d}u \, g_n^{*}(s; v,u)}{ \int_{0}^{1} {\rm d}u \, g^{*}_n(s; v,u)}- \dfrac{{\int_0^{r^+(s)}} {\rm d}y \, g^{[F]}(s;y,v)}{\int_{0}^{1} {\rm d}u \, g^{*}_n(s; v,u)}\right| \\ 
		&\qquad +
		\int_0^1 {\rm d}v \left| \dfrac{{\int_0^{r^+(s)}} {\rm d}y \, g^{[F]}(s;y,v)}{ \int_{0}^{1} {\rm d}u \, g^{*}_n(s; v,u)}- \dfrac{{\int_0^{r^+(s)}} {\rm d}y \, g^{[F]}(s;y,v)}{\int_0^1 {\rm d}y \, g^{[F]}(s;y,v)}\right| \\
		&\leq \frac{1}{\ell} \int_0^1 {\rm d}v \left|\int_{0}^{r^+_n(s)} {\rm d}u \, g_n^{*}(s; v,u)  - {\int_0^{r^+(s)}} {\rm d}y \, g^{[F]}(s;y,v) \right| + \frac{1}{\ell^2} \int_0^1 {\rm d}v \left| \int_{0}^1 {\rm d}u \, [g^*_n(s;v,u) - g^{[F]}(s;u,v)] \right| \\
		&\leq \frac{1}{\ell}|r^+_n(s) - r^+(s)| + \frac{1}{\ell} \int_0^1 {\rm d}v \left|\int_{0}^{r^+(s)} {\rm d}u \, [g_n^{*}(s; v,u)  - g^{[F]}(s;v,u)] \right| \\
		&\qquad + \frac{1}{\ell^2} \int_0^1 {\rm d}v \left| \int_{0}^1 {\rm d}u \, [g^*_n(s;v,u) - g^{[F]}(s;u,v)] \right| \\
		&\leq \frac{1}{\ell}|r^+_n(s) - r^+(s)| + \frac{2}{\ell}  d_\square(g^*_n(s; \cdot), g^{[F]}(s;\cdot)) + \frac{2}{\ell^2} d_\square(g^*_n(s; \cdot), g^{[F]}(s;\cdot)).
	\end{align*}
Note that, by the functional law of large numbers, as $n \to \infty$ each of these terms tends to zero (uniformly in $s$) with high probability.

Next, we focus on (b). Since $F_n^*\Rightarrow F$, it follows from the continuous map theorem that 
	\[
	d_V(t+\Delta) - d_V(t) \stackrel{{\rm st}}{\leq}{\rm Bin}\left(N_V , \frac{2 }{\ell n^2}\left(1+ \frac{1}{\ell}\right)(d_E(t)+ N_E) +\frac{1}{n}|\mathcal{N}_{\ell}|+\varepsilon(n)\right),
	\]
where $\varepsilon(n)$ is a function that can be chosen arbitrarily small with high probability, and can therefore effectively be ignored in the limit $n\to\infty$. Again, as $n\to\infty$ we have that $|{\cal N}_{\ell}|\to0$ by Lemma \ref{lem:setN}.

Our plan now is to bound the number of differences formed in the vertices and edges between the two models by considering the time intervals $[0,\Delta]$, $[\Delta, 2 \Delta]$, $\dots$, $[T-\Delta,T]$ in sequence, and then use the union bound. We will use the results above which, for $n$ large enough, can be summarised as
	\begin{equation}
		\begin{aligned}\label{eqn:coB}
			N_E &\stackrel{{\rm st}}{\leq}{\rm Bin}\left( {n \choose 2}, \Delta\right), \\
			N_V &\stackrel{{\rm st}}{\leq}{\rm Bin}\left( n , \beta \Delta\right), \\
			d_E(t+\Delta) - d_E(t) &\stackrel{{\rm st}}{\leq}{\rm Bin}\left(N_E, \frac{n (d_V(t)+ N_V)}{{ n \choose 2}}\right), \\
			d_V(t+\Delta) - d_V(t) &\stackrel{{\rm st}}{\leq}{\rm Bin}\left(N_V , \frac{2 }{\ell n^2}\left(1+ \frac{1}{\ell}\right)(d_E(t)+ N_E)\right).
		\end{aligned}
	\end{equation}
For any $\Delta>0$, by Hoeffding's inequality (see \eqref{eq:Hoeffgen}), we have 
	\begin{equation}
		\begin{aligned}\label{eqn:HoefN}
			\mathbb{P}\left( N_E \leq 2 n^2 \Delta \right) &\leq \exp{(-2 n^2 \Delta^2)}, \\
			\mathbb{P}\left( N_V \leq 2 n \beta \Delta \right) &\leq \exp{(-2 n \beta^2 \Delta^2)}.
		\end{aligned}
	\end{equation}
Let $t_i=i\Delta$ for $i=0,\dots,T/\Delta$, and $t_\Delta^{(i)}=[t_{i-1},t_i]$ for $i=1,\dots,T/\Delta$. In addition, let $N^{(i)}_E$ (respectively, $N^{(i)}_V$) denote the number of edge (respectively, vertex) clocks that ring during the time interval $t_\Delta^{(i)}$. By \eqref{eqn:HoefN}, for any $\Delta >0$, we have 
	\begin{align*}
		&\limsup_{n\to\infty}\mathbb{P}\left( \exists i \in \{1, \dots, T/\Delta\} : N^{(i)}_E \geq n \Delta \right) \leq \lim_{n\to\infty}\frac{T}{\Delta} \exp(-2n^2\Delta) = 0, \\
		&\limsup_{n\to\infty}\mathbb{P}\left( \exists i \in \{1, \dots, T/\Delta\} : N^{(i)}_V \geq n \Delta \right) \leq \lim_{n\to\infty}\frac{T}{\Delta} \exp(-2n\beta^2\Delta^2) = 0.
	\end{align*}
For this reason it is justified to let $N^{(i)}_V \leq 2 n \beta \Delta$ and $N^{(i)}_E \leq n^2 \Delta$ for all $i$. Hence, by applying \eqref{eqn:coB} we obtain the following stochastic inequalities:
	\begin{align*}
		d_E(t_i)-d_E(t_{i-1}) &\stackrel{\rm st}{\leq} {\rm{\rm Bin}}\left( n^2 \Delta, \frac{n d_V(t_{i-1}) + 2n^2 \beta \Delta}{n^2/2} \right), \\
		d_V(t_i)-d_V(t_{i-1}) &\stackrel{\rm st}{\leq} {\rm Bin}\left( 2 n \beta \Delta, \frac{2}{\ell n^2}\left(1 + \frac{1}{\ell}\right)(d_E(t_{i-1})+n^2\Delta)\right).
	\end{align*}
Applying Hoeffding's inequality once more, and using the union bound, we obtain, for any $\Delta>0$,
	\begin{align*}
		\mathbb{P}&\left( \exists i \in \{1, \dots, T/\Delta\} : d_E(t_i)-d_E(t_{i-1}) \geq \frac{2 n^2 \Delta (nd_V(t_{i-1}) + 2n^2 \beta \Delta }{n^2/2} \right) \\
		&\leq \sum_{i=1}^{T/\Delta} \exp \left( - 2 n^2 \Delta \left( \frac{n d_V(t_{i-1}) + 2 n^2 \beta \Delta}{n^2 / 2} \right)^2 \right) \leq \frac{T}{\Delta} \exp (-2 n^2 \beta^2 \Delta^2) \to 0
	\end{align*}
as $n \to \infty$. Similarly,
	\begin{align*}
		\mathbb{P}&\left(  \exists i \in \{1, \dots, T/\Delta\} : d_V(t_i) -d_V(t_{i-1}) \geq 2 \cdot 2 n \beta \Delta \frac{2}{\ell n^2}(1 + \frac{1}{\ell})(d_E(t_{i-1}) + n^2 \Delta) \right) \\
		&\leq \sum_{i=1}^{T/\Delta} \exp \left( -2(2n\beta \Delta)^2 \frac{2}{\ell n^2} (1+\frac{1}{\ell})(d_E(t_{i-1})+n^2 \Delta) \right) \leq \frac{T}{\Delta} \exp \left( -2 (2 n \beta \Delta)^2 \frac{2}{\ell n^2}(1 + \frac{1}{\ell})n^2 \Delta \right) \to 0
	\end{align*}
as $n \to \infty$ for any $\Delta>0$. For any $i$, we can therefore restrict ourselves to
	\begin{equation}
		\label{eq:assumption}
		\begin{aligned}
			d_E(t_{i}) - d_{E}(t_{i-1}) &\leq  b_1 d_V(t) \Delta n + b_2 n^2 \Delta^2, \\
			d_V(t_i) - d_V(t_{i-1}) &\leq  \frac{c_1 d_E(t_{i-1}) \Delta}{n} + c_2 n \Delta^2,
		\end{aligned}
	\end{equation}
where $b_1,b_2,c_1,c_2 <\infty$ are fixed and do not depend on $\Delta$ or $n$. Let $a_1 = \max\{ b_1, c_1\}$ and $a_2 =\max\{ b_2, c_2\}$. By Lemma \eqref{lmm:induction}, we have
	\begin{align*}
		d_V(T) &\leq n a_2 \Delta^2 \sum_{k=1}^{T/\Delta} (a_1 \Delta)^{k-1} {i \choose k} \\ 
		&= n a_2 \Delta^2 \left[ (a_1 \Delta)^{-1} \left( \sum_{k=0}^{T/\Delta} (a_1\Delta)^k {T/\Delta \choose k} - 1 \right) \right]
		= n a_2 \Delta^2 \left[ (a_1 \Delta)^{-1} (1+a_1 \Delta)^{T/\Delta} -1 \right] \\
		&\to n a_2 \Delta^2 \left[ (a_1 \Delta)^{-1}\exp(a_1 T) -1 \right] = n\frac{a_2}{a_1} \Delta (\exp(a_1 T)-1)
	\end{align*}
as $n\to\infty$. Taking $\Delta \downarrow 0$ we conclude that $d_V(T) = o(n)$ as $n \to \infty$ with probability 1. We can similarly show that $d_E(T) = o(n^2)$ with probability 1. The desired result now follows.
\end{proof}

\begin{lemma}\label{lem:mimproc}
	The process $\{(G_n^*(t))_{t\geq0}\}_{n\in\mathbb{N}}$ satisfies the assumptions of \cite[Theorem 3.10]{BdHM22pr}.
\end{lemma}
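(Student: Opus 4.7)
The plan is to verify each of the four assumptions of \cite[Theorem 3.10]{BdHM22pr}, mirroring the steps used in the proof of Theorem \ref{thm:graphonconv}. The crucial structural feature to exploit is that in the mimicking process the vertex switching rates depend only on the \emph{deterministic} solution $\vec{v}(t,\cdot)$ of \eqref{eq:PDEsalt} (whose local existence and uniqueness is given by Theorem \ref{thm:systempdealt}) through the function $\alpha(t;y_i^*(t),\vec{v})$, and not on any random quantity associated with the other vertices. Consequently, $(x_i^*(t),y_i^*(t))_{t\in[0,T]}$, $i\in[n]$, are \emph{independent and identically distributed} (inhomogeneous) Markov processes on $\{-,+\}\times[0,1]$, so that the mimicking process falls squarely within the one-way-feedback framework of Steps 1--3 in Section \ref{sec:genstrategy}.

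First, Assumption 4 (dynamic lexicographic labelling) is in force by construction. Next, to verify Assumption 1, I would apply the classical empirical-measure law of large numbers for i.i.d.\ c\`adl\`ag processes: the time-marginals $F_n^*(t;\cdot)$ converge to the law $F(t;\cdot)$ of a single vertex, and this limit is uniquely determined by the Kolmogorov forward equations \eqref{eq:PDEsalt} associated to the generator \eqref{eq:gen2}. Tightness on $D(\mathcal{M}([0,1]),[0,T])$ follows from Jakubowski's criterion together with the uniform boundedness of the jump rates (at most $\beta$ per vertex) and the $1$-Lipschitz nature of the deterministic flow of $y_i^*$. Assumption 2 is immediate: the edge dynamics of the mimicking process is \emph{identical} to that of the first model, so the conditional edge activation probability at time $t$ is again given by \eqref{eqn:Hdef}, which depends only on $t,y_i^*(t),y_j^*(t)$ and makes edges conditionally independent given the generalised types. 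Assumption 3 reduces to continuity of the map $F\mapsto g^{[F]}$ in the relevant Skorohod topologies; since $H(t;u,v)$ in \eqref{eqn:Hdef} is jointly continuous and bounded, this continuity is inherited in exactly the same way as in the proof of Theorem \ref{thm:graphonconv}.

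Finally, I would verify the tightness condition of \cite[Assumption 3.9]{BdHM22pr} for the graphon-valued process $(h^{G_n^*})_{n\in\mathbb{N}}$ by repeating verbatim the computation at the end of the proof of Theorem \ref{thm:graphonconv}: since the edge dynamics is governed by independent rate-$1$ Poisson clocks, the number of edge flips in a window $[t,t+2\delta]$ is stochastically dominated by $\mathrm{Bin}(\binom{n}{2},1-\eee^{-2\delta})$, and a Chernoff bound followed by a union bound over the $\lfloor T/\delta\rfloor$ windows yields
\[
\lim_{\delta\downarrow 0}\limsup_{n\to\infty}\sup_{t\in[0,T]}\frac{T}{\delta}\,
\mathbb{P}\!\left(C_n^*(t,\delta)>\varepsilon\binom{n}{2}\right)=0
\]
for every $\varepsilon>0$, where $C_n^*$ is defined in analogy with $C_n$ but for $G_n^*$. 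The principal conceptual hurdle is not any individual assumption but rather the need to know \emph{a priori} that the mimicking dynamics is well defined on $[0,T]$, i.e.\ that $\alpha(t;u,\vec{v})$ exists and is regular enough to define a Markov generator; this is precisely what Theorem \ref{thm:systempdealt} guarantees on each local neighbourhood $O_u$, and a standard patching argument along $u\in(0,1)$ yields a globally well-defined jump rate. With these four assumptions checked, \cite[Theorem 3.10]{BdHM22pr} then applies to $(G_n^*(t))_{t\in[0,T]}$ and delivers convergence of $h^{G_n^*}$ to $g^{[F]}$, which is exactly what is needed to combine with the coupling of Lemma \ref{lem:coupling} in order to conclude Theorem \ref{thm:graphonconvalt}.
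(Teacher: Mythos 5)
Your proposal is correct and follows essentially the same route as the paper: the paper's own proof simply observes that the vertices of the mimicking process evolve independently (since the switching probability $\alpha(t;y_i^*(t),\vec v)$ depends only on the deterministic solution of \eqref{eq:PDEsalt}), so that the one-way-feedback arguments used for the first model — including the Chernoff-bound verification of the tightness condition — carry over verbatim. Your write-up is merely a more detailed unpacking of that two-sentence argument, with the welcome extra remark that well-posedness of $\alpha$ on $[0,T]$ rests on Theorem \ref{thm:systempdealt}.
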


\begin{proof}
	The dynamics of each vertex in the mimicking process is independent of the other vertices. We can therefore follow the same reasoning as for the first model to establish a functional law of large numbers for the empirical type process as $n\to \infty$. The other assumptions of \cite[Theorem 3.10]{BdHM22pr} also follow from similar arguments.
\end{proof}

%%%

\subsubsection{Proof of Theorem~\ref{thm:graphonconvalt}}

Theorem~\ref{thm:graphonconvalt} now follows from Lemmas~\ref{lem:coupling}-~\ref{lem:mimproc} and \cite[Theorem 3.10]{BdHM22pr}.

%%%

\subsubsection{Proof of Theorem~\ref{thm:systempdealt}}
\label{sec:pdemodel2}

Recall the definition of the matrix $M(u)$ given in\eqref{eq:defM}. The system of two coupled PDEs can be written as
\begin{equation}
\label{eq:system2}
\dfrac{\partial}{\partial t} \vec{v}(t,u) = M(u) \dfrac{\partial}{\partial u} \vec{v}(t,u) + N(t;u,\vec{v}) \vec{v}(t,u),
\end{equation}
where	
\begin{equation}
\label{eq:matrixN}
N(t;u,\vec{v})=  \left(\begin{matrix} 1-\beta(1-\alpha(t;u,\vec{v}))
& \beta \alpha(t;u,\vec{v}) \\ \beta(1-\alpha(t;u,\vec{v}))
& 1- \beta \alpha(t;u,\vec{v}) \end{matrix}\right).
\end{equation}
The statement directly follows from the Cauchy–Kovalevskaya theorem \cite{K1875} after noting that the function $\alpha(t;u,\vec{v})$ is analytic. Indeed, the function $H(t;u,v)$ depends exponentially on $t$, is linear in both $u$ and $v$, and therefore it is analytic. In addition, we are looking at analytic vectors $\vec{v}(t,u)$, so that $\alpha(t;u,\vec{v})$ is defined as the ratio of two analytic functions, and therefore is analytic.

%%%

\subsubsection{Proof of Theorem \ref{thm:systempdeasymp2}}

We start with case (i). The claimed Beta limit follows by checking that the densities $f_+(\infty,u)$ and $f_-(\infty,u)$ given in \eqref{eq:limdensities2} satisfy 
\begin{equation}
	\label{eq:mod2asymp}
	M(u)\dfrac{\partial}{\partial u}\vec{v}(\infty,u) + N(\infty;u,\vec{v}) \vec{v}(\infty,u) = 0.
\end{equation}
By a straightforward computation we deduce that this is true if and only if 
\[
\alpha(\infty;u,\vec{v}) = p_+,
\]
where
\begin{equation}\label{eq:alphaasymp}
\alpha(\infty;u,\vec{v})= \lim_{t\to\infty} \alpha(t;u,\vec{v})
= \dfrac{\int_0^1 {\rm d}y\,f_+(\infty,y) H(\infty;y,u)}{\int_0^1 {\rm d}y\, [f_+(\infty,y) + f_-(\infty,y)]\,H(\infty;y,u)}
\end{equation}
and
\[
H(\infty;y,u) = \pi_-+\Pi(y+u), \qquad \Pi = \dfrac{\pi_+-\pi_-}{2}.
\]
In this case we have  $\pi_+=\pi_-$, which corresponds to $\Pi=0$ and $H(\infty;y,u)=\pi_-$, 
\[
\alpha(\infty;u,\vec{v}) = \dfrac{\int_0^1 {\rm d}y\,f_+(\infty,y)}{\int_0^1 {\rm d}y\, [f_+(\infty,y) + f_-(\infty,y)]} = \int_0^1 {\rm d}y\,f_+(\infty,y) = p_+.
\]
To conclude the proof of (i), it remains to show that no other analytic densities satisfy \eqref{eq:mod2asymp}. For any $u\in[0,1]$ we may write
$$
f_+(\infty,u) = \sum_{k=0}^\infty f_{+,k} u^k, \qquad 
f_-(\infty,u) = \sum_{k=0}^\infty f_{-,k} u^k,
$$
so that 
$$
\int_0^1  \dd y f_+(\infty,y) H(\infty;y,u)= (\pi_- + \Pi u) m_{+,0} + \Pi m_{+,1},
$$
where
$$
m_{+,0} = \sum_{k=0}^\infty \dfrac{f_{+,k}}{k+1}, \qquad 
m_{+,1} = \sum_{k=0}^\infty \dfrac{f_{+,k}}{k+2}.
$$
Similarly, we set
$$
m_{-,0} = \sum_{k=0}^\infty \dfrac{f_{-,k}}{k+1}, \qquad 
m_{-,1} = \sum_{k=0}^\infty \dfrac{f_{-,k}}{k+2}.
$$
Note that $m_{+,0}=p_+$ and $m_{+,0}+m_{-,0}=1$. After setting $m_0=m_{+,0}+m_{-,0}$ and $m_1=m_{+,1}+m_{-,1}$, we see that \eqref{eq:alphaasymp} becomes
$$
\alpha(\infty;u,\vec{v})
= \dfrac{(\pi_-+\Pi u)m_{+,0} + \Pi m_{+,1}}{(\pi_-+\Pi u)m_0 + \Pi m_1}.
$$
This means that, after putting
$$
A_{+,0} = \Pi m_{+,0}, \qquad A_{+,1} = \pi_-m_{+,0} + \Pi m_{+,1}, \qquad
A_{-,0} = \Pi m_{-,0}, \qquad A_{-,1} = \pi_-m_{-,0} + \Pi m_{-,1},
$$
and $A_0=A_{+,0}+A_{-,0}$, $A_1=A_{+,1}+A_{-,1}$, we can write \eqref{eq:mod2asymp} as
$$
\left\{
\begin{aligned}
	& -\left[\beta (A_{-,1} + uA_{-,0}) - (A_{1} + uA_{0})\right] \sum_{k=0}^\infty f_{+,k} u^k + \beta(A_{+,1}+uA_{+,0}) \sum_{k=0}^\infty f_{-,k} u^k \\
	&\qquad = (1-u)(A_{1} + uA_{0}) \sum_{k=1}^\infty k f_{+,k} u^{k-1}, \\
	&- \left[\beta(A_{+,1} + uA_{+,0}) - (A_{1} + uA_{0})\right] \sum_{k=0}^\infty f_{-,k} u^k + \beta (A_{-,1} + uA_{-,0}) \sum_{k=0}^\infty f_{+,k} u^k \\
	&\qquad = -u(A_{1} + uA_{0}) \sum_{k=1}^\infty k f_{-,k} u^{k-1},
\end{aligned}
\right.
$$
which provides us with recursive relations between the coefficients. Concretely, equating the coefficients pertaining to both sides of the equations yields
\begin{equation}
	\label{eq:systmod2zeroas}
	A_1	f_{+,1} = -[ \beta A_{-,1} - A_1] f_{+,0} +\beta A_{+,1} f_{-,0}, \qquad
	0 = -[ \beta A_{+,1} - A_1] f_{-,0} +\beta A_{-,1} f_{+,0},
\end{equation}
and
\begin{equation}
	\label{eq:systmod2zeroasbis}
	\begin{aligned}
		-(\beta A_{-,1} - A_1) f_{+,1} - (\beta A_{-,0} - A_0) f_{+,0} + \beta A_{+,1} f_{-,1} + \beta A_{+,0} f_{-,0}
		&= 2 A_1	f_{+,2} + (A_0-A_1) f_{+,1}, \\
		-(\beta A_{+,1} - A_1) f_{-,1} - (\beta A_{+,0} - A_0) f_{-,0} + \beta A_{-,1} f_{+,1} + \beta A_{-,0} f_{+,0} 
		&= -A_1 f_{-,1},
	\end{aligned}
\end{equation}
and, for $k\geq2$,
\begin{equation}
	\label{eq:systmod2as}
	\begin{aligned}
		&-(\beta A_{-,1} - A_1) f_{+,k} - (\beta A_{-,0} - A_0) f_{+,k-1} + \beta A_{+,1} f_{-,k} + \beta A_{+,0} f_{-,k-1} \\	
		&\quad= (k+1)A_1 f_{+,k+1} + k(A_0-A_1) f_{+,k} - (k-1)A_0 f_{+,k-1}, \\
		&-(\beta A_{+,1}-A_1) f_{-,k} - (\beta A_{+,0} - A_0) f_{-,k-1} + \beta A_{-,1} f_{+,k} + \beta A_{-,0} f_{+,k-1} \\	
		&\quad= -kA_1 f_{-,k} - (k-1)A_0 f_{-,k-1}.
	\end{aligned}
\end{equation}

Since the condition $\pi_+=\pi_-$ implies that $A_{+,0}=A_{-,0}=0$, from the second equation in \eqref{eq:systmod2as} we can express $f_{-,k}$ in terms of $f_{+,k}$, so that the first equation in \eqref{eq:systmod2as} allows us to write $f_{+,k+1}$ in terms of $f_{+,k}$ only. Thus, the coefficients $f_{+,k}$ and $f_{-,k}$ are uniquely determined once $f_{+,0}$ is known. Since we already proved that the Taylor coefficients of the densities in \eqref{eq:limdensities2} satisfy these recursive relations, we deduce that no other solution is allowed. Note that the value of $f_{+,0}$ is uniquely determined by the fact that $f_+(\infty,u)+f_-(\infty,u)$ is a density. This concludes the proof of case (i).

Consider now case (ii). Suppose that the densities $f_+(\infty,\cdot)$ and $f_-(\infty,\cdot)$ are as in \eqref{eq:limdensities3}-\eqref{eq:limdensities4}, respectively. Note that in this case $\alpha(\infty;u,\vec{v})=1$ and $\alpha(\infty;u,\vec{v})=0$, respectively. The claim then follows by checking that \eqref{eq:mod2asymp} is satisfied by these densities and not satisfied by analytic functions but $f_+(\infty,\cdot)=f_-(\infty,\cdot)=0$, which however are not admissible densities. This can be done by using a similar argument as in case (i), because now the densities in \eqref{eq:limdensities2} coincide with those in \eqref{eq:limdensities3}-\eqref{eq:limdensities4} in the limiting cases $p_+=1$ and $p_+=0$, respectively. This concludes the proof.

%%%

\subsection{Nonlinear version of the second model}
\label{sec:nonlin}

In this section, inspired by the nonlinear version of the voter model in \cite{MM19}, \cite{LHAJS20}, \cite{M23}, \cite{JTSZH20}, we introduce a generalisation of the co-evolutionary model defined above that allows polarisation effects to appear. 
In this model the \emph{vertex dynamics} are the same as above, but we consider generalised \emph{edge dynamics}. We suppose that each edge is resampled at rate 1. (i)~If both adjacent vertices hold opinion $+$, then the edge is active with probability $\pi_+$; (ii)~if both adjacent vertices hold opinion $-$, then the edge is active with probability $\pi_-$; and (iii)~\emph{if one adjacent vertex holds opinion $+$ and the other opinion $-$, then the edge is active with probability $\left[(\pi_++\pi_-)/2\right]^q$, where $q>0$.} Observe that if $q=1$ (i.e., the linear case) then the model is the same as above. We therefore refer to this model as the {\it nonlinear version of the second model}.

The idea behind introducing the parameter $q$ is to enable the model to display polarisation, in that the system becomes increasingly polarised as $q$ grows. We refer to polarisation as the propensity of the population to split into two distinct communities connected by a small number of edges. Observe that as $q$ increases the probability to have agreeing edges does not change, whereas the probability to have disagreeing edges decreases. Consequently, increasing $q$ results in an increasingly more polarised network. This is illustrated in Fig.\ \ref{fig-mod2mod} which displays simulated outcomes of empirical graphons for $T=1,2,3$ when $q=12$.

The problem with considering this generalisation of the model is that establishing a functional law of large numbers in the space of graphons becomes infeasible. To explain this, consider the probability that edge $ij$ is active at time $p_{ij}(t)$. Following a reasoning similar to the one used in the derivation of \eqref{eqn:Hdef}, we find that
\begin{align*}
p_{ij}(t) &= \eee^{-t} p_0 + \int_{0}^t {\rm d}s \, \eee^{-s} \Bigg[ \pi_+ \mathbf{1}\{x_i(t-s)=+\} \mathbf{1}\{x_j(t-s)=+\}\\
&\qquad\qquad\qquad\qquad\qquad +\pi_- \mathbf{1}\{x_i(t-s)=-\} \mathbf{1}\{x_j(t-s)=-\} \\
&\quad + \left(\frac{\pi_+ + \pi_-}{2}\right)^q \Big( \mathbf{1}\{x_i(t-s)=+\} \mathbf{1}\{x_j(t-s)=-\} + \mathbf{1}\{x_i(t-s)=-\} \mathbf{1}\{x_j(t-s)=+\}\Big) \Bigg].
\end{align*}
When $q > 1$, this expression for $p_{ij}(t)$ no longer simplifies to an equation that can be written only in terms of the types of $i$ and $j$. Consequently, for those $q$ we are unable to define $H$ as required in Steps 2 and 3 of Section \ref{sec:genstrategy}. This issue appears to be fundamental in the sense that it cannot be solved by changing the definition of $y_i(t)$ and $y_j(t)$ (as given in \eqref{eqn:ydef}). Moreover, it is unclear how to guess what the limit in the space of graphons $g^{[F]}$ might be, even without working in the framework of Section \ref{sec:genstrategy}. In the next section we define a model that displays similar polarisation behavior but where our general framework can still be applied.

%%%%%%%%%%%%%%%%%%%%%%%%%%%%%%%%%%%%%%%%%%%%%%%%%%%%%%%%
\begin{figure}%[htbp]
\begin{center}
\includegraphics[width=5cm]{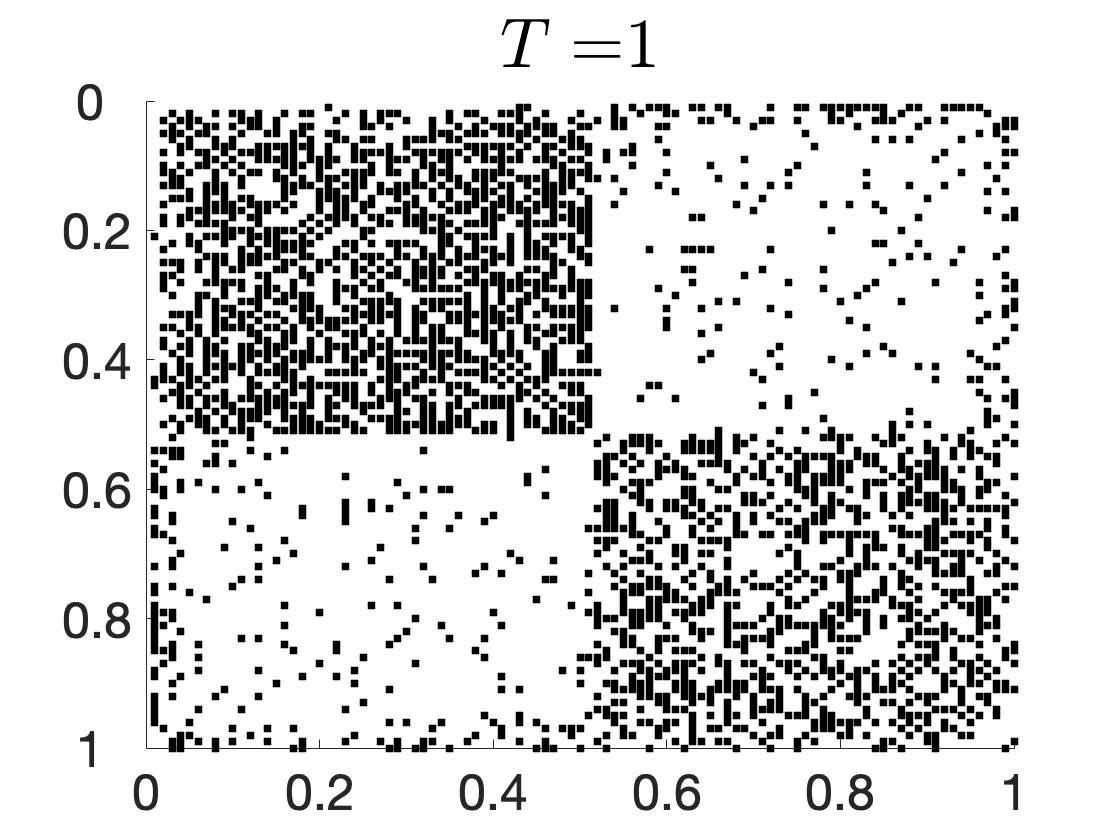}
\includegraphics[width=5cm]{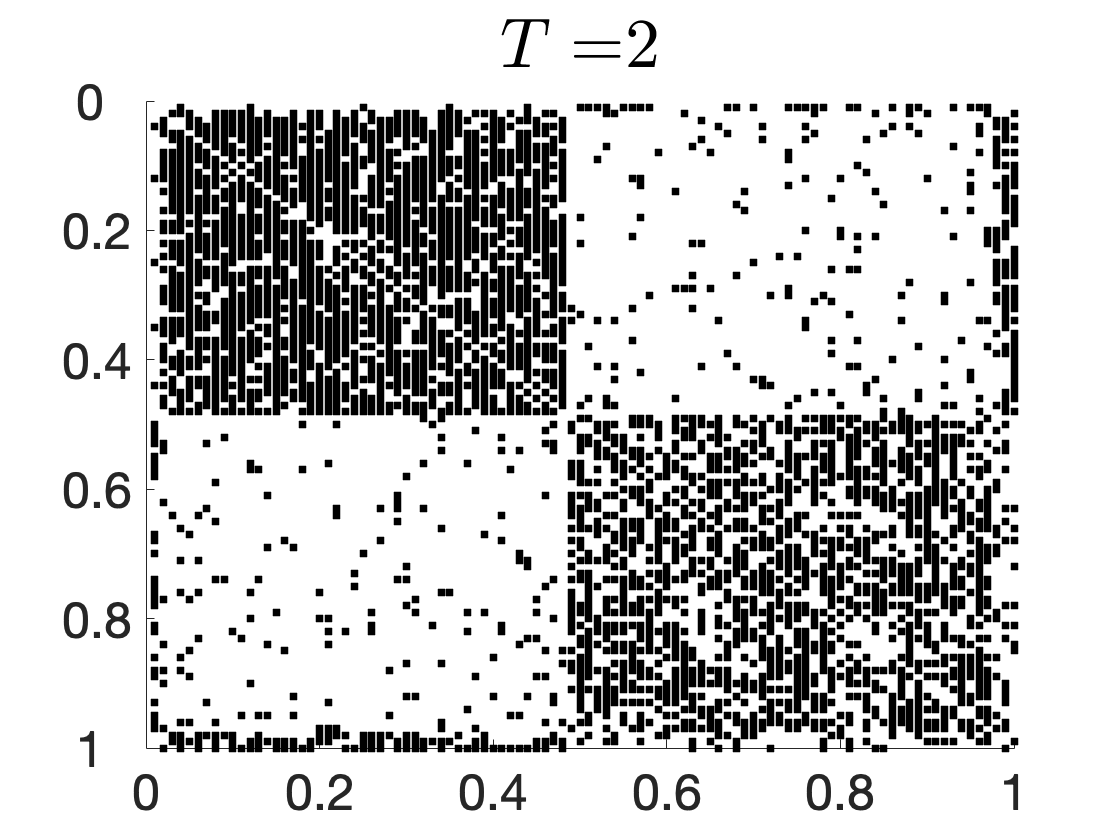}
\includegraphics[width=5cm]{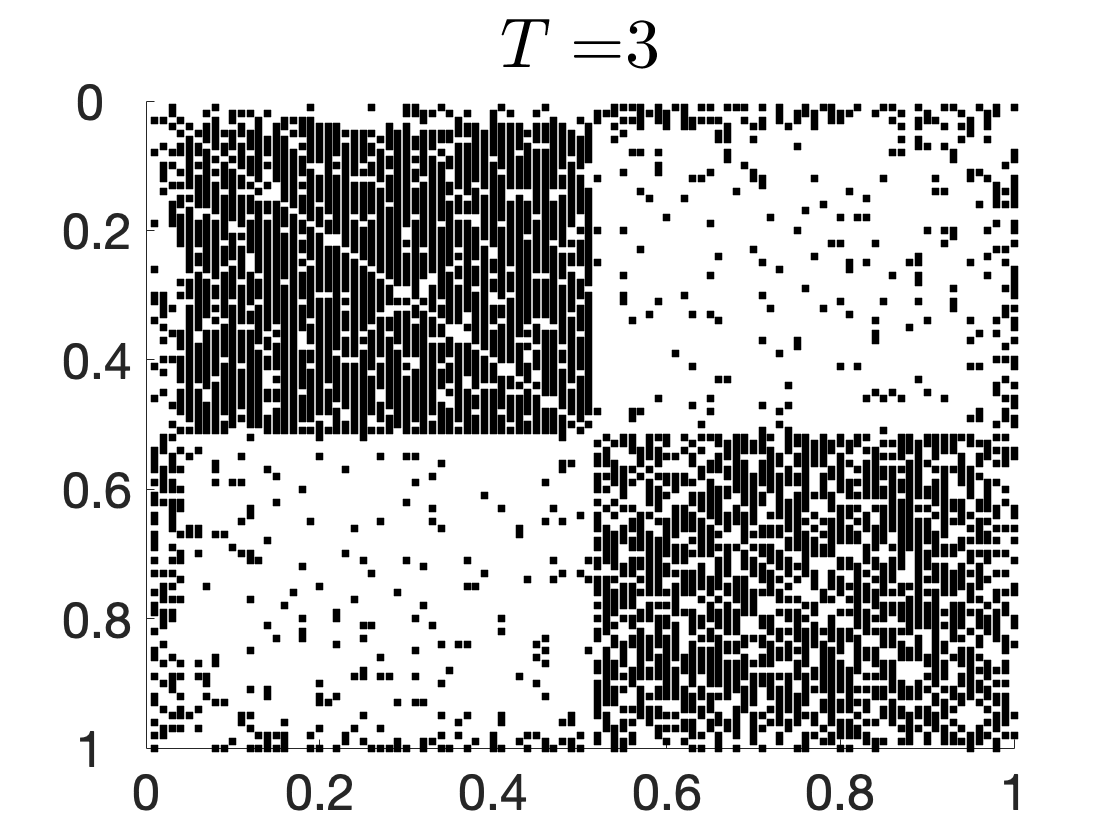}
\caption{\small Empirical graphons for the generalised second model with $n=100$, $p_0=0.05$, $\beta=0.66$, $\pi_+=0.9$, $\pi_-=0.7$ and $q=12$. Initially, half of the vertices hold opinion $+$ and half opinion $-$.}
\label{fig-mod2mod}
\end{center}
\end{figure}
%%%%%%%%%%%%%%%%%%%%%%%%%%%%%%%%%%%%%%%%%%%%%%%%%%%%%%%%%%%%%%%%%%%%

%%%%%%%%%% SECTION 4 %%%%%%%%%%%%%%%%%%%%%%%%%%%%%%%%%%

\section{Third model: two-way feedback and polarisation}
\label{sec:twoway2}

While the second model is a co-evolutionary network, it does not capture the key feature of a voter model on a dynamic network we are after: \textit{vertices holding different opinions are less likely to be connected.} Below we provide one solution to this problem.

Consider $2q$ copies of the second model with $q\geq1$. Of these, $q$ are referred to as $g$-graphs and $q$ as $r$-graphs. All these graphs evolve in parallel, and the $g$-graphs and $r$-graphs have different parameters. The opinions of the vertices are the same in all the copies, but the states of the edges are different. The $2q$ graphs are conditionally independent of each other, given the opinions of the vertices. The true graph, called the {\it resulting graph}, is a function of the $g$-graphs and the $r$-graphs. To be precise, the dynamics of the process is as follows.
\begin{itemize}
\item
{\it Vertex dynamics.} Each vertex holds opinion $+$ or $-$, and is assigned an independent rate-$\beta$ Poisson clock. Each time the clock rings the vertex selects one of its neighbours \textit{in the resulting graph} uniformly at random and copies the opinion of that vertex.
\item
{\it Edge dynamics.} The edge dynamics for $g$-graphs, $r$-graphs and the resulting graph is as follows.

\textbf{$\blacktriangleright$ $g$-graphs and $r$-graphs:} 
For $k\in\{g, r\}$, in every $k$-graph each edge is re-sampled at rate $1$, i.e., attach a rate-$1$ Poisson clock to each edge, and each time the clock rings the edge is active with a probability that depends on the current opinion of the two connected vertices: with probability $\pi_+^k$ if the two vertices hold opinion $+$, with probability $\pi_-^k$ if the two vertices hold opinion $-$, and with probability $\tfrac12(\pi_+^k+\pi_-^k)$ otherwise.
\item[]
\textbf{$\blacktriangleright$ resulting graph:} The \emph{resulting graph} is a function of the $g$-graphs and the $r$-graphs:
\begin{itemize}
\item Edge $ij$ is active in the resulting graph if edge $ij$ is \emph{active in exactly all} the copies of the $k$-graph for one $k\in\{g, r\}$.
\item Edge $ij$ is inactive in the resulting graph in all the other cases.
\end{itemize}
\end{itemize}
The dynamics can be interpreted as follows. An active edge $ij$ in all the $g$-graphs can be interpreted as an individual (either $i$ or $j$) sharing a positive thought about opinion $+$ with the other individual, and an active edge $ij$ in all the $r$-graphs can be interpreted as an individual (either $i$ or $j$) sharing a positive thought about opinion $-$ with the other individual. If $i$ and $j$ share positive thoughts about only one opinion, then they are friends and are therefore connected in the resulting graph, whereas if they share positive thoughts about neither opinion (they do not contact each other) or both opinions (they argue), then they are not friends and are therefore not connected in the resulting graph.

As an illustration, suppose that $\pi_-^r=\pi_+^g=1$ and $\pi_+^r=\pi_-^g=0$. If vertices $i$ and $j$ have both held the opinion $+$ for an infinitely long period of time, then the probability that edge $ij$ is active is~$1$ (the same applies if they have held opinion $-$), whereas if vertex $i$ has held opinion $+$ and vertex~$j$ has held opinion $-$ for an infinitely long period of time, then the probability edge $ij$ is active is~$2\cdot0.5^q$. Consequently, vertices that share the same opinion (generally) have a higher probability of being connected.

By considering the number of copies as a function of the parameter $q$, we make it less likely that individuals holding different opinion are connected. As a result, we obtain different levels of polarisation depending on the value of $q$. In the limit $q\to\infty$, we expect to obtain {\it strong polarisation}, i.e., there are two distinct communities holding a different opinion without active edges in-between them.

Section \ref{sec:prep3} sets notation and steps through the framework in Section \ref{sec:genstrategy}, Section~\ref{mod3:mr} states the main results, Section~\ref{mod3:num} offers simulations, Section~\ref{mod3:pr} provides proofs.

\subsection{Preparations}\label{sec:prep3}

\medskip \noindent
\emph{\underline{Step 1:}} Let the type of vertex $i$ at time $t$ again be given by \eqref{eqn:typedef}, and let
\begin{equation}
\label{eq:alphatilde}
\widetilde\alpha(t;u,\vec{v}(t,\cdot)) = \frac{\int_0^1 {\rm d}y\,f_+(t,y) H_R(t;y,u)}{\int_0^1 {\rm d}y\, [f_+(t,y) + f_-(t,y)]\,H_R(t;y,u)},
\end{equation}
where $H_R(t;\cdot,\cdot)$ is defined in \eqref{eqn:Hthird} below, and $f_+(t,\cdot)$ and $f_-(t,y)$ are the (unique) solution to \eqref{eq:PDEsalt2} below. In what follows we simply write $\widetilde\alpha(t;u,\vec{v})$, instead of  $\widetilde\alpha(t;u,\vec{v}(t,\cdot))$, so as  to lighten the notation. Similar to what we have been doing in \eqref{eq:gen2}, we can define a mimicking process with one-way dependence whose generalised vertex types are independent Markov processes that are characterised via the generator
\begin{equation}
	\label{eq:gen3}
	({\cal L}_t f)(x,y) = \beta((1-\widetilde\alpha(t;y,\vec{v}))\mathbf{1}\{x=+\}+\widetilde\alpha(t;y,\vec{v})\mathbf{1}\{x=-\})[f(x',y)-f(x,y)]
	+ b(x,y) \dfrac{\partial}{\partial y} f(x,y),
\end{equation}
where $x'$ is the opinion of the selected vertex after switching its opinion $x$ at time $t$, and $b(x,\cdot)$ is the drift term when starting from $x\in\{-,+\}$, which has the same shape as for models 1 and 2 (see \eqref{eq:b+}-\eqref{eq:b-} below). 
It leads to the Kolmogorov forward equations
\begin{equation}
\label{eq:PDEsalt2}
\begin{aligned}
\frac{\partial}{\partial t} f_+(t,u) + (1-u) \frac{\partial}{\partial u} f_+(t,u)  
&= \beta f_-(t,u)\,\widetilde\alpha(t;u,\vec{v}) -(\beta(1-\widetilde\alpha(t;u,\vec{v}))-1) f_+(t,u),\\
\frac{\partial}{\partial t} f_-(t,u) - u \frac{\partial}{\partial u} f_-(t,u) 
&= \beta f_+(t,u)\,(1-\widetilde\alpha(t;u,\vec{v})) - (\beta \widetilde\alpha(t;u,\vec{v})-1)f_-(t,u),
\end{aligned}
\end{equation}
which characterise $f_-$ and $f_+$, and therefore $F$.  

\medskip \noindent
\emph{\underline{Steps 2 and 3:}} By the same arguments that led to \eqref{eqn:Hdef}, the probability that an edge between vertices of type $u$ and $v$ is present in a $g$-graph is 
\[
H_g(t;u,v) = p_0 {\rm e}^{-t}+ \tfrac{1}{2} \left[\pi_+^g u 
+  \pi_-^g(1-{\rm e}^{-t} - u) +  \pi_+^g v + \pi_-^g(1-{\rm e}^{-t} - v) \right],
\]
while the probability that an edge between vertices of type $u$ and $v$ is present in a $r$-graph is
\[
H_r(t;u,v) = p_0 {\rm e}^{-t}+ \tfrac{1}{2} \left[\pi_+^r u 
+  \pi_-^r(1-{\rm e}^{-t} - u) +  \pi_+^r v + \pi_-^r(1-{\rm e}^{-t} - v) \right].
\]
Due to conditional independence between the $g$-graph and the $r$-graph (i.e., conditional on the types of the vertices), the probability that there is an edge in the resulting graph between vertices of type $u$ and $v$ is
\begin{equation}
\label{eqn:Hthird}
H_R(t;u,v) = H_g(t;u,v)^q(1-H_r(t;u,v))^q + H_r(t;u,v)^q(1-H_g(t;u,v))^q.
\end{equation}

\medskip \noindent
\emph{\underline{Step 4:}} 
Given that is $F$ characterised by \eqref{eq:PDEsalt2}, $H_R$ is characterised by \eqref{eqn:Hthird}, and recalling \eqref{eqn:invdef}, we obtain the candidate limit
\begin{equation}
\label{eqn:empgrathird}
g^{[F]}(t;x,y) = H_R(t; \bar F(t;x), \bar F(t; y))
\end{equation}
for the empirical graphon of the mimicking process. Similar to how this was done in Section~\ref{sec:twoway1}, we are then to couple the mimicking process and the co-evolutionary model to establish the results below.

%%%

\subsection{Main results: Theorems~\ref{thm:graphonconvthird}--\ref{thm:systempdeasymp3}}
\label{mod3:mr}

The main theorems for the third model are the following.

\begin{theorem}
\label{thm:graphonconvthird}
$h^{G_n} \Rightarrow g^{[F]}$ as $n\to\infty$ in the space $D((\mathcal{W},d_{\square}),[0,T])$, where $h^{G_n}$ is the empirical graphon associated with $G_n$ defined in \eqref{eq:graphon}.
\end{theorem}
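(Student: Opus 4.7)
The plan is to follow Step 4 of the general framework exactly as it was executed in Section \ref{sec:twoway1} for the second model: construct a mimicking process with one-way dependence, establish the functional law of large numbers for it via \cite[Theorem 3.10]{BdHM22pr}, and then couple it to the true co-evolutionary process so that the two are indistinguishable in the graphon topology as $n\to\infty$.

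The mimicking process $(G_n^*(t))_{t\in[0,T]}$ retains the same edge dynamics as the true third model, namely $2q$ conditionally independent copies (indexed as $g$- and $r$-graphs) whose edges carry independent rate-$1$ Poisson clocks and are refreshed with probabilities depending on adjacent opinions through $\pi_\pm^g$ and $\pi_\pm^r$. The vertex dynamics, however, is decoupled from the graph: when vertex $i$'s rate-$\beta$ clock rings, it takes opinion $+$ with probability $\widetilde\alpha(t;y_i^*(t),\vec{v})$ from \eqref{eq:alphatilde} and opinion $-$ otherwise, where $\vec{v}$ is the local analytic solution of \eqref{eq:PDEsalt2} (existence and uniqueness of which follow from the same Cauchy--Kovalevskaya argument used in the proof of Theorem \ref{thm:systempdealt}, since $H_R$ is a polynomial in $H_g,H_r$ and is therefore analytic in all its arguments).

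First I would verify the hypotheses of \cite[Theorem 3.10]{BdHM22pr} for the mimicking process. Because of the one-way dependence, the generalised types $(x_i^*(t),y_i^*(t))_{t\in[0,T]}$ are i.i.d.\ Markov processes with generator \eqref{eq:gen3}, and Assumption~1 follows from the Kolmogorov forward equations \eqref{eq:PDEsalt2} together with standard tightness. Assumption~2 holds because, conditionally on the type process, the edges in the $2q$ underlying copies are independent, so the resulting-graph edges are conditionally independent with connection probability $H_R$ given by \eqref{eqn:Hthird}. Assumption~3 holds because $H_g$ and $H_r$ are continuous, so $H_R$, being a polynomial in them, is also continuous. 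Tightness of the graphon-valued process is obtained exactly as in the proof of Theorem \ref{thm:graphonconv}: each of the $2q$ copies can change only when its own Poisson clock rings, and an edge of the resulting graph can change only when at least one of its $2q$ constituent edges changes, so the number of affected edges in a window $[t,t+\delta]$ is stochastically dominated by a binomial with parameter $1-\eee^{-2q\delta}$, to which the Chernoff bound applies.

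Second I would prove the analog of Lemma \ref{lem:coupling}. Couple the true and mimicking processes by attaching the same Poisson clocks to each vertex and to each edge in each of the $2q$ copies, and use a common $\mathrm{Unif}(0,1)$ variable at each ring. Edge discrepancies are controlled exactly as in Lemma \ref{lem:coupling}: when an edge clock rings in any copy, a new discrepancy can be created only if the opinions at its endpoints differ between the two processes, and the number of affected resulting-graph edges is at most $2q$ times the number of affected copy edges, so all constants merely pick up a factor $2q$. Vertex discrepancies are controlled through the same triangle-inequality decomposition
\[
\frac{1}{n}\sum_{i=1}^n\left|\frac{N_i^+(s)}{N_i(s)}-\widetilde\alpha(s;y_i^*(s),\vec{v})\right|
\leq \frac{1}{n}\sum_{i=1}^n\left|\frac{N_i^+(s)}{N_i(s)}-\frac{N_i^{*+}(s)}{N_i^*(s)}\right|+\frac{1}{n}\sum_{i=1}^n\left|\frac{N_i^{*+}(s)}{N_i^*(s)}-\widetilde\alpha(s;y_i^*(s),\vec{v})\right|,
\]
with $N_i^{(*)}(s)$ now referring to degrees in the resulting graph. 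The first term is bounded in terms of the edge discrepancy $d_E$, and the second is bounded by a $\square$-norm distance between the empirical graphon of the resulting graph and $g^{[F]}$ defined through $H_R$. The Gronwall-type iteration from the proof of Lemma \ref{lem:coupling} (together with its inductive companion) then shows that $d_V(T)=o(n)$ and $d_E(T)=o(n^2)$ with probability tending to one.

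The main obstacle I anticipate is the degree lower-bound step, namely the analog of Lemma \ref{lem:setN}: one needs a uniform lower bound on $H_R$ so that the set $\mathcal{N}_\ell$ of low-degree vertices is empty with high probability for suitable $\ell>0$. This is where the structure of $H_R=H_g^q(1-H_r)^q+H_r^q(1-H_g)^q$ matters. Provided $\pi_\pm^g,\pi_\pm^r\in(0,1)$ and $p_0\in(0,1)$, both $H_g(t;u,v)$ and $H_r(t;u,v)$ lie in a compact subset of $(0,1)$ uniformly in $(t,u,v)\in[0,T]\times[0,1]^2$, and consequently $H_R$ is bounded away from zero and one. With this nondegeneracy in hand, the remaining computations transfer verbatim from Section \ref{mod2:pr}, and combining the functional law of large numbers for the mimicking process with the coupling estimate gives $h^{G_n}\Rightarrow g^{[F]}$ in $D((\mathcal{W},d_{\square}),[0,T])$ via \cite[Theorem 3.10]{BdHM22pr}, completing the proof.
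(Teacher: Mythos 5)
Your proposal is correct and follows essentially the same route as the paper: the paper's proof of Theorem \ref{thm:graphonconvthird} likewise just defines a mimicking process with vertex updates governed by $\widetilde\alpha$ from \eqref{eq:alphatilde}, describes the clock-and-uniform coupling, and declares that the arguments of Section \ref{sec:coupling} (Lemmas \ref{lem:coupling}--\ref{lem:mimproc} and \cite[Theorem 3.10]{BdHM22pr}) carry over, so your more detailed treatment of tightness, the Gronwall iteration, and the degree lower bound only fills in what the paper leaves implicit. The single point of divergence is the mimicking edge dynamics: you retain the $2q$ conditionally independent copies with their own rate-$1$ clocks (so the conditional connection probability is exactly $H_R$ of \eqref{eqn:Hthird}), whereas the paper resamples the resulting edge directly at rate $1$ with a combined acceptance probability $2\bigl[\tfrac14(p(x^*_i)+p(x^*_j))(2-p(x^*_i)-p(x^*_j))\bigr]^q$; your choice is the one that is literally consistent with the derivation of $H_R$ in Step 2--3 of Section \ref{sec:prep3}.
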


Recall the definition of the vector of densities $\vec{v}(t,u)$ given in \eqref{eq.vecf}. In line with what we did for model 2, in the next theorem we state the (local) existence of the densities $f_+(t,u)$ and $f_-(t,u)$, which satisfy the Kolmogorov forward equations for the mimicking process defined in Section \ref{sec:mim2} below. 

\begin{theorem}
\label{thm:systempdethird}
For every $t\in[0,T]$ and $u\in[0,1]$, there exists a unique analytic vector of densities $\vec{v}(t,u)$, i.e., for any $u\in(0,1)$ there exists an open set $O_u$ containing $u$ such that there is a unique analytic vector $\vec{v}(t,u)$ on $O_u$ satisfying $\vec{v}(0,u)=(f_+(0,u),f_-(0,u))^\top$ for any $u\in O_u$.
\end{theorem}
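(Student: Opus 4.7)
My plan is to mirror the strategy used in the proof of Theorem~\ref{thm:systempdealt} (see Section~\ref{sec:pdemodel2}): reduce the claim to an application of the Cauchy–Kovalevskaya theorem \cite{K1875}. First I would rewrite the system \eqref{eq:PDEsalt2} in vector form
\[
\dfrac{\partial}{\partial t} \vec{v}(t,u) = M(u) \dfrac{\partial}{\partial u} \vec{v}(t,u) + \widetilde N(t;u,\vec{v})\,\vec{v}(t,u),
\]
where $M(u)$ is as in \eqref{eq:defM} and
\[
\widetilde N(t;u,\vec{v}) = \left(\begin{matrix} 1-\beta(1-\widetilde\alpha(t;u,\vec{v})) & \beta\,\widetilde\alpha(t;u,\vec{v}) \\ \beta(1-\widetilde\alpha(t;u,\vec{v})) & 1-\beta\,\widetilde\alpha(t;u,\vec{v}) \end{matrix}\right).
\]
The entries of $M$ are polynomials in $u$, hence analytic, so the only nontrivial hypothesis of Cauchy–Kovalevskaya is the analyticity of the entries of $\widetilde N$, which reduces to the analyticity of $\widetilde\alpha(t;u,\vec{v})$ as a functional of analytic $\vec{v}$.

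The next step is to show that the kernel $H_R(t;u,v)$ defined in \eqref{eqn:Hthird} is analytic in $(t,u,v)$. Both $H_g(t;u,v)$ and $H_r(t;u,v)$ are linear in $u$ and $v$ and depend exponentially on $t$, so they are analytic on $[0,T]\times[0,1]^2$. Since finite sums, products and integer powers preserve analyticity, $H_R$—a sum of two terms of the form $(\cdot)^q(1-\cdot)^q$—is analytic as well. Consequently, for any analytic candidate $\vec{v}(t,\cdot)$, both the numerator $\int_0^1 \dd y\,f_+(t,y) H_R(t;y,u)$ and the denominator $\int_0^1 \dd y\,[f_+(t,y)+f_-(t,y)] H_R(t;y,u)$ defining $\widetilde\alpha$ in \eqref{eq:alphatilde} are analytic in $(t,u)$, because integration against an analytic kernel in a parameter preserves analyticity.

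It therefore remains to verify that the denominator is bounded away from zero on a neighbourhood of $t=0$, so that $\widetilde\alpha$ itself is analytic. At $t=0$ we have $H_g(0;y,u)=H_r(0;y,u)=p_0\in(0,1)$, hence $H_R(0;y,u) = 2 p_0^q(1-p_0)^q > 0$; combined with $\int_0^1 \dd y\,[f_+(0,y)+f_-(0,y)]=1$, this gives denominator $2p_0^q(1-p_0)^q > 0$ at $t=0$. By continuity of $H_R$ and of $\vec{v}$ in $(t,u)$, the denominator stays strictly positive on a neighbourhood of each $(0,u)$ with $u\in(0,1)$, so $\widetilde\alpha$ (and thus $\widetilde N$) is analytic there. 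Cauchy–Kovalevskaya then produces, for every $u\in(0,1)$, an open set $O_u\ni u$ on which there exists a unique analytic $\vec{v}(t,u)$ satisfying $\vec{v}(0,u)=(f_+(0,u),f_-(0,u))^\top$, which is exactly the claim.

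The main subtlety—relative to Section~\ref{sec:pdemodel2}—is the nonlinearity introduced by the exponent $q$ in $H_R$: it turns $H_R$ from a function linear in $u,v$ into a polynomial of degree $2q$, so one must be slightly more careful when invoking analyticity of the associated integrals and positivity of the denominator. However, because $H_g$ and $H_r$ remain jointly analytic and stay in $(0,1)$ for small $t$, these difficulties are resolved by the continuity argument above, and no new conceptual ingredient beyond Cauchy–Kovalevskaya is needed.
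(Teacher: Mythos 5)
Your proposal is correct and follows essentially the same route as the paper: the paper's proof of Theorem~\ref{thm:systempdethird} simply refers back to the Cauchy--Kovalevskaya argument of Section~\ref{sec:pdemodel2}, i.e.\ vector form of \eqref{eq:PDEsalt2} plus analyticity of $\widetilde\alpha$ via analyticity of $H_R$. Your additional check that the denominator of $\widetilde\alpha$ stays bounded away from zero near $t=0$ is a detail the paper leaves implicit, and is a welcome supplement rather than a deviation.
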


\begin{theorem}
\label{thm:systempdeasymp3}
For any $u\in[0,1]$, the following statements hold.
\begin{itemize}
	\item[(i)] Let $p_+$ be as in \eqref{eq:limprop}.
	If $\pi^g_+=\pi^g_-$ and $\pi^r_+=\pi^r_-$, then
	\begin{equation}\label{eq:limdensities3bis}
		\lim_{t\to\infty}\vec{v}(t,u) := \left(\begin{matrix} f_+(\infty,u) \\ f_-(\infty,u)\end{matrix}\right)
		= \left(\begin{matrix} u  \\ 1-u \end{matrix}\right) f_{\beta p_+, \beta (1-p_+)} (u),
	\end{equation}
	where $f_{\beta p_+, \beta (1-p_+)}$ is the density corresponding to a ${\rm Beta}(\beta p_+,\beta (1-p_+))$ random variable, i.e.,
	\[
	f_{\beta p_+, \beta (1-p_+)} (u) = u^{\beta p_+-1} (1-u)^{\beta (1-p_+)-1}\mathbf{1}\{u\in[0,1]\}.
	\]
	\item[(ii)] If both
	\begin{itemize}
		\item[(A)] $\pi^g_+\neq\pi^g_-$ or $\pi^r_+\neq\pi^r_-$
		\item[(B)] $\pi^g_+\neq\pi^r_-$ or $\pi^r_+\neq\pi^g_-$
	\end{itemize} 
    hold, then only consensus is admissible, i.e., either
	\begin{equation}\label{eq:mod3cons1}
		\lim_{t\to\infty}\vec{v}(t,u) := \left(\begin{matrix} f_+(\infty,u) \\ f_-(\infty,u)\end{matrix}\right)
		= \left(\begin{matrix} \delta_1(u)  \\ 0 \end{matrix}\right),
	\end{equation}
	or
	\begin{equation}\label{eq:mod3cons2}
		\lim_{t\to\infty}\vec{v}(t,u) := \left(\begin{matrix} f_+(\infty,u) \\ f_-(\infty,u)\end{matrix}\right)
		= \left(\begin{matrix} 0  \\ \delta_0(u) \end{matrix}\right),
	\end{equation}
	where $\delta_{x_0}(\cdot)$ is the point--mass distribution at $x_0$. 
	\end{itemize}
	\end{theorem}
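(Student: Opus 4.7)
The overall strategy parallels the proof of Theorem~\ref{thm:systempdeasymp2}: for each case I verify that the claimed densities satisfy the stationary equation
\[
M(u)\frac{\partial}{\partial u}\vec{v}(\infty,u) + N(\infty;u,\vec{v})\vec{v}(\infty,u) = 0
\]
obtained by setting the time derivative equal to zero in \eqref{eq:PDEsalt2}, and then rule out any other analytic solutions by Taylor-expanding both densities in $u$ and analysing the resulting recurrences for the coefficients.

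For part (i), when $\pi^g_+ = \pi^g_-$ and $\pi^r_+ = \pi^r_-$, both $H_g(\infty;y,u)$ and $H_r(\infty;y,u)$ reduce to constants in $(y,u)$, so that $H_R(\infty;y,u)$ is also constant. Plugging into \eqref{eq:alphatilde} I would immediately obtain $\widetilde\alpha(\infty;u,\vec{v}) = \int_0^1 f_+(\infty,y)\,{\rm d}y = p_+$, independent of $u$. With this simplification the stationary equation takes the same form as \eqref{eq:mod2asymp} in the symmetric case $\pi_+=\pi_-$, so the verification that the densities in \eqref{eq:limdensities3bis} satisfy it, together with the uniqueness argument based on the recurrences \eqref{eq:systmod2zeroas}--\eqref{eq:systmod2as}, can be transcribed essentially verbatim from the proof of Theorem~\ref{thm:systempdeasymp2}(i).

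For part (ii), the consensus solutions \eqref{eq:mod3cons1}--\eqref{eq:mod3cons2} correspond respectively to $\widetilde\alpha(\infty;u,\vec{v}) \equiv 1$ and $\widetilde\alpha(\infty;u,\vec{v}) \equiv 0$, and trivially satisfy the stationary equation. The substantive task is to exclude every other analytic equilibrium. Expanding \eqref{eqn:Hthird} binomially, $H_R(\infty;y,u)$ is a polynomial of degree at most $2q$ in each of $y$ and $u$, so after substituting $f_\pm(\infty,u) = \sum_{k\geq 0} f_{\pm,k}u^k$ the function $\widetilde\alpha(\infty;u,\vec{v})$ becomes a rational function of $u$ whose numerator and denominator are polynomials of degree at most $2q$ with coefficients depending linearly on the moments $m^\pm_i = \int_0^1 y^i f_\pm(\infty,y)\,{\rm d}y$ for $i \leq 2q$. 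Clearing denominators in the stationary equation and matching coefficients of each power of $u$ then produces a closed recurrence for $(f_{+,k}, f_{-,k})$, to be analysed in the spirit of \eqref{eq:systmod2zeroas}--\eqref{eq:systmod2as}.

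The main obstacle is closing this argument under the exact conditions (A) and (B). Condition (A) prevents $H_R(\infty;\cdot,\cdot)$ from being constant in $(y,u)$ (which would put us back in part (i)), while condition (B) rules out the $(+,-) \leftrightarrow (-,+)$ symmetry that could otherwise allow balanced internal equilibria of Beta type. My plan is to reduce the problem to showing that under (A)+(B) the moments $m^\pm_i$ cannot satisfy the algebraic constraints that would make $\widetilde\alpha(\infty;u,\vec{v})$ constant in $u$, and that any non-constant $\widetilde\alpha$ is incompatible with a non-degenerate analytic equilibrium. The latter should follow from inspecting the leading- and subleading-order coefficients of the polynomial identity obtained after clearing denominators: under (A)+(B) these should force either $f_+(\infty,\cdot) \equiv 0$ or $f_-(\infty,\cdot) \equiv 0$, after which the explicit point-mass form of the surviving density is recovered by substituting back into the stationary equation. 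The coefficient bookkeeping, complicated by the potentially high degree $2q$ of $H_R$ and the need to keep track of both densities and their moments simultaneously, is the most delicate part of the argument.
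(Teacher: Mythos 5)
Your proposal follows essentially the same route as the paper: part (i) is handled exactly as the paper does it, by observing that the symmetry assumptions make $H_R(\infty;\cdot,\cdot)$ constant so that $\widetilde\alpha(\infty;u,\vec{v})=p_+$, after which the verification of the stationary equation and the power-series uniqueness argument are transcribed from Theorem~\ref{thm:systempdeasymp2}(i). For part (ii) the paper itself offers nothing beyond ``argue as in the proof of Theorem~\ref{thm:systempdeasymp2}(ii)'', so your admittedly unfinished coefficient-matching plan for the degree-$2q$ polynomial $H_R$ is, if anything, more explicit about where the real work lies than the paper's own proof.
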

Theorems \ref{thm:graphonconvthird}-\ref{thm:systempdeasymp3} are the analogues of Theorems \ref{thm:graphonconvalt}-\ref{thm:systempdeasymp2}, and show once again the challenges we are facing in the co-evolutionary setting. Our result provides insight into the structure of the evolution for large $t$, even though it remains a major challenge to give a complete picture of the limiting behaviour, as shown by the following conjecture.

\begin{conjecture}\label{conjecture}
	For any $u\in[0,1]$, the following statements hold for choices of the parameters not covered by Theorem \ref{thm:systempdeasymp3}\it{(i)}.
	\begin{itemize}
	\item[(i)] If $\pi^g_+=\pi^r_-$ and $\pi^r_+=\pi^g_-$, then for any $q\in \mathbb{N}$ there exist unique limiting densities $f_+$ and $f_-$ corresponding to consensus in \eqref{eq:mod3cons1}-\eqref{eq:mod3cons2}, or there are different levels of polarisation depending on the value of $q$ (see Figures \ref{fig1-mod3}-\ref{fig3-mod3}), i.e., 
	\[
	\int_0^1 \dd u\, f_+(\infty,u) = \int_0^1 \dd u \,f_-(\infty,u) = \dfrac{1}{2},
	\]
	and the density of edges connecting vertices having different opinions is a non-zero monotone decreasing function in $q$.
	\item[(ii)] If $\pi^g_+=\pi^r_-$ and $\pi^r_+=\pi^g_-$, then in the limit $q\to\infty$ only consensus occurs as in \eqref{eq:mod3cons1}-\eqref{eq:mod3cons2}, or strong polarisation is admissible, i.e.,
	\[
\lim_{q\to\infty}\lim_{t\to\infty}\vec{v}(t,u)
	= \dfrac{1}{2} \left(\begin{matrix} \delta_1(u)  \\ \delta_0(u) \end{matrix}\right),
	\]
	and the density of edges connecting vertices having different opinions is zero. Moreover, if $\pi^g_+,\pi^r_-,\pi^r_+,\pi^g_-\in(0,1)$, then also the density of edges connecting vertices having the same opinion tends to zero as $q\to\infty$, but slower than the density of disagreeing edges. 
\end{itemize}
\end{conjecture}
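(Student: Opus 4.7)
The plan is to exploit the $\mathbb{Z}_2$-symmetry induced by the hypotheses $\pi^g_+=\pi^r_-=:a$ and $\pi^r_+=\pi^g_-=:b$. A direct computation from the explicit expressions for $H_g,H_r$ and from \eqref{eqn:Hthird} yields $H_g(\infty;1-y,1-u)=H_r(\infty;y,u)$ and consequently $H_R(\infty;1-y,1-u)=H_R(\infty;y,u)$. This makes the involution $\mathcal{I}\colon\,(f_+(\cdot),f_-(\cdot))\mapsto(f_-(1-\cdot),f_+(1-\cdot))$ a symmetry of the stationary version of \eqref{eq:PDEsalt2}, i.e., of
\[
M(u)\,\partial_u\vec{v}(\infty,u)+\widetilde{N}(\infty;u,\vec{v})\,\vec{v}(\infty,u)=0,
\]
where $\widetilde{N}$ is defined as in \eqref{eq:matrixN} with $\alpha$ replaced by $\widetilde\alpha$. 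Any stationary density is therefore either fixed by $\mathcal{I}$, in which case $\int_0^1 f_+(\infty,u)\,\mathrm{d}u=\int_0^1 f_-(\infty,u)\,\mathrm{d}u=\tfrac12$, or comes in an $\mathcal{I}$-related pair; the consensus densities \eqref{eq:mod3cons1}--\eqref{eq:mod3cons2} form such a pair.

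For part (i), I would substitute the $\mathcal{I}$-symmetric ansatz $f_+(\infty,u)=f_-(\infty,1-u)$ into the stationary system and repeat the Taylor-coefficient recursion used in the proof of Theorem~\ref{thm:systempdeasymp2}(i), suitably modified to handle the non-constant $\widetilde\alpha$ that arises here through the polynomial structure of $H_R$ in $u+v$ of degree $2q$. This should reduce the uniqueness question to solvability of a single scalar recursion whose coefficients depend on $q$; reading off whether a non-trivial polarised $\mathcal{I}$-symmetric solution exists in addition to the consensus pair then gives the dichotomy stated in the conjecture. For the monotonicity of the disagreeing-edge density in $q$, observe first that for each fixed $(u,v)$ with $H_g(\infty;u,v),H_r(\infty;u,v)\in(0,1)$,
\[
H_R(\infty;u,v)=\bigl[H_g(\infty;u,v)(1-H_r(\infty;u,v))\bigr]^q+\bigl[H_r(\infty;u,v)(1-H_g(\infty;u,v))\bigr]^q
\]
is a sum of two terms of the form $x^q$ with $x\in(0,1)$, hence strictly decreasing in $q$. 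At the polarised concentration point $(u,v)=(1,0)$ the symmetry forces $H_g(\infty;1,0)=H_r(\infty;1,0)=(a+b)/2$, so $H_R(\infty;1,0)=2[(a+b)(2-a-b)/4]^q$, and the claimed monotonicity of the integrated density follows once the $q$-dependence of $f_\pm(\infty,\cdot)$ is controlled via the same scalar recursion.

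For part (ii), I would analyse the limit $q\to\infty$ directly. For agreeing pairs $(u,v)=(1,1)$ or $(0,0)$ the relevant decay rate is $\max\{a(1-b),b(1-a)\}$, whereas for the disagreeing pair $(1,0)$ it equals $(a+b)(2-a-b)/4$. A short algebraic manipulation yields
\[
4a(1-b)-(a+b)(2-a-b)=(a-b)(a-b+2),
\]
together with the analogous identity obtained by swapping $a$ and $b$, so that $\max\{a(1-b),b(1-a)\}>(a+b)(2-a-b)/4$ whenever $a\neq b$. Consequently the ratio of disagreeing to agreeing edge probabilities decays exponentially in $q$, which forces the stationary mass to concentrate at the endpoints $\{0,1\}$. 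Substituting the ansatz $\vec{v}(\infty,u)=\tfrac12(\delta_1(u),\delta_0(u))^\top$ and verifying consistency with the $q\to\infty$ degeneration of the stationary system then identifies the strong-polarisation limit; the same rate estimates yield the claimed slower decay of the agreeing-edge density whenever $a,b\in(0,1)$.

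The main obstacle will be proving \emph{global} uniqueness of the non-consensus stationary density at finite $q$, and locating the precise critical value $q^*$ at which the polarised branch bifurcates from the consensus state. The stationary equation is a nonlinear integro-differential system because $\widetilde\alpha(\infty;u,\vec{v})$ depends on $\vec{v}$ through integrals involving $H_R$, and the Cauchy--Kovalevskaya argument underlying Theorem~\ref{thm:systempdethird} provides only local analytic uniqueness. Establishing the full bifurcation picture appears to require a spectral analysis of the linearisation around the $\mathcal{I}$-symmetric state, complicated by the degeneracy of the drift matrix $M(u)$ at $u\in\{0,1\}$; these boundary degeneracies are precisely where the polarised mass concentrates in the $q\to\infty$ limit, so they cannot be removed by a straightforward cut-off, and this is where I expect the genuinely difficult analytic work to be required.
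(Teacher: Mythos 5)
The statement you are trying to prove is labelled a \emph{Conjecture} in the paper: the authors explicitly present it as an open problem (``it remains a major challenge to give a complete picture of the limiting behaviour, as shown by the following conjecture'') and support it only with simulations (Figures \ref{fig1-mod3}--\ref{fig3-mod3}) and the heuristic rate computation in the illustration with $\pi^r_-=\pi^g_+=1$, $\pi^r_+=\pi^g_-=0$. There is therefore no proof in the paper to compare against, and the relevant question is whether your proposal actually closes the conjecture. It does not; it is a programme, and you say so yourself in your final paragraph. That said, several of your supporting computations are correct and genuinely useful: the $\mathbb{Z}_2$-symmetry $H_R(\infty;1-y,1-u)=H_R(\infty;y,u)$ under $\pi^g_+=\pi^r_-=a$, $\pi^r_+=\pi^g_-=b$ does hold, the induced involution $\mathcal{I}$ is a symmetry of the stationary system (one checks $\widetilde\alpha(\infty;1-u,\mathcal{I}\vec{v})=1-\widetilde\alpha(\infty;u,\vec{v})$), and this cleanly explains why any $\mathcal{I}$-fixed stationary state must have $\int_0^1 f_+(\infty,u)\,\dd u=\tfrac12$. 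The identity $4a(1-b)-(a+b)(2-a-b)=(a-b)(a-b+2)$ is also correct and quantifies, at the level of edge probabilities, why disagreeing edges are exponentially rarer in $q$ than agreeing edges when $a\neq b$, generalising the paper's own illustrative example.

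The genuine gaps are the following. First, for part (i) you propose to rerun the Taylor-coefficient recursion from the proof of Theorem \ref{thm:systempdeasymp2}(i), but that recursion closed there only because $\pi_+=\pi_-$ forced $A_{+,0}=A_{-,0}=0$ and made the system triangular; here $H_R(\infty;\cdot,\cdot)$ is a polynomial of degree $2q$ in each argument, so $\widetilde\alpha(\infty;u,\vec{v})$ is a ratio of degree-$2q$ polynomials whose coefficients involve the first $2q$ moments of the unknown densities, and it is not shown (and not obvious) that the resulting coefficient relations determine $f_{\pm}$ from finitely many free constants, let alone that they yield the claimed dichotomy between consensus and a unique polarised branch. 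Second, the monotonicity in $q$ of the disagreeing-edge density does not follow from pointwise monotonicity of $x\mapsto x^q$: the quantity of interest is an integral of $H_R$ against $f_+(\infty,\cdot)\otimes f_-(\infty,\cdot)$, and the densities themselves depend on $q$; you flag this but offer no control. Third, for part (ii) the exponential separation of edge-probability rates does not by itself force the stationary type distribution to concentrate as $\tfrac12(\delta_1+\delta_0)$: one must rule out consensus, rule out other accumulation points, and justify the interchange of the limits $t\to\infty$ and $q\to\infty$; moreover the limiting point masses lie outside the analytic-density framework in which the stationary equation (and the Cauchy--Kovalevskaya existence result of Theorem \ref{thm:systempdethird}) is posed, precisely because of the degeneracy of $M(u)$ at $u\in\{0,1\}$ that you correctly identify as the hard point. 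In short: your symmetry reduction and rate estimates are sound and would be a reasonable first section of an eventual proof, but the conjecture remains open after your argument.
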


%%%%%%%%%%%%%%%%%%%%%%%%%%%%%%%%%%%%%%%%%%%%%%%%%%%%%%%%%%
\begin{figure}
	\includegraphics[width=5cm]{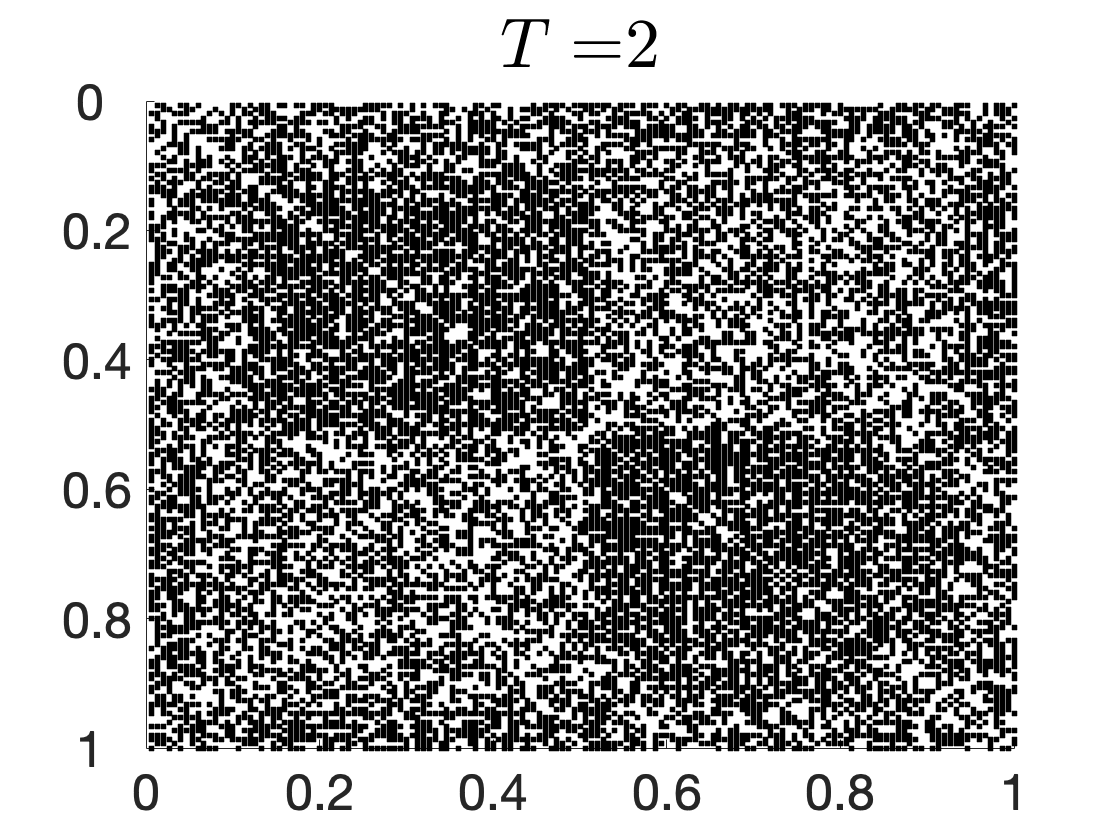}
	\includegraphics[width=5cm]{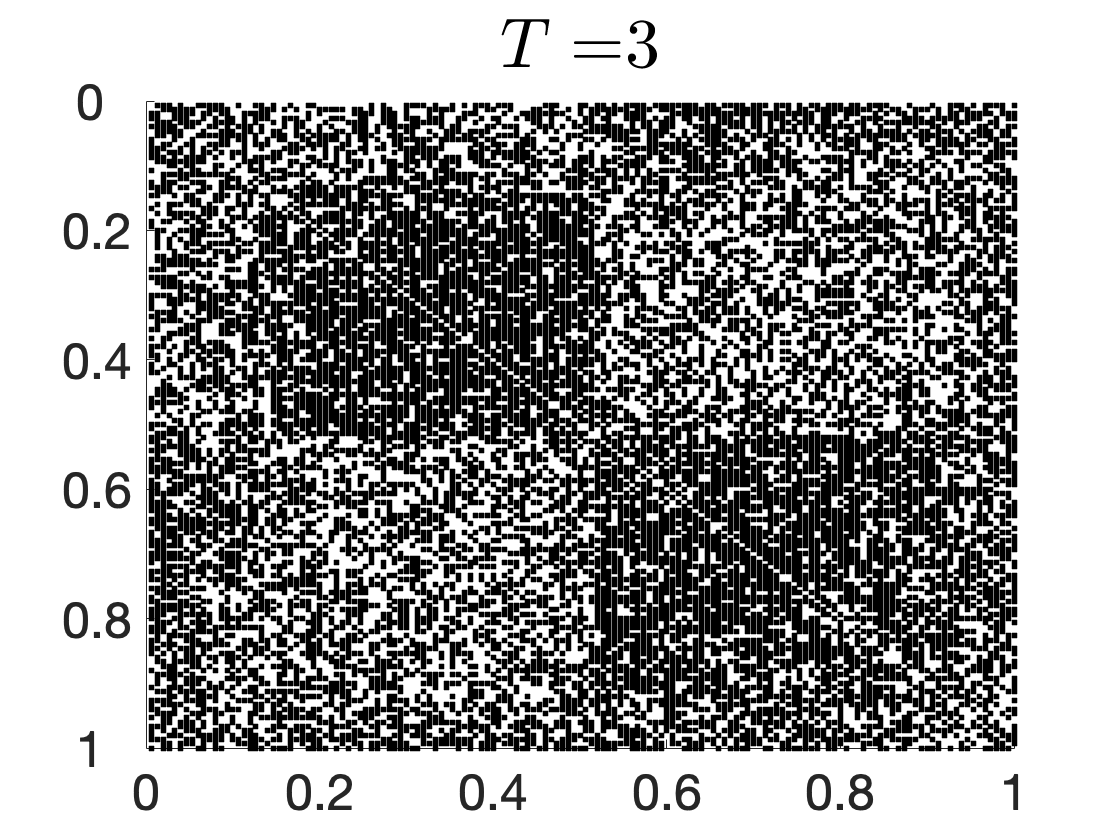}
	\includegraphics[width=5cm]{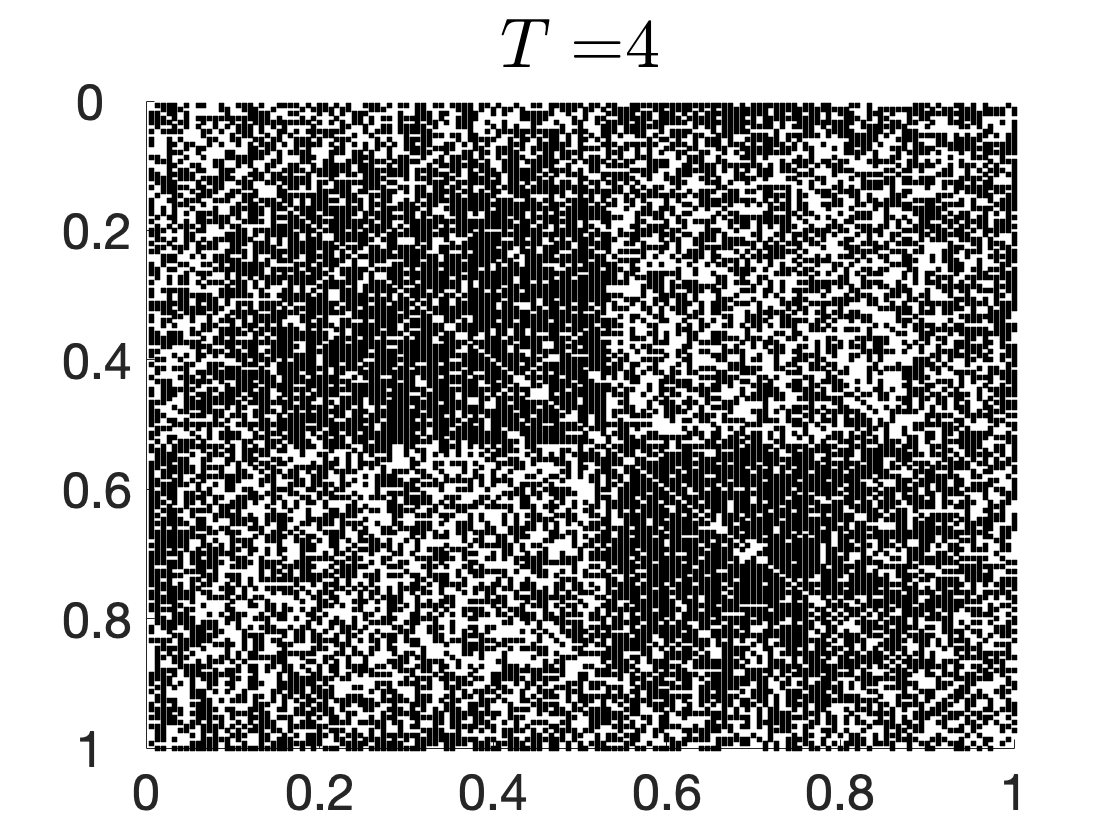}

	\includegraphics[width=5cm]{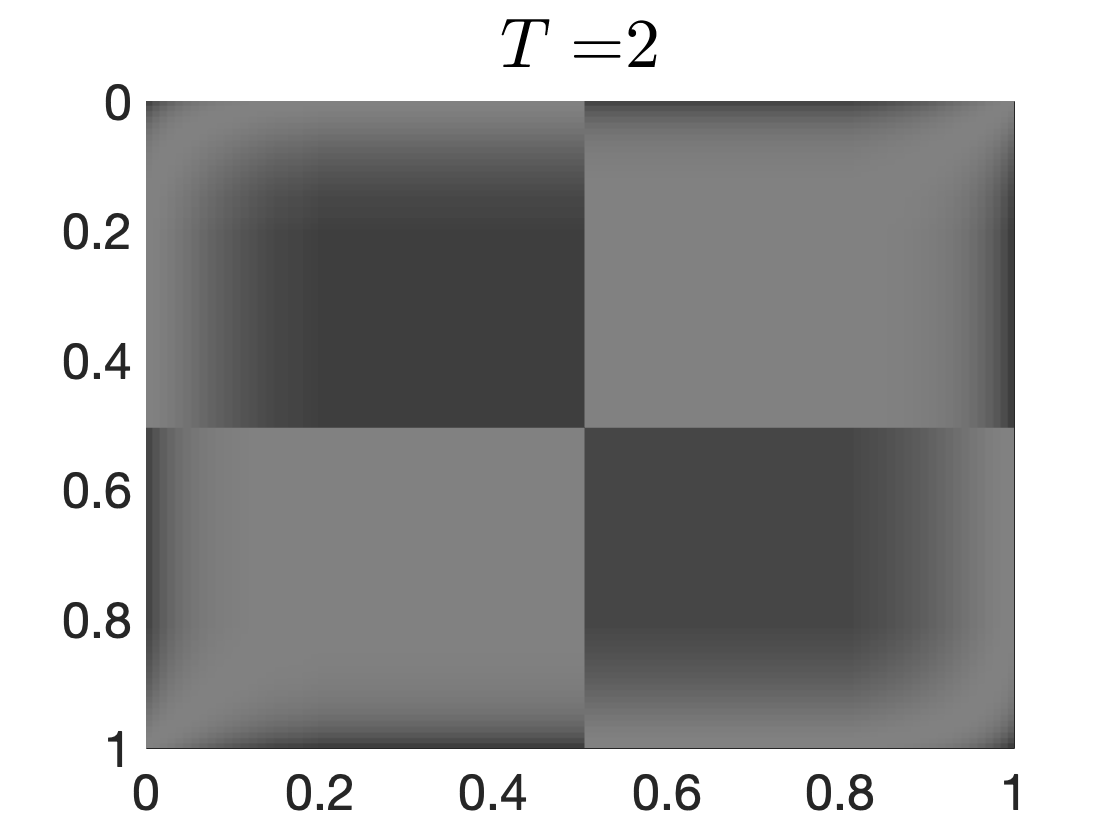}
	\includegraphics[width=5cm]{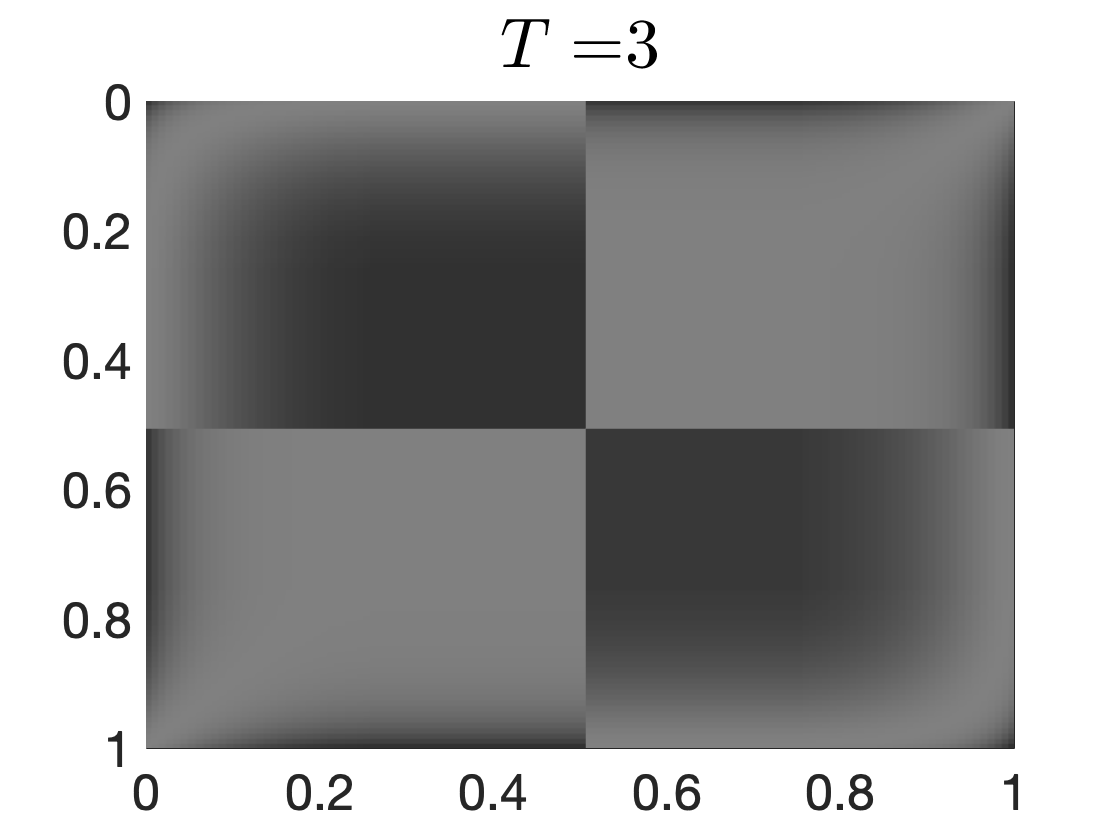}
	\includegraphics[width=5cm]{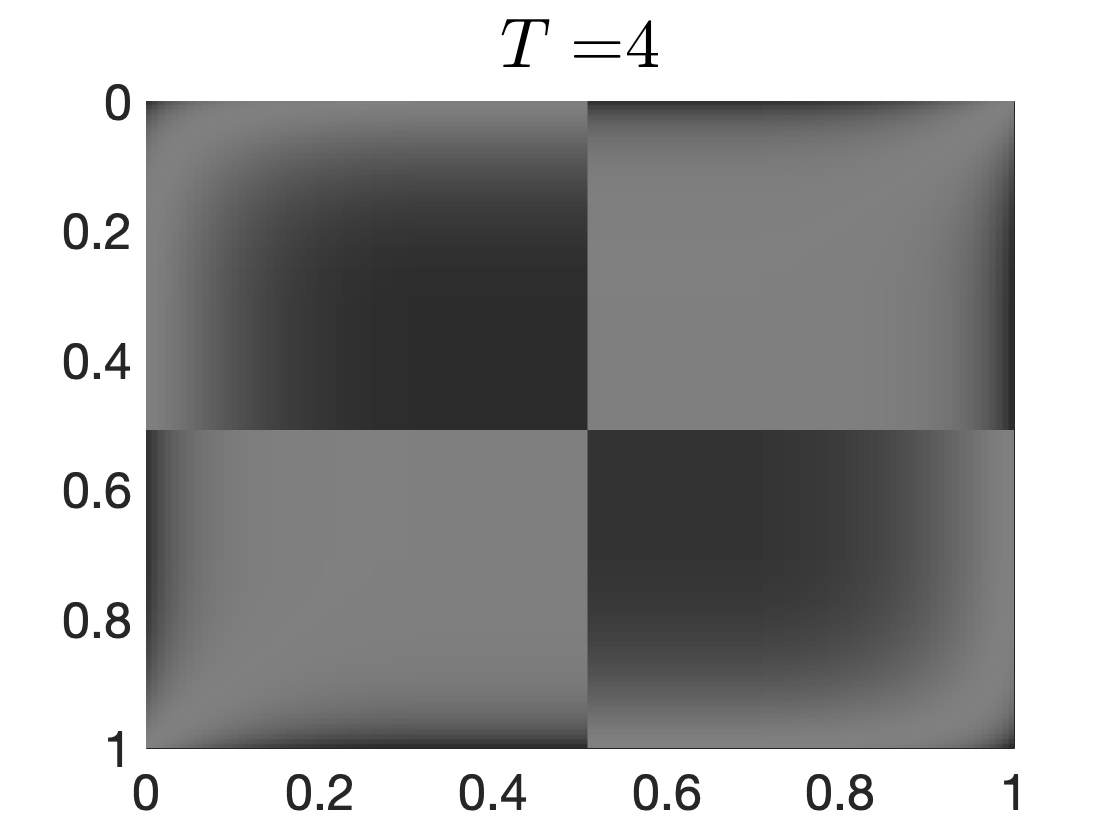}
	\caption{\small The top row displays the empirical graphon when $n=150$, $q=1$ and $T=2,3,4$, the bottom row displays the corresponding functional law of large numbers. Simulations are based on a single run. A dot represents an edge. The labels of the vertices are updated dynamically so that they are ordered lexicographically, i.e., the vertices with opinion $+$ have lower labels than the vertices with opinion $-$, and then by increasing type.}
	\label{fig1-mod3}
\end{figure}
%%%%%%%%%%%%%%%%%%%%%%%%%%%%%%%%%%%%%%%%%%%%%%%%%%%%%%%%%%%

%%%%%%%%%%%%%%%%%%%%%%%%%%%%%%%%%%%%%%%%%%%%%%%%%%%%%%%%%%
\begin{figure}[h!]
	\includegraphics[width=5cm]{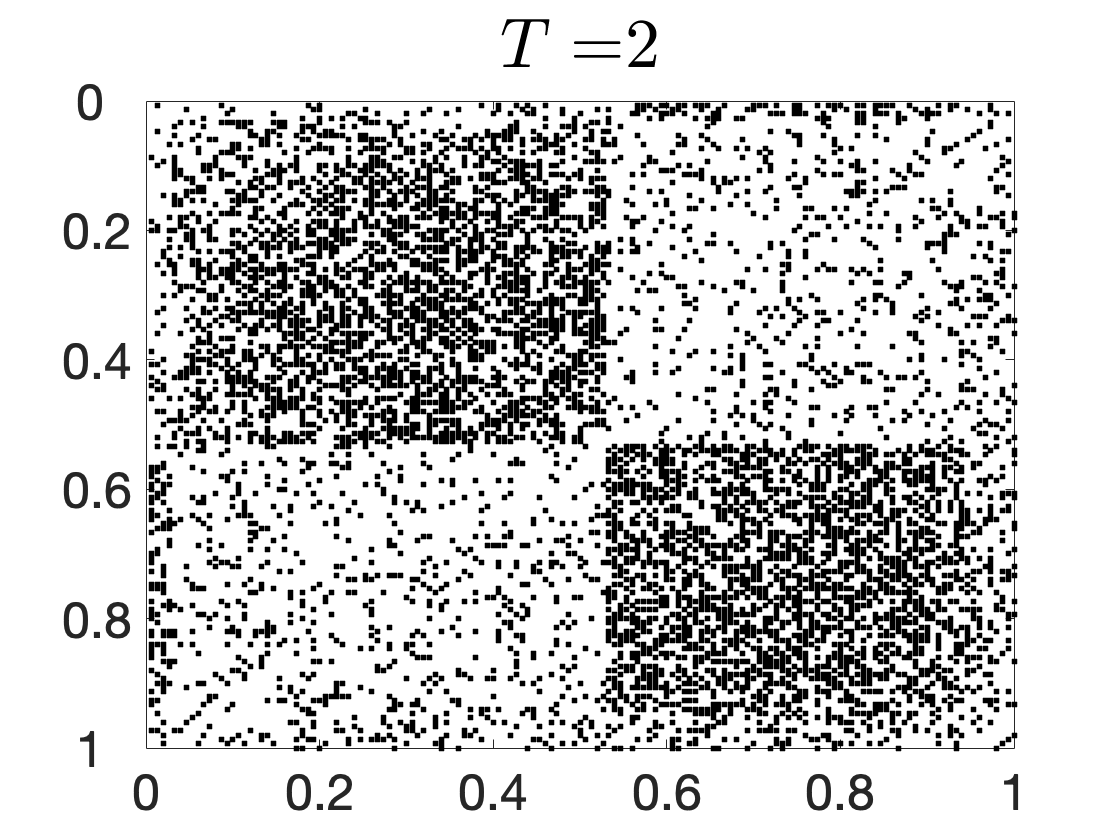}
	\includegraphics[width=5cm]{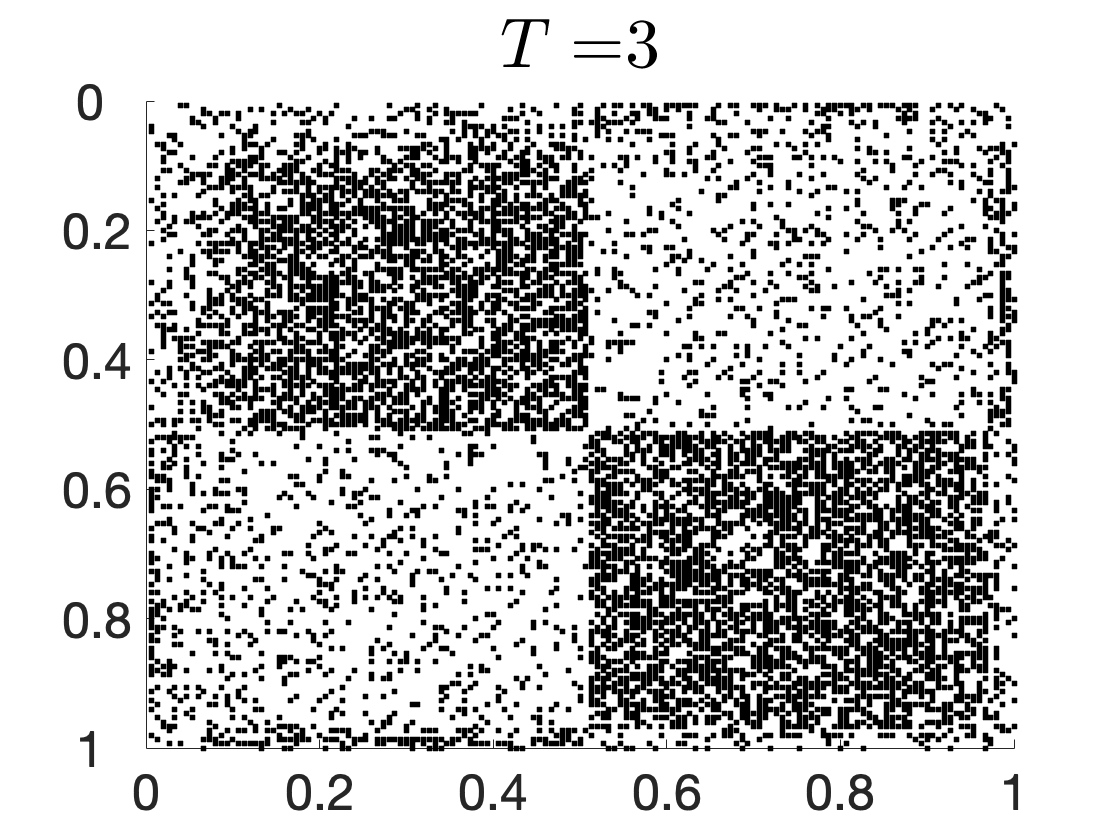}
	\includegraphics[width=5cm]{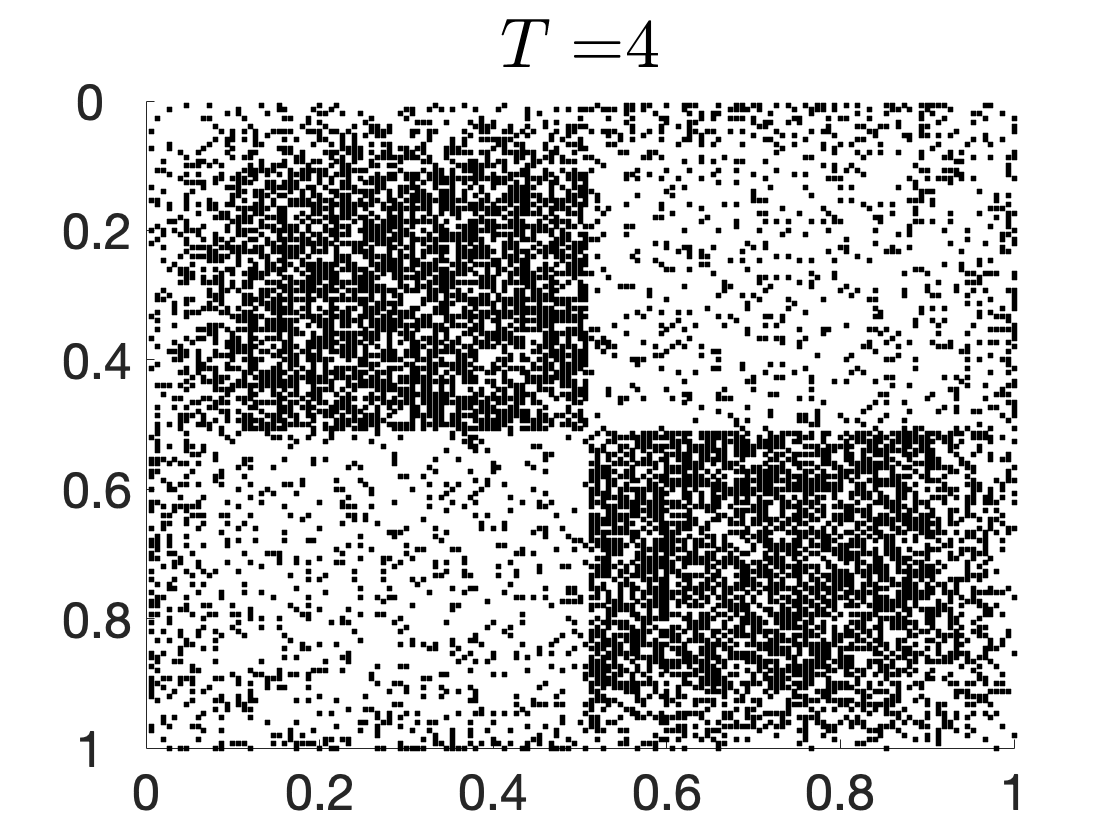}
	
	\includegraphics[width=5cm]{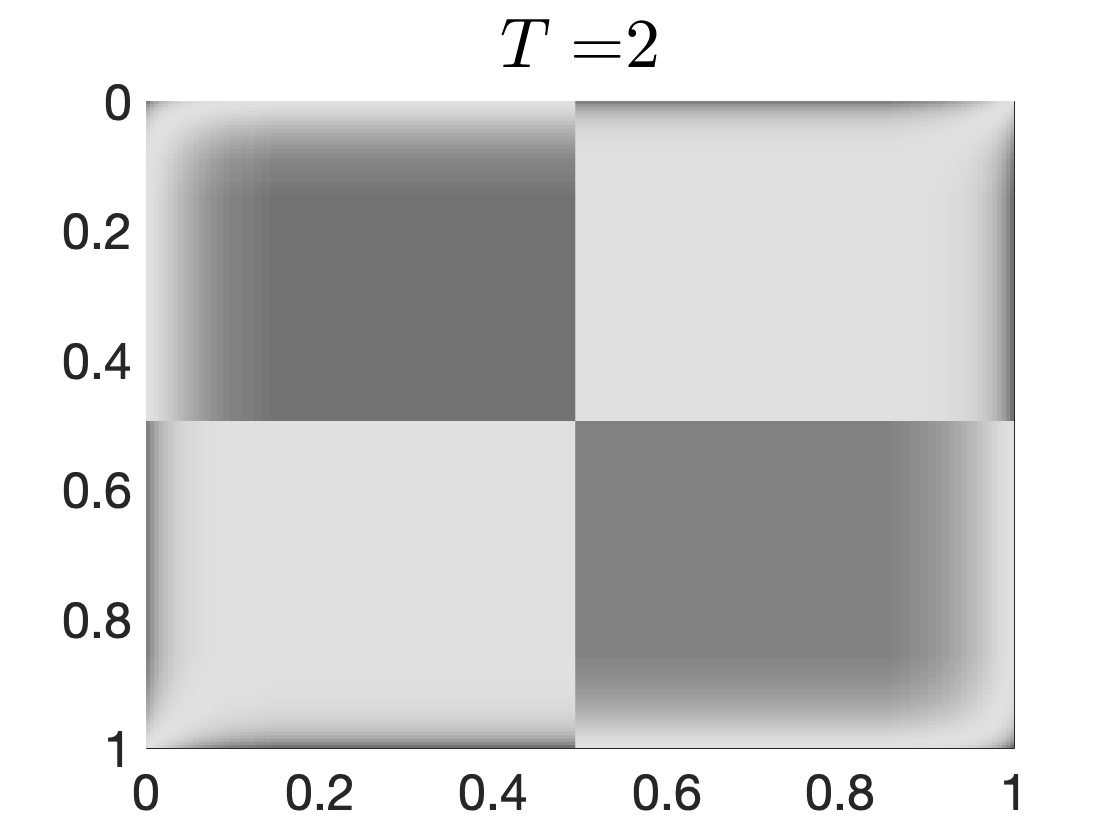}
	\includegraphics[width=5cm]{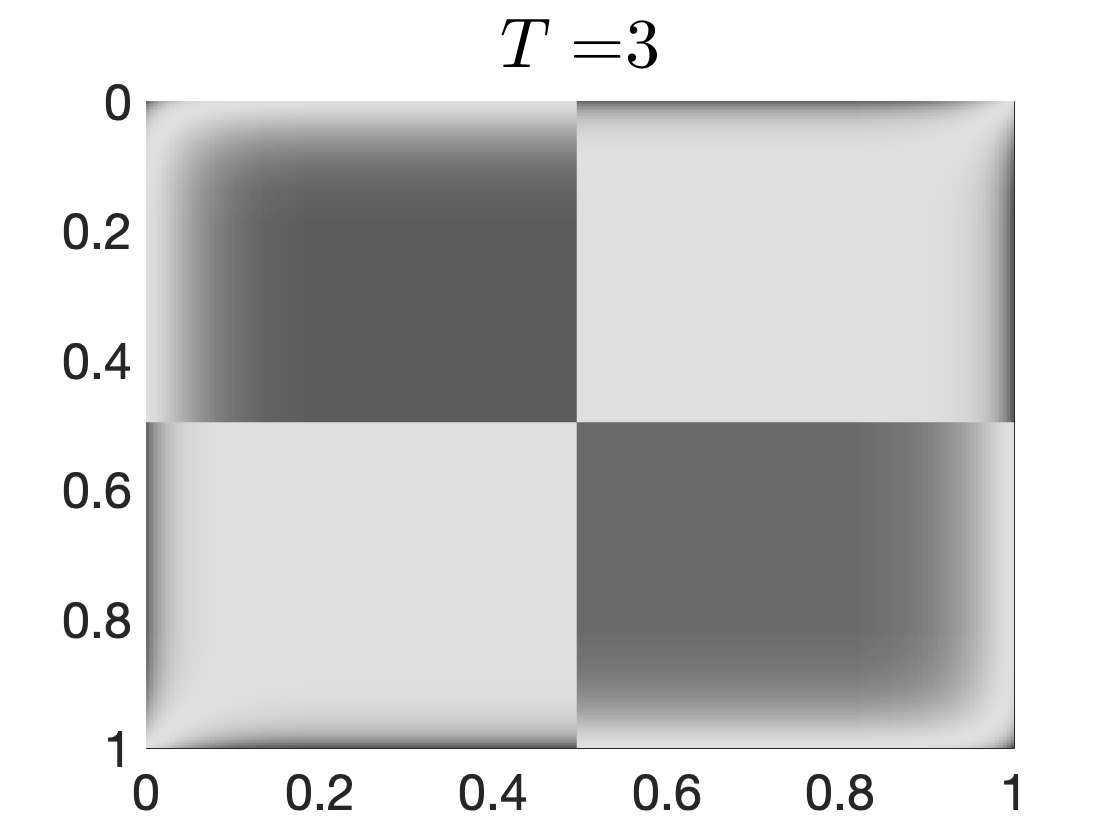}
	\includegraphics[width=5cm]{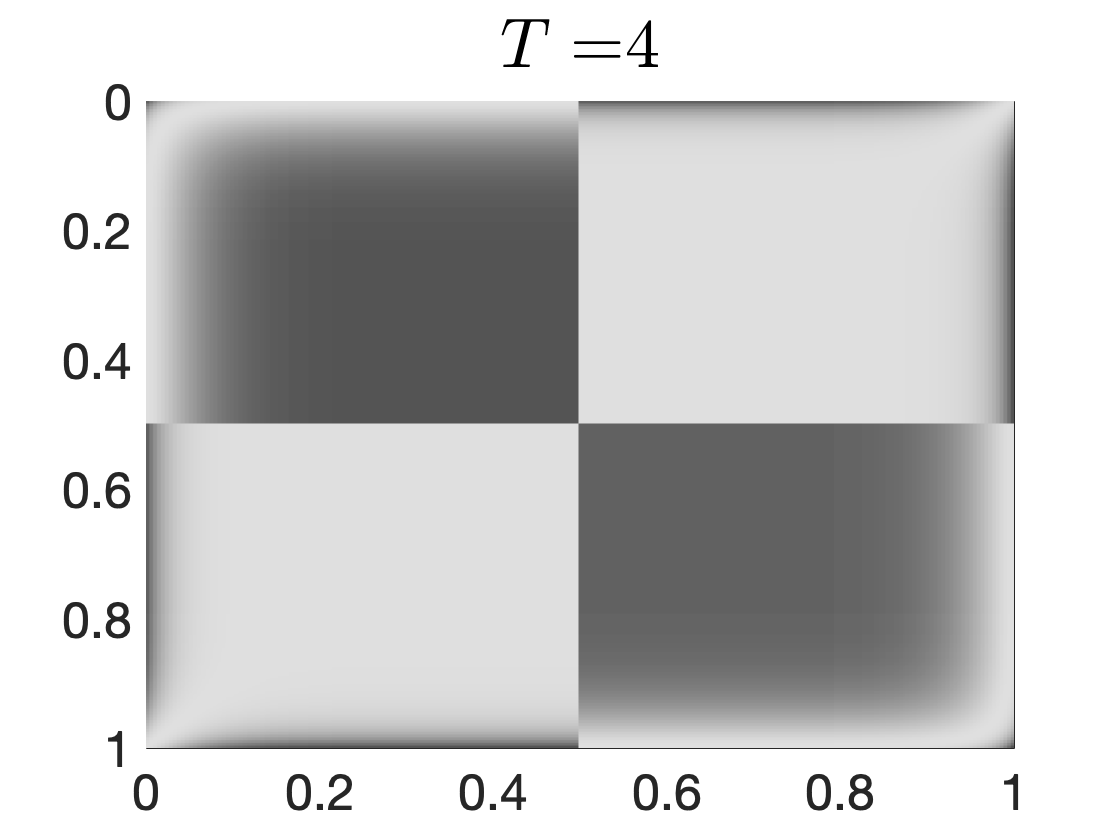}
	\caption{\small The top row displays the empirical graphon when $n=150$, $q=2$ and $T=2,3,4$, the bottom row displays the corresponding functional law of large numbers. Simulations are based on a single run. A dot represents an edge. The labels of the vertices are updated dynamically so that they are ordered lexicographically, i.e., the vertices with opinion $+$ have lower labels than the vertices with opinion $-$, and then by increasing type.}
	\label{fig2-mod3}
\end{figure}
%%%%%%%%%%%%%%%%%%%%%%%%%%%%%%%%%%%%%%%%%%%%%%%%%%%%%%%%%%%

%%%%%%%%%%%%%%%%%%%%%%%%%%%%%%%%%%%%%%%%%%%%%%%%%%%%%%%%%%
\begin{figure}[htbp]
	\includegraphics[width=5cm]{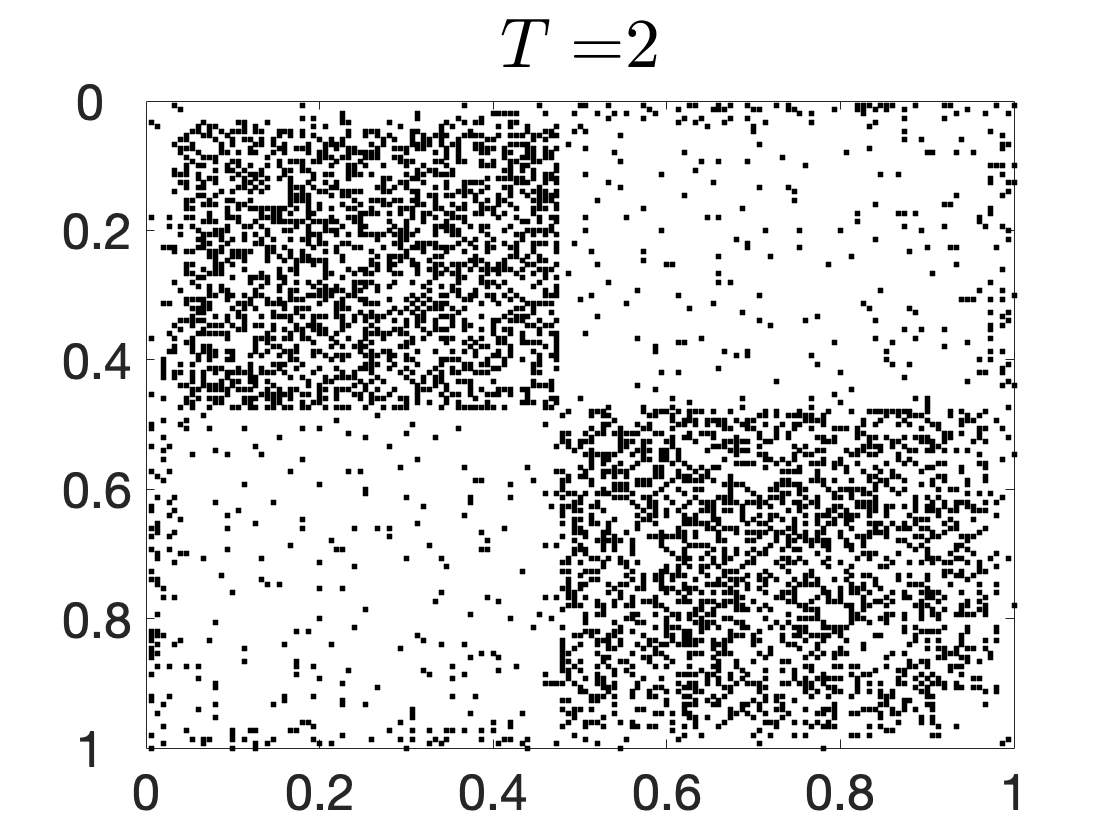}
	\includegraphics[width=5cm]{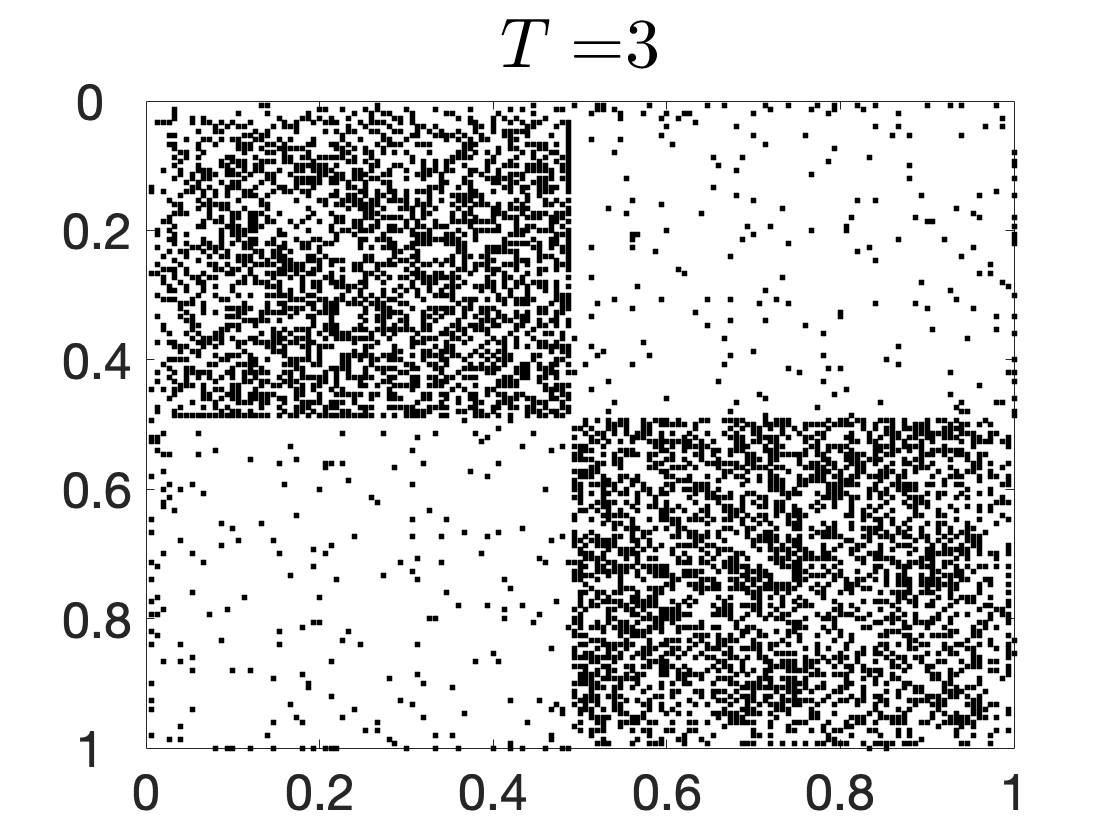}
    \includegraphics[width=5cm]{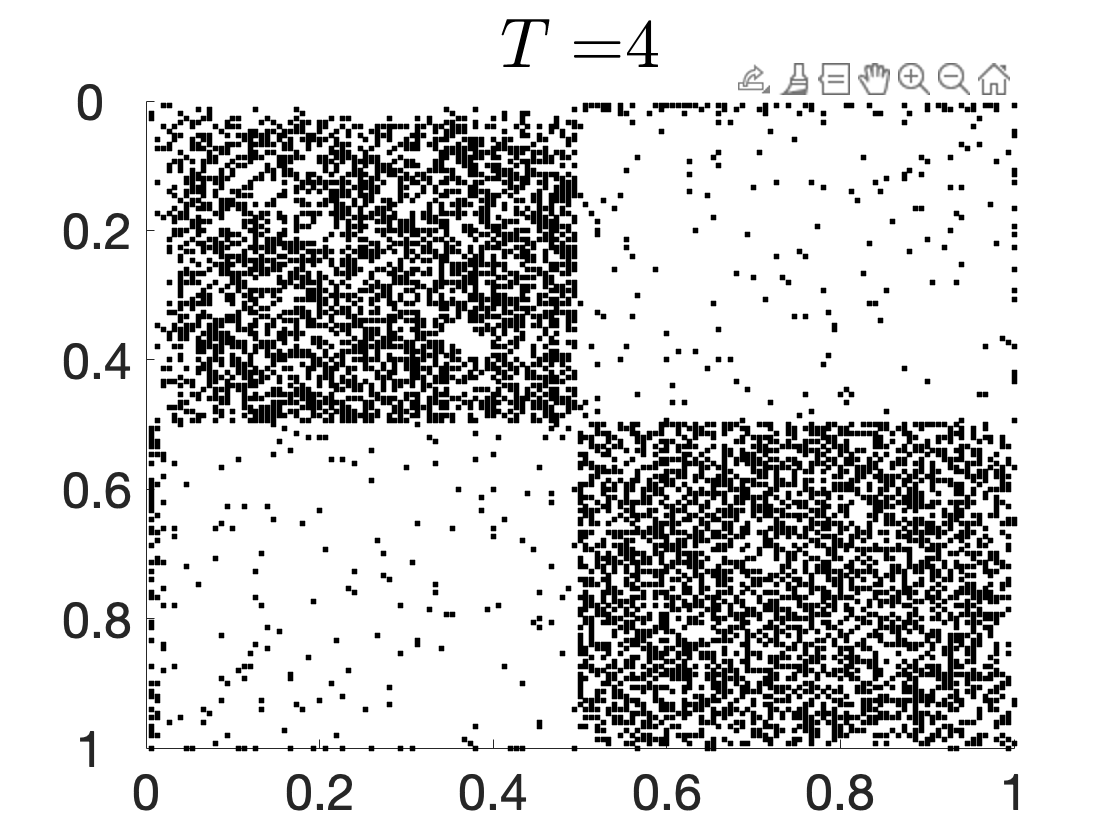}
	
	\includegraphics[width=5cm]{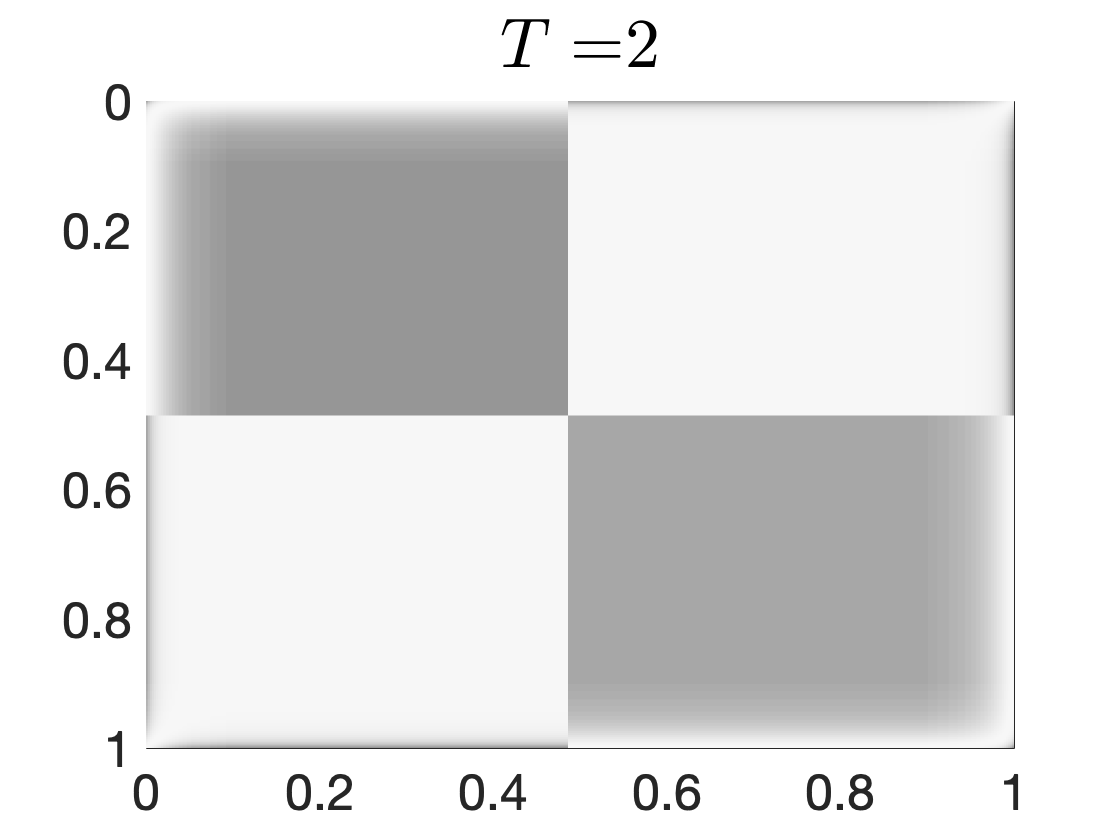}
	\includegraphics[width=5cm]{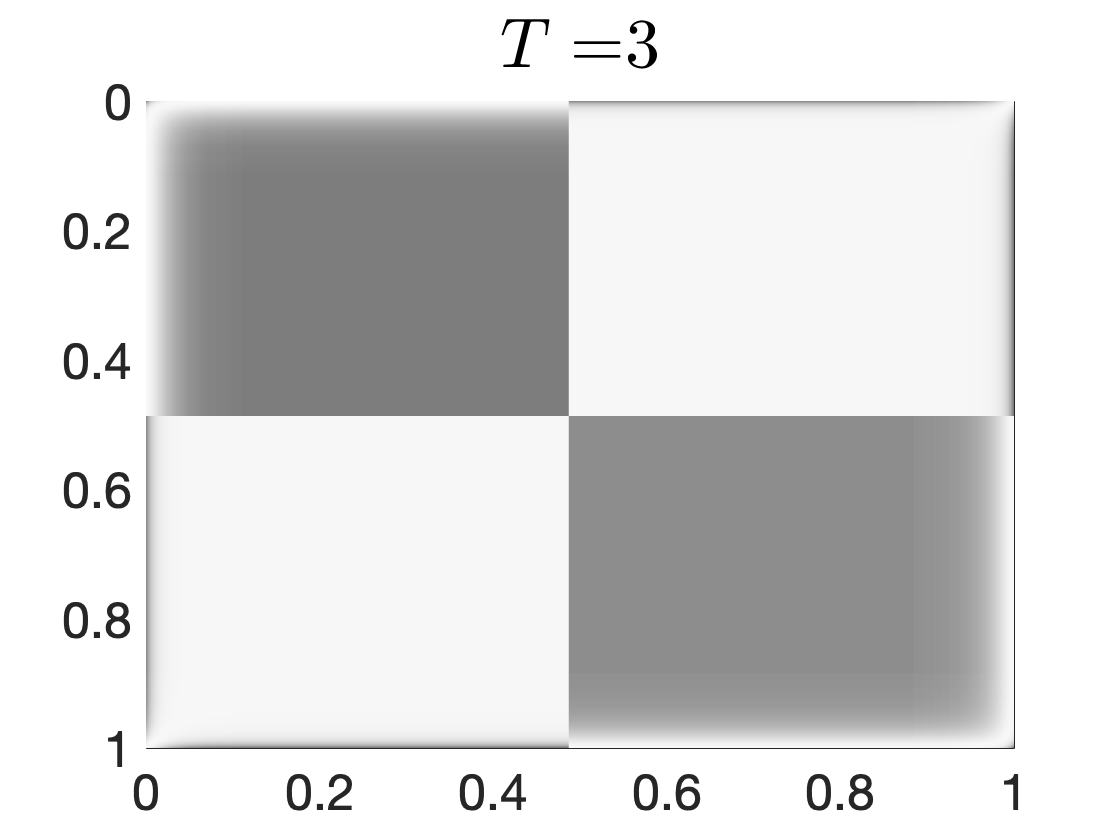}
    \includegraphics[width=5cm]{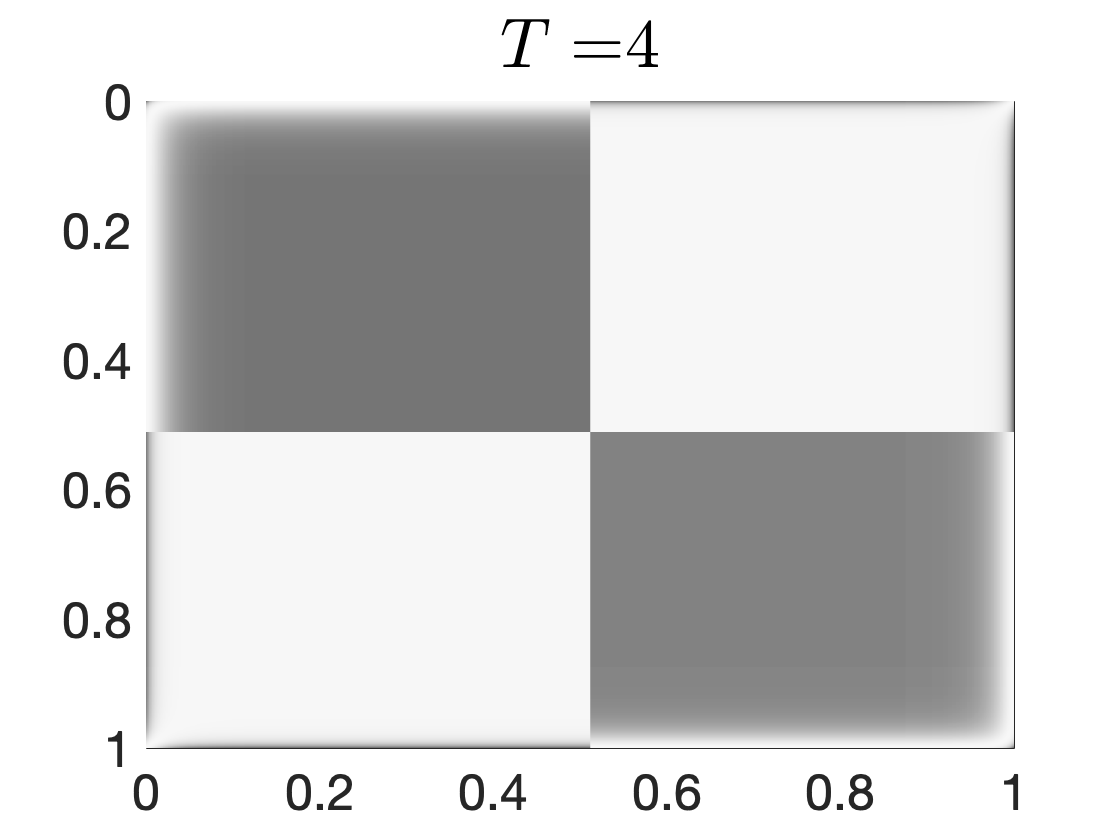}
	\caption{\small The top row displays the empirical graphon when $n=150$, $q=3$ and $T=2,3,4$, the bottom row displays the corresponding functional law of large numbers. Simulations are based on a single run. A dot represents an edge. The labels of the vertices are updated dynamically so that they are ordered lexicographically, i.e., the vertices with opinion $+$ have lower labels than the vertices with opinion $-$, and then by increasing type.}
	\label{fig3-mod3}
\end{figure}
%%%%%%%%%%%%%%%%%%%%%%%%%%%%%%%%%%%%%%%%%%%%%%%%%%%%%%%%%%

%%%%%%%%%%%%%%%%%%%%%%%%%%%%%%%%%%%%%%%%%%%%%%%%%
\begin{figure}[h!]
	\includegraphics[width=5.1cm]{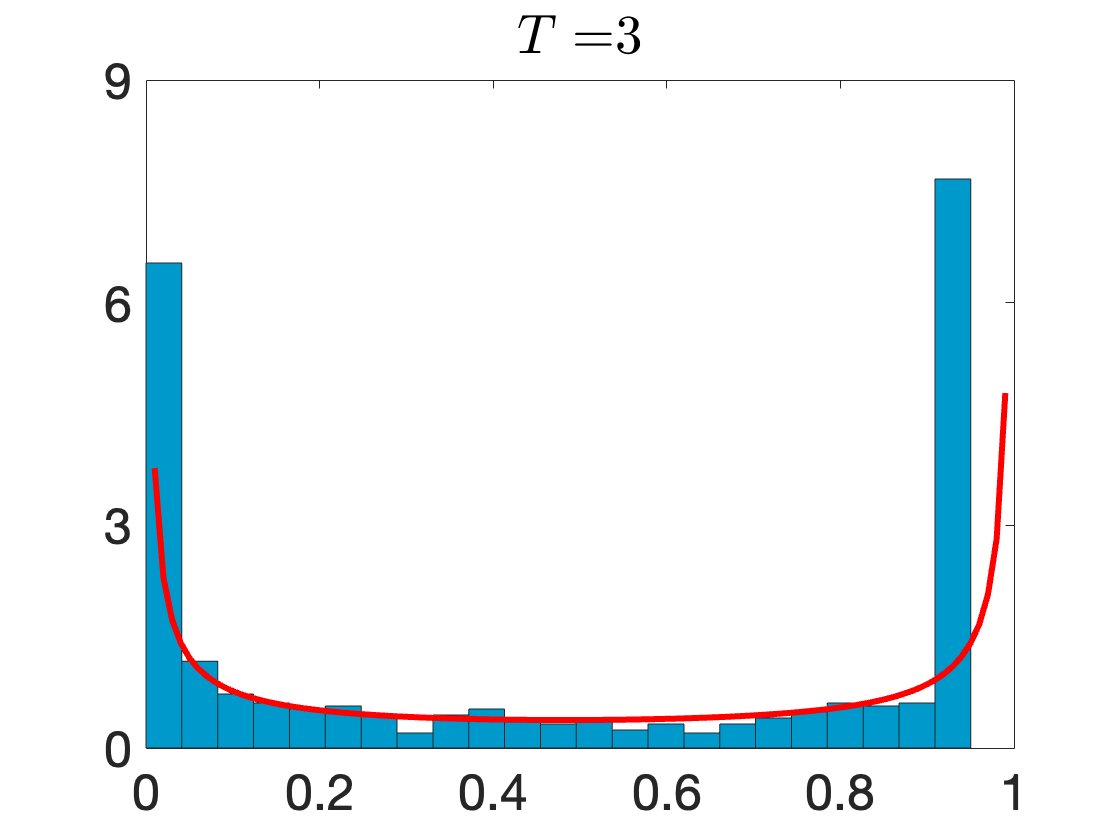}
	\includegraphics[width=5.1cm]{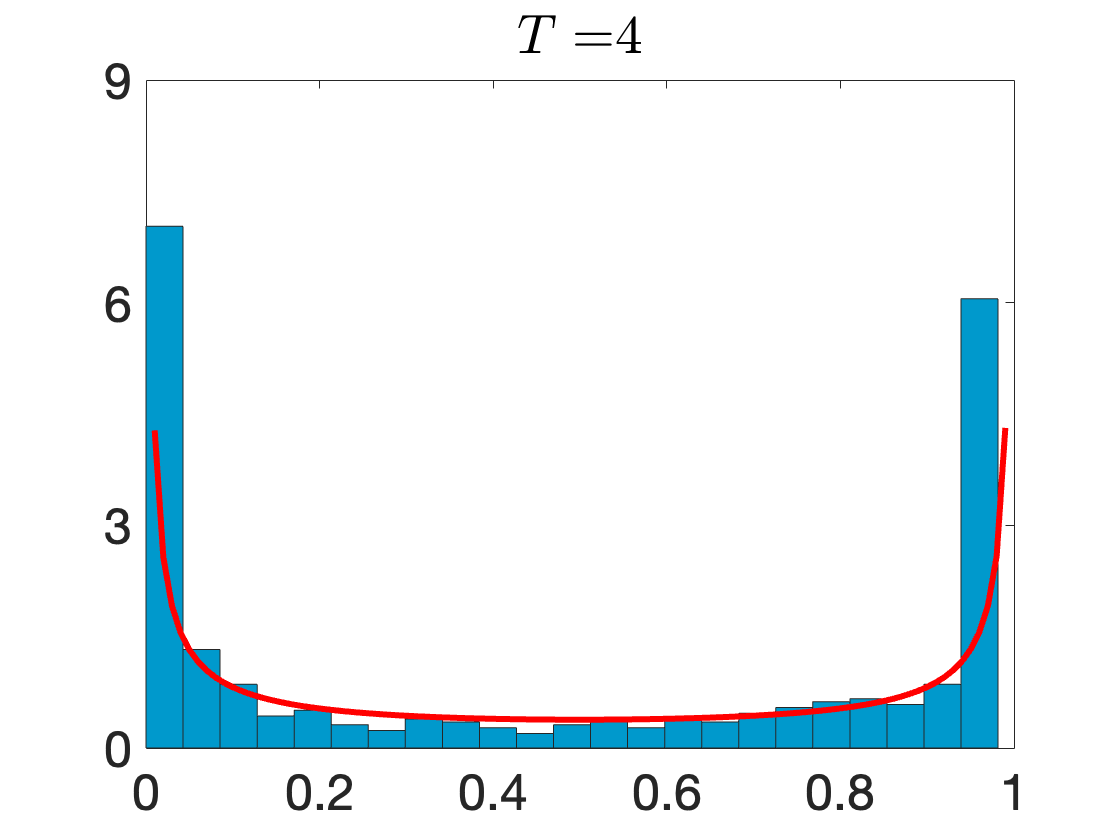}
	\includegraphics[width=5.1cm]{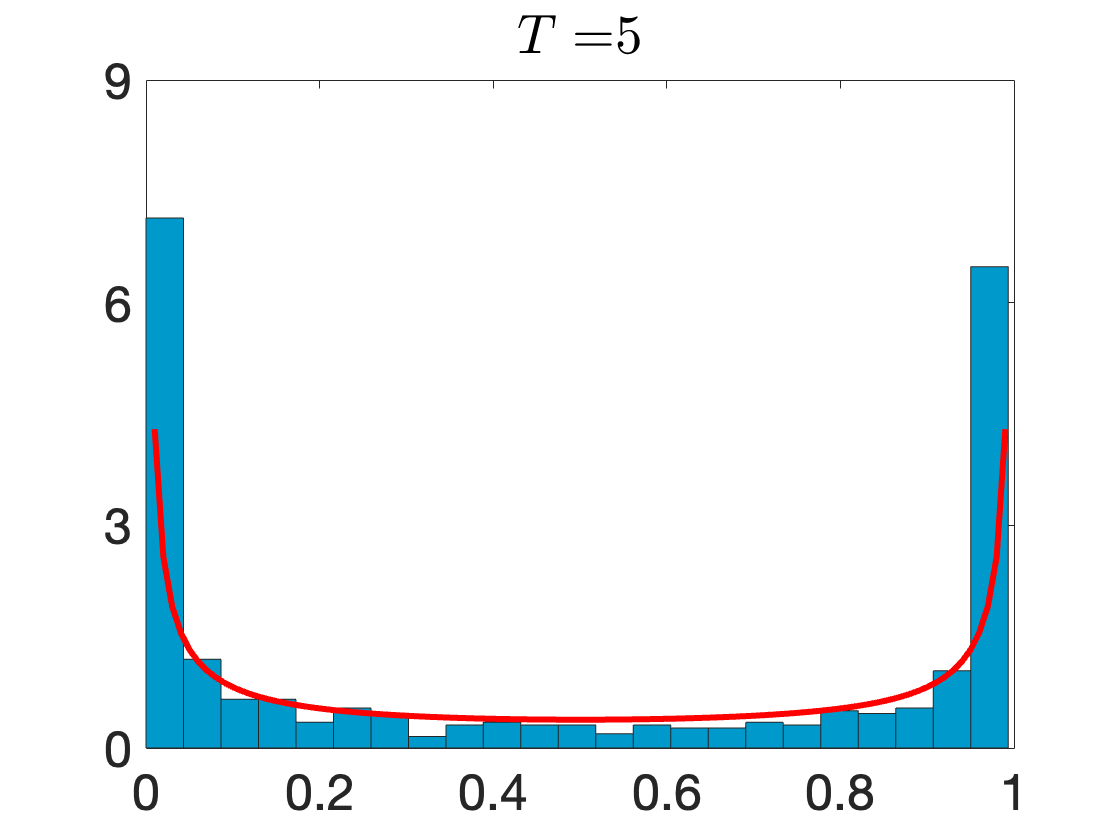}
	\caption{\small Empirical distribution of $(y_i(T))_{i\in [n]}$ when $n=600$ and $T=3,4,5$, for $q=1$, $\beta=0.5$, $\pi^g_+=\pi^g_-=0.7$ and $\pi^r_+=\pi^r_-=0.3$. For any $i\in[n]$, $y_i(0)$ is chosen uniformly at random in $[0,1]$, independently of the other vertices. The red line indicates the corresponding $\hbox{Beta}(\beta p_+,\beta(1-p_+))$ density function, where $p_+$ denotes the proportion of vertices holding opinion $+$. Simulations are based on a single run.}
	\label{fig:histmod3}
\end{figure}
%%%%%%%%%%%%%%%%%%%%%%%%%%%%%%%%%%%%%%%%%%%%%%%%%%

%%%

\subsection{Numerical examples}
\label{mod3:num}

Suppose that $p_0=0.05$, all vertices are initially assigned opinion $+$ or $-$ with equal probability, and $y_i(0)$ is uniformly distributed in $[0,1]$ for any $i\in[n]$. We take $\beta=0.5$, $\pi_-^r=\pi_+^b=0.9$, and $\pi_+^r=\pi_-^b=0.1$. In Figs.\ \ref{fig1-mod3},\ref{fig2-mod3},\ref{fig3-mod3} we let $q=1,2,3$ respectively, and plot the empirical graphon when $n=150$ for $T=2,3,4$ (top row), and the corresponding functional law of large numbers implied by Theorem \ref{thm:graphonconvthird} (bottom row). By comparing these figures we observe that as $q$ increases the model becomes increasingly polarised, as intended. In Fig.~\ref{fig:histmod3} we let $q=1$ and compare the empirical distribution $(y_i(T))_{i\in[n]}$ when $n=600$ and $T=3,4,5$ to the limiting Beta distribution implied by Theorem \ref{thm:systempdeasymp3}. In this figure we observe that the empirical type distribution is well approximated by the limiting Beta distribution even for relatively small values of $T$.

%%%

\subsection{Proof of the main theorems}
\label{mod3:pr}

%%%

\subsubsection{Proof of Theorem \ref{thm:graphonconvthird}}
\label{sec:mim2}

To prove convergence of the process in the space of graphons, we follow the line of argument that we relied on in Section \ref{sec:coupling}, namely, we construct a mimicking process that satisfies Assumptions 1-3 in Section \ref{sec:genstrategy}, and that is sufficiently close to the original process in $L^1$. Since similar arguments apply, we only report the definition of the mimicking process and the description of the coupling.

\medskip\noindent
{\it Mimicking process.} Suppose that the process $(G_n^*(t))_{t\in[0,T]}$ is characterised by the following dynamics:
\begin{itemize}
	\item $G_n^*(0)$ is an ERRG with connection probability $p_0$.
	\item Each vertex $i$ holds opinion $+$ or $-$, and is assigned a rate-$\beta$ Poisson clock. Each time this clock rings, vertex $i$ updates its opinion as follows: with probability $\widetilde\alpha(t;y^*_i(t),\vec{v})$ it takes opinion $+$, where $y^*_i(t)$ denotes the type of vertex $i$ at time $t$ in $G_n^*(t)$. Otherwise, it takes opinion $-$.
	\item Each edge is assigned a rate-1 Poisson clock. Each time this clock rings, the edge $ij$ updates its state (either active or inactive): the edge is active with a probability that depends on the opinion of the selected vertices, namely, $2\left[\tfrac14(p(x^*_i(t))+p(x^*_j(t)))(2-p(x^*_i(t))-p(x^*_j(t)))\right]^q$, where $x^*_i(t)$ denotes the opinion of vertex $i$ at time $t$ in $G_n^*(t)$. Otherwise, edge $ij$ is inactive.
\end{itemize}

\medskip\noindent
{\it Description of the coupling.} Suppose that the outcomes of $(G_n(t))_{t\geq0}$ and $(G^*_n(t))_{t\geq0}$ are generated in the following manner.
\begin{itemize}
	\item For each $i\in[n]$, vertex $i$ is assigned the same (coupled) rate-$\beta$ Poisson clock in both processes. When the clock associated to vertex $i$ rings, generate an outcome $u$ of a $\hbox{Unif}(0,1)$ distribution.
	\begin{itemize}
		\item If $u\leq N_i^+(t)/N_i(t)$, then vertex $i$ takes opinion $+$, otherwise it takes opinion $-$, in $(G_n(t))_{t\geq0}$, where $N_i(t)$ and $N_i^+(t)$ denote the total number of neighbours of vertex $i$ and those having opinion $+$, respectively, in $(G_n(t))_{t\geq0}$.
		\item If $u\leq\widetilde\alpha(t;y_i^*(t),\vec{v})$, then vertex $i$ takes opinion $+$, otherwise it takes opinion $-$, in $(G^*_n(t))_{t\geq0}$.
	\end{itemize}
	\item Assign each edge the same (coupled) rate-$1$ Poisson clock in both processes. When the Poisson clock associated with edge $ij$ rings, generate an outcome $u$ of a $\hbox{Unif}(0,1)$ distribution. 
	\begin{itemize}
		\item If $u\leq 2\left[\tfrac14(p(x_i(t))+p(x_j(t)))(2-p(x_i(t))-p(x_j(t)))\right]^q$, then the edge $ij$ is active, otherwise it is inactive, in $(G_n(t))_{t\geq0}$.
		\item If $u\leq 2\left[\tfrac14(p(x^*_i(t))+p(x^*_j(t)))(2-p(x^*_i(t))-p(x^*_j(t)))\right]^q$, then the edge $ij$ is active, otherwise it is inactive, in $(G^*_n(t))_{t\geq0}$.\end{itemize} 
\end{itemize}

%%%

\subsubsection{Proof of Theorem~\ref{thm:systempdethird}}

In view of the discussion in Section \ref{sec:prep3}, the claim follows with similar arguments as those developed in Section \ref{sec:pdemodel2}.

%%%

\subsubsection{Proof of Theorem \ref{thm:systempdeasymp3}}

We start with case (i). The claimed Beta limit in \eqref{eq:limdensities3bis} follows by checking that
$\widetilde\alpha(\infty;u,\vec{v}) = p_+$,
where
\[
\widetilde\alpha(\infty;u,\vec{v})= \lim_{t\to\infty} \widetilde\alpha(t;u,\vec{v})
= \dfrac{\int_0^1 {\rm d}y\,f_+(\infty,y) \displaystyle \lim_{t\to\infty} H_R(t;y,u)}{\int_0^1 {\rm d}y\, [f_+(\infty,y) + f_-(\infty,y)]\, \displaystyle\lim_{t\to\infty}H_R(t;y,u)}.
\]
This turns out to be true when $\pi^g_+=\pi^g_-$ and $\pi^r_+=\pi^r_-$. To show that there are no other admissible analytic densities occurring in the limit, we argue as in the proof of Theorem \ref{thm:systempdeasymp2}\emph{(i)}. 

Finally, the proof of (ii) follows by arguing as in the proof of Theorem \ref{thm:systempdeasymp2}\emph{(ii)}. 

%%%

\section{Conclusions}
\label{sec:conclusions}

In this paper we proved functional laws of large numbers for three examples of the voter model on a dynamic random graph, both with one-way and two-way feedback. These functional laws of large numbers were used to visualise the evolution of the process, determine when both opinions are present for an extended period time (co-existence) or one opinion quickly becomes dominant (consensus), and to characterise the limiting densities of the vertex types in terms of Beta-distributions. To the best of our knowledge, this paper is the first to investigate co-evolutionary networks with the help of graphons. As such, several questions remain open:
\begin{itemize}
    \item As explained in Section \ref{sec:genstrategy}, the approach we develop seems robust and can possibly be applied to modifications of the voter model such as the contact process. Indeed, we believe that reducing a two-way feedback process to a one-way feedback mimicking process rests on a broader principle, which merits future research. 
    \item In Remark \ref{rem:diff}, we conjectured a \emph{diffusion limit} for the co-evolutionary network in Section \ref{sec:twoway1}. It remains to be determined whether an approach similar to Section \ref{sec:genstrategy} can be used to establish a diffusion limit for the model in Section \ref{sec:twoway1}, and more broadly for co-evolutionary models in general. We believe that the fact that diffusion limits require a different time scale (see Remark \ref{rem:diff}) will lead to difficulties in the equivalent of Lemma \ref{lem:coupling} and a different approach may be required.
    \item In Section \ref{sec:nonlin}, we described a co-evolutionary network where it was unclear how to write down a functional law of large numbers. It remains to be determined whether functional laws of large numbers can be established in such cases, and if so, what mathematical framework is required.
    \item It is natural to ask what happens for the {\it Potts} version of the  voter model, i.e., each individual can have an opinion in the set $\{1,...,m\}$ with $m \geq 3$. We believe that functional laws of large numbers for the densities of the $m$ opinions still hold and that the proof should be a slight generalisation of that presented in the present paper for $m=2$. Possibly there is a {\it phase transition} as a function of $m$. In particular, it is worth understanding under which assumptions there is coexistence of $m$ (or less) opinions, or consensus is attained, or polarisation occurs.
\end{itemize}

%%%%%%%%%%%%%%%%%%%%%%%%%%%%%%%%%%%%%%%%%%%%%%%%%%

\appendix

%%%%%%%%%%%%%%%% APPENDIX A %%%%%%%%%%%%%%%%%%%%%%%%%%%%%%

\section{Preliminaries: graphons and Hoeffding's inequality}
\label{appA}

\par{\bf Graphons.}
Let $\cW$ be the space of functions $h\colon\,[0,1]^2 \to [0,1]$ such that $h(x,y) = h(y,x)$ for all $(x,y) \in [0,1]^2$, endowed with the {\it cut distance}
\begin{equation}
\label{eq:cutdist}
d_{\square}(h_1,h_2) = \sup_{S,T\subseteq[0,1]}\left|\int_{S \times T} \dx \dy\, [h_1(x,y)-h_2(x,y)]\right|, \quad h_1,h_2 \in \cW.
\end{equation}
On $\cW$, called the space of graphons, there is a natural equivalence relation $\sim$. Let $\Sigma$ be the space of measure-preserving bijections $\sigma\colon\, [0,1] \to [0,1]$. Then $h_1(x,y) \sim h_2(x,y)$ if $\delta_{\square}(h_1,h_2)=0$, where $\delta_{\square}$ is the \emph{cut metric} defined by 
\begin{equation}
\label{deltam}
\delta_{\square}(\tilde{h}_1,\tilde{h}_2) 
= \inf _{\sigma_1,\sigma_2 \in \Sigma} d_{\square}(h_1^{\sigma_1}, h_2^{\sigma_2}),
\qquad \tilde{h}_1,\tilde{h}_2 \in \widetilde\cW,
\end{equation}
with $h^\sigma(x,y)=h(\sigma x,\sigma y)$. This equivalence relation yields the quotient space $(\widetilde\cW,\delta_{\square})$, which is compact.

A finite simple undirected graph $G$ on $n$ vertices can be represented as a graphon $h^G\in\cW$ by setting
\begin{equation}
\label{eq:graphon}
h^G(x,y) := \left\{
\begin{array}{ll}
1 &\hbox{if there is an edge between vertex } \lceil nx\rceil \hbox{ and vertex } \lceil ny \rceil, \\
0 &\hbox{otherwise},
\end{array}
\right.
\end{equation}
which referred to as the {\it empirical graphon} associated with $G$, and has a block structure.

\medskip\noindent
{\bf Hoeffding's inequality.} If $X \sim{\rm Bin}(n,p)$, then, for any $a >0$,
\begin{equation}
	\label{eq:Hoeffgen}
\mathbb{P}(X \geq n(p+a)) \leq \exp{(-2na^2)}.
\end{equation}

%%%%%%%%%%%%%% APPENDIX B %%%%%%%%%%%%%%%%%%%%%%%%%%

\section{Proof of some lemmas of Section \ref{sec:oneway}}
\label{appB}

\begin{proof}[Proof of Lemma \ref{lmm:PDE1}]
The statement follows after applying the general strategy explained in Section \ref{sec:genstrategy} to derive the Kolmogorov forward equations starting from the generator \eqref{eq:gen1}. We need only to explicitly characterised the drift term. Suppose that a vertex has opinion $+$, then the corresponding drift term is 
\begin{equation}\label{eq:b+}
b(+, y) = 1 - y.
\end{equation}
Indeed, if vertex $i$ has opinion $+$ during the whole time interval $(t,t+\dd t)$, then 
\[
\begin{array}{ll}
	y_i(t+\dd t) &= \eee^{-(t+\dd t)}y_i(0) + \displaystyle\int_0^{t+\dd t} \dd s \, \eee^{-s} \mathbf{1}\{x_i(t+\dd t-s)=+\} \\
	&= \eee^{-(t+\dd t)}y_i(0) + \eee^{-(t+\dd t)} \displaystyle\int_0^{t+\dd t} \dd s \, \eee^s \mathbf{1}\{x_i(s)=+\} \approx y_i(t) - y_i(t)\dd t + \dd t,
\end{array}
\]
where $\approx$ means that the left and right hand sides are the same in the limit $\dd t \to0$. Thus, we conclude that $y_i'(t)=1-y_i(t)$. Similarly, we deduce that 
\begin{equation}\label{eq:b-}
b(-, y) = -y.
\end{equation}
As a consequence,
$$
\frac{\partial}{\partial t} f_+(t,u) - (1-u) \frac{\partial}{\partial u} f_+(t,u) = -\gamma_{+-} f_+(t,u) + \gamma_{-+} f_-(t,u),
$$
and 
$$
\frac{\partial}{\partial t} f_-(t,u) +u \frac{\partial}{\partial u} f_-(t,u) = -\gamma_{-+} f_-(t,u) +\gamma_{+-} f_+(t,u).
$$
which establish the claim. 
\end{proof}

\begin{proof}[Proof of Lemma \ref{Npower}]
The proof is by induction on $k$. The case $k=1$ is trivial. Suppose that the claim holds for $k$. For $k+1$, we find, using the induction hypothesis,
$$
\begin{array}{ll}
{\bar N}^{k+1}
&= (\gamma_{+-} + \gamma_{-+})^{k-1}(\gamma_{+-} A + \gamma_{-+} B)(\gamma_{+-} A^k + \gamma_{-+} B^k) \\[0.2cm]
&= (\gamma_{+-} + \gamma_{-+})^{k-1}(\gamma_{+-}^2 A^{k+1} + \gamma_{+-}\gamma_{-+} ABB^{k-1}
+ \gamma_{+-}\gamma_{-+} BAA^{k-1} + \gamma_{-+}^2 B^{k+1}) \\[0.2cm]
&= (\gamma_{+-} + \gamma_{-+})^{k-1}(\gamma_{+-}^2 A^{k+1} + \gamma_{+-}\gamma_{-+} B^{k+1}
+ \gamma_{+-}\gamma_{-+} A^{k+1} + \gamma_{-+}^2 B^{k+1}) \\[0.2cm]
&= (\gamma_{+-} + \gamma_{-+})^{k}(\gamma_{+-} A^{k+1} + \gamma_{-+} B^{k+1}),
\end{array}
$$
where the third equality follows from $AB^2=B$, $BA^2=A$ and \eqref{Apower}--\eqref{Bpower}.
\end{proof}

\begin{proof}[Proof of Lemma \ref{Nexp}]
By Lemma \ref{Npower}, we can write
$$
\begin{aligned}
{\rm e}^{\bar{N}t}
&= \mathbb{I}_2+\sum_{k\in\mathbb{N}}
\dfrac{(\gamma_{+-} + \gamma_{-+})^{k-1}t^k}{k!}(\gamma_{+-} A^k + \gamma_{-+} B^k) \\
&= \mathbb{I}_2 + \dfrac{\gamma_{+-}}{\gamma_{+-} + \gamma_{-+}} \sum_{k\in\mathbb{N}}
\dfrac{(\gamma_{+-} + \gamma_{-+})^{k}t^k}{k!} A^k
+ \dfrac{\gamma_{-+}}{\gamma_{+-} + \gamma_{-+}} \sum_{k\in\mathbb{N}}
\dfrac{(\gamma_{+-} + \gamma_{-+})^{k}t^k}{k!} B^k \\[0.4cm]
&= \mathbb{I}_2 + \dfrac{\gamma_{+-}}{\gamma_{+-} + \gamma_{-+}}({\rm e}^{t(\gamma_{+-} + \gamma_{-+}) A}-\mathbb{I}_2)
+ \dfrac{\gamma_{-+}}{\gamma_{+-}+\gamma_{-+}}({\rm e}^{t(\gamma_{+-} + \gamma_{-+}) B}-\mathbb{I}_2) \\[0.4cm]
&= \dfrac{1}{\gamma_{+-} + \gamma_{-+}}(\gamma_{+-}{\rm e}^{t(\gamma_{+-}+\gamma_{-+}) A}
+ \gamma_{-+}{\rm e}^{t(\gamma_{+-}+\gamma_{-+}) B}),
\end{aligned}
$$
which settles the claim. 
\end{proof}

\begin{proof}[Proof of Lemma \ref{expansion}]
Split the exponential series into even and odd terms, and use \eqref{Apower}-\eqref{Bpower}.
\end{proof}

%%%%%%%%%%%%%% APPENDIX C %%%%%%%%%%%%%%%%%%%%%%%%%%

\section{Additional lemmas for Section \ref{sec:twoway1}}
\label{appC}

\begin{lemma}
	\label{lem:setN}
	There exists $\ell>0$ such that $|{\cal N_{\ell}}|=0$ with asymptotically high probability, where the set ${\cal N}_{\ell}$ is defined in \eqref{eq:setN}.
\end{lemma}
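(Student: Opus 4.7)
The key observation is that in both the co-evolutionary process $(G_n(s))$ and the mimicking process $(G_n^*(s))$, each edge is present at any fixed time $s\in[0,T]$ with probability bounded uniformly away from $0$, so each vertex is incident to $\Theta(n)$ active edges. Hoeffding's inequality then implies exponential concentration of the degrees around their means, and a union bound handles all $i\in[n]$ and a dense mesh of times in $t_\Delta$.

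First, I would establish the uniform lower bound. Assuming $p_0,\pi_+,\pi_-\in(0,1)$ (the degenerate cases are trivial), set $p_{\min}:=\min\{p_0,\pi_+,\pi_-\}>0$. From the expression
\[
p_{ij}(s)=\eee^{-s}p_0+\int_0^s \dd u\,\eee^{-u}\,\tfrac12\bigl[\pi_{x_i(s-u)}+\pi_{x_j(s-u)}\bigr],
\]
valid in both models (and independent of the rest of the edge process, since edges are conditionally independent given the vertex opinion trajectories), one has $p_{ij}(s)\geq p_{\min}$ for every $s\in[0,T]$ and every pair $i,j$. Fix $\ell:=p_{\min}/4$.

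Next I would apply Hoeffding's inequality pointwise in $s$. Conditionally on the opinion trajectories, $N_i(s)=\sum_{j\neq i}E_{ij}^{(n)}(s)$ is a sum of $n-1$ independent Bernoulli variables with mean at least $p_{\min}(n-1)$, so by \eqref{eq:Hoeffgen},
\[
\mathbb{P}\bigl(N_i(s)\leq 2\ell n\bigr)\leq \exp\!\bigl(-c_1 n\bigr)
\]
for some constant $c_1>0$ not depending on $s$ or on the conditioning, and therefore the bound holds unconditionally. Exactly the same estimate applies to $N_i^*(s)$ in the mimicking process.

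The remaining step is to upgrade the pointwise estimate to the uniform-in-$s$ bound required by the definition of $\mathcal{N}_\ell$. The process $s\mapsto N_i(s)$ is piecewise constant, changing by $\pm 1$ only when the rate-$1$ Poisson clock of an edge incident to $i$ rings. Let $R_i$ denote the number of such rings during $t_\Delta=[t,t+\Delta)$; then $R_i\stackrel{\rm st}{\leq}{\rm Pois}((n-1)\Delta)$. Partition $t_\Delta$ into $K:=\lceil n\rceil$ subintervals of equal length $\Delta/K$, and let $s_0<s_1<\cdots<s_K$ be the endpoints. By a standard Chernoff bound for Poisson, the number of rings in each subinterval is at most $\ell n/2$ with probability $1-\exp(-c_2 n)$ for a suitable $c_2>0$, uniformly over subintervals and over $i$. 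Combining this with the pointwise Hoeffding bound at the grid points $s_0,\dots,s_K$, I would apply the union bound over $i\in[n]$ and over $k\in\{0,\dots,K\}$:
\[
\mathbb{P}\bigl(|\mathcal{N}_\ell|\geq 1\bigr)\leq 2n(K+1)\exp(-c_1 n)+2n(K+1)\exp(-c_2 n)\longrightarrow 0\quad(n\to\infty),
\]
because on the intersection of the good events $\{N_i(s_k)>2\ell n\}$ and $\{R_i\text{ per subinterval}\leq \ell n/2\}$ we have $N_i(s)>\ell n$ for every $s\in t_\Delta$. The identical argument for $(G_n^*(s))$ completes the proof.

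The only potentially delicate point is to make sure that the conditional independence used in the Hoeffding step is legitimate in the co-evolutionary model; this is fine because, at any fixed $s$, edges are conditionally independent given the past opinion trajectories, which is what $p_{ij}(s)$ depends on. Everything else is a routine concentration-and-union-bound argument.
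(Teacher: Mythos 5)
Your overall strategy --- a uniform lower bound $p_{\min}=\min\{p_0,\pi_+,\pi_-\}$ on the edge probabilities, pointwise Hoeffding concentration of the degrees, a time grid inside $t_\Delta$ together with a bound on the number of incident clock rings per grid cell, and a union bound over vertices and grid points --- is the same Hoeffding-plus-union-bound argument the paper gestures at (its proof of Lemma~\ref{lem:setN} is a one-line reference back to the estimates in the proof of Lemma~\ref{lem:coupling}), and you supply considerably more detail than the paper does. The discretisation step is sound: with $K\approx n$ cells, the number of rings of edges incident to a fixed vertex in one cell is stochastically dominated by a Poisson variable of mean about $\Delta$, so exceeding $\ell n/2$ has super-exponentially small probability, the degree moves by at most one per ring, and the union over $O(n^2)$ events is beaten by the $\mathrm{e}^{-cn}$ bounds. (Minor point: \eqref{eq:Hoeffgen} as stated is an upper-tail bound; you need its lower-tail counterpart, which is of course identical.)

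The one genuine weak spot is your justification of the Hoeffding step for the co-evolutionary process $G_n$. It is not true that, conditionally on the opinion trajectories, the edge indicators at time $s$ are independent: in the two-way-feedback model the opinion updates are made by sampling a uniform neighbour, so the realised opinion path of vertex $i$ carries information about which edges incident to $i$ were active at its update times, and conditioning on that path biases the joint law of the edges. (For the mimicking process $G_n^*$, and for model 1, your argument is fine, since there the opinions evolve autonomously.) The conclusion survives, but the clean route is a stochastic minorisation rather than conditioning: construct the edge process from its own Poisson clock and i.i.d.\ uniforms $U_{ij,k}$, declaring the edge active at the $k$-th ring iff $U_{ij,k}\le\tfrac12(\pi_{x_i}+\pi_{x_j})$, and set $\tilde E_{ij}(s)=\mathbf{1}\{U_{ij,K_{ij}(s)}\le p_{\min}\}$, where $K_{ij}(s)$ is the index of the last ring before $s$ (with the time-$0$ uniform and threshold $p_{\min}\le p_0$ if there is none). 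Then $\tilde E_{ij}(s)\le E_{ij}^{(n)}(s)$ pathwise, the variables $\tilde E_{ij}(s)$ are genuinely independent across pairs because they depend only on edge-$ij$ randomness, and each has success probability at least $p_{\min}$, so Hoeffding applies to $\sum_{j}\tilde E_{ij}(s)\le N_i(s)$. With that substitution your proof is complete.
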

\begin{proof}
	The proof follows after iteratively applying Hoeffding's inequality as done for $d_E(\cdot)$ and $d_V(\cdot)$ in the proof of Lemma \ref{lem:coupling} to get \eqref{eq:assumption}.
\end{proof}

\begin{lemma}\label{lmm:repalpha}
For any $s\in t_{\Delta}$,
	\[
	\alpha(s;y^*_i(s),\vec{v}) = \dfrac{{\int_0^{r^+(s)}} {\rm d}y \, g^{[F]}(s;y,F(s;y^*_i(s)))}{\int_0^1 {\rm d}y \, g^{[F]}(s;y,F(s; y^*_i(s)))}.
	\]
\end{lemma}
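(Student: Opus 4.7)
The plan is to establish the identity by a change of variables that recasts the integrals in the definition \eqref{eq:alpha} of $\alpha$ in terms of the reference graphon $g^{[F]}$. Fix $s\in t_\Delta$ and write $u=y_i^*(s)$. Throughout, $F(s;u)$ will denote the position in $[0,1]$ that corresponds, under Assumption~4, to the vertex of type $u$: concretely, $F(s;+,u)$ if vertex $i$ holds opinion $+$, and $r^+(s)+F(s;-,u)$ otherwise. As a preliminary observation, I would note that the initial densities are assumed analytic and the PDE system \eqref{eq:PDEsalt} propagates analyticity (by the Cauchy--Kovalevskaya theorem, as used in the proof of Theorem \ref{thm:systempdealt}), so that $F(s;+,\cdot)$ and $F(s;-,\cdot)$ are strictly increasing and $\bar F(s,\cdot)$ is a genuine two-sided inverse on each of the intervals $[0,r^+(s)]$ and $[r^+(s),1]$; in particular, $\bar F(s,F(s;u))=u$.

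Next I would handle the numerator. Substituting $y=\bar F(s,x)$ for $x\in[0,r^+(s)]$ gives $f_+(s,y)\,{\rm d}y={\rm d}x$, so
\[
\int_0^1 {\rm d}y\, f_+(s,y)\,H(s;y,u) = \int_0^{r^+(s)} {\rm d}x\,H(s;\bar F(s,x),\bar F(s,F(s;u))) = \int_0^{r^+(s)} {\rm d}x\,g^{[F]}(s;x,F(s;u)),
\]
where the last equality is the definition \eqref{eqn:empgra}. For the denominator I would split into $+$ and $-$ contributions: the $+$-part is treated as above, while for the $-$-part I would substitute $y=\bar F(s,x)$ with $x\in[r^+(s),1]$, for which $f_-(s,y)\,{\rm d}y={\rm d}x$. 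Adding the two pieces produces an integral over all of $[0,1]$,
\[
\int_0^1 {\rm d}y\, [f_+(s,y)+f_-(s,y)]\,H(s;y,u) = \int_0^1 {\rm d}x\, g^{[F]}(s;x,F(s;u)).
\]
Taking the ratio of the two displayed equations yields the stated identity.

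The only step that is not immediately formal is the validity of the inverse relation $\bar F(s,F(s;u))=u$, which fails in general for generalized inverses of non-strictly-increasing distribution functions; this is the main (mild) obstacle. It is resolved by the analyticity propagation discussed above, which guarantees strict monotonicity of $F(s;\pm,\cdot)$ on their supports, so the substitutions above are bona fide changes of variables. Everything else is a direct computation using the definitions \eqref{eq:alpha}, \eqref{eqn:invdef} and \eqref{eqn:empgra}.
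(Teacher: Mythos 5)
Your proof is correct and reaches the identity by the same overall strategy as the paper---treating the numerator and denominator of \eqref{eq:alpha} separately and converting each integral $\int_0^1 f_\pm(s,y)\,H(s;y,y_i^*(s))\,{\rm d}y$ into an integral of $g^{[F]}$ over the corresponding sub-interval of $[0,1]$---but the mechanism differs slightly. The paper exploits the fact that $H(s;\cdot,v)$ is affine, writing $H(s;y,u)=\tfrac{y}{2}(\pi_+-\pi_-)+\widetilde H(s;u)$, so that only the zeroth and first moments of $f_+(s,\cdot)$ appear; it then invokes $\int_0^1 f_+(s,y)\,{\rm d}y=r^+(s)$ together with the quantile identity $\int_0^1 y\,f_+(s,y)\,{\rm d}y=\int_0^{r^+(s)}\bar F(s;y)\,{\rm d}y$. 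You instead substitute $y=\bar F(s,x)$ directly in the integrand. Your route is marginally more general, since it would work for any continuous edge-connection function $H$ rather than only an affine one; moreover, the substitution is really the pushforward/quantile transform, so for the integration variable it requires only continuity of $F(s;\pm,\cdot)$ (automatic, as these are distribution functions with densities), not the strict monotonicity you invoke. The one step where both arguments genuinely need $\bar F(s,F(s;y_i^*(s)))=y_i^*(s)$ is in the \emph{second} slot of $H$ when matching against the definition \eqref{eqn:empgra} of $g^{[F]}$; the paper passes over this silently, whereas you flag it and justify it via propagation of analyticity from the initial data through \eqref{eq:PDEsalt}. Making that point explicit is a genuine improvement on the paper's write-up.
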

\begin{proof}
	By definition 
	\begin{equation}\label{eq:den1}
		\alpha(s;y^*_i(s),\vec{v}) = \dfrac{\int_0^1 {\rm d}y \, f_+(s,y) H(s;y,y^*_i(s))}{\int_0^1 {\rm d}y \, [f_+(s,y) + f_-(s,y)] H(s;y,y^*_i(s))}.
	\end{equation}
	The proof now follows by (1) equating the numerator of both expression and the denominator of both expressions separately, (2) using the fact that $H(t;u,v)$ is linear in $u$, and (3) using the equivalent expressions for expectation of a random variable $\mathbb{E}(X)=\int x f(x) {\rm d}x = \int_0^1 \bar F(u) {\rm d}u$.
	More precisely, since the function $H(t;\cdot,\cdot)$ is linear in both arguments, we can write
	\[
	H(s;y,y^*_i(s)) = \dfrac{y}{2}(\pi_+-\pi_-) + \widetilde H(s;y^*_i(s)),
	\]
	where the function $\widetilde H(s;y^*_i(s))$ does not depend on $y$. Thus, we get
	\[
	\begin{array}{ll}
		\displaystyle\int_0^1 {\rm d}y f_+(s,y) H(s;y,y^*_i(s)) &= \dfrac{\pi_+-\pi_-}{2} \displaystyle\int_0^1 {\rm d}y \, y f_+(s,y) + \widetilde H(s;y^*_i(s)) r^+(s) \\
		&= \displaystyle \int_0^{r^+(s)} {\rm d}y \left[ \dfrac{\pi_+-\pi_-}{2} \bar F(s;y) + \widetilde H(s;y^*_i(s)) \right] \\
		&= \displaystyle {\int_0^{r^+(s)}} {\rm d}y \, g^{[F]}(s;y,F(s;y^*_i(s))),
	\end{array}
	\]
	where the first equality follows from $\int_0^1 {\rm d}y \, f_+(s,y)=r^+(s)$ and the second equality follows from \[\int_0^1 {\rm d}y \, y f_+(s,y) = \int_0^{r^+(s)} {\rm d}y \, \bar F(s;y).\] The claim follows by applying similar computations to the denominator in \eqref{eq:den1}.
\end{proof}

\begin{lemma}
	\label{lmm:induction}
	Suppose that \eqref{eq:assumption} holds with $b_1,b_2,c_1,c_2 <\infty$ fixed and independent of $\Delta$ and $n$. Let $a_1 = \max\{ b_1, c_1\}$ and $a_2 =\max\{ b_2, c_2\}$. For any $i =0, \dots, T/\Delta$,
	\begin{equation}
		\begin{aligned}\label{eqn:At}
			d_V(t_i) &\leq n a_2 \Delta^2 \sum_{k=1}^i (a_1 \Delta)^{k-1} {i \choose k}, \\
			d_E(t_i) &\leq n^2 a_2 \Delta^2 \sum_{k=1}^i (a_1 \Delta)^{k-1} {i \choose k}.
		\end{aligned}
	\end{equation}
\end{lemma}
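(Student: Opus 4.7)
\textbf{Proof plan for Lemma \ref{lmm:induction}.} The plan is a straightforward joint induction on $i$, exploiting the symmetry between the two bounds. Replacing $b_1,b_2,c_1,c_2$ by their majorants $a_1,a_2$ in \eqref{eq:assumption}, I would work with the simplified recursions
\begin{equation*}
d_E(t_i)-d_E(t_{i-1}) \leq a_1 \Delta n\, d_V(t_{i-1}) + a_2 n^2 \Delta^2, \qquad
d_V(t_i)-d_V(t_{i-1}) \leq \tfrac{a_1 \Delta}{n}\, d_E(t_{i-1}) + a_2 n \Delta^2.
\end{equation*}
The base case $i=0$ is immediate because $d_V(0)=d_E(0)=0$ (the two processes agree at time zero by the coupling of Section~\ref{sec:coupling}) and the sum in \eqref{eqn:At} is empty.

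For the inductive step, assume \eqref{eqn:At} holds for the index $i-1$. Substituting the hypothesis for $d_E(t_{i-1})$ into the recursion for $d_V(t_i)-d_V(t_{i-1})$, and then substituting the hypothesis for $d_V(t_{i-1})$ itself, I obtain
\begin{equation*}
d_V(t_i) \leq a_2 n \Delta^2 \left[1 + \sum_{k=1}^{i-1} (a_1\Delta)^{k-1} \binom{i-1}{k} + \sum_{k=1}^{i-1} (a_1\Delta)^{k} \binom{i-1}{k}\right].
\end{equation*}
Reindexing the last sum by $j = k+1$ and grouping the two sums by powers of $a_1\Delta$, the coefficient of $(a_1\Delta)^{j-1}$ becomes $\binom{i-1}{j} + \binom{i-1}{j-1}$, which by Pascal's identity equals $\binom{i}{j}$. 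The leading constant $1$ together with the $j=1$ contribution $\binom{i-1}{1}=i-1$ combine to $\binom{i}{1}=i$, and the $j=i$ term reduces to $(a_1\Delta)^{i-1}\binom{i-1}{i-1} = (a_1\Delta)^{i-1}\binom{i}{i}$. This produces exactly $a_2 n \Delta^2 \sum_{j=1}^i (a_1\Delta)^{j-1} \binom{i}{j}$, which is the claimed bound for $d_V(t_i)$. The argument for $d_E(t_i)$ is identical after rescaling by a factor of $n$, so a single joint induction delivers both bounds simultaneously.

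The only delicate point is the combinatorial bookkeeping in the inductive step: making sure the endpoint terms ($j=1$ and $j=i$) are absorbed correctly by Pascal's identity. I do not anticipate any genuine obstacle, since the two recursions are already stated in a form that matches the shape of \eqref{eqn:At} after one Pascal step.
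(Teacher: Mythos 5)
Your proposal is correct and follows essentially the same route as the paper: a joint induction on $i$ whose base case uses $d_V(0)=d_E(0)=0$ and whose inductive step substitutes the hypothesis into the recursions and regroups the two sums via Pascal's identity $\binom{i-1}{j}+\binom{i-1}{j-1}=\binom{i}{j}$, with the boundary terms $j=1$ and $j=i$ handled exactly as in the paper's computation. No substantive difference.
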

\begin{proof}
	We prove the statement by induction over $i$. Because $d_V(0)=d_E(t)=0$ the base case is immediate.
	We illustrate the induction step for $d_V(\cdot)$. If the above inequality holds, then we have 
	\begin{align*}
		d_V(t_{i+1})-d_V(t_i) &\leq \frac{\Delta}{n} a_1 \left[ n^2 a_2 \Delta^2 \sum_{k=1}^i (a_1 \Delta)^{k-1} {i \choose k} \right] + \Delta^2 a_2 n \\
		&= na_2 \Delta^2 \left[ \sum_{k=1}^i (a_1 \Delta)^k {i \choose k} +1 \right]
	\end{align*}
	This implies that 
	\begin{align*}
		d_V(t_{i+1}) &= d_V(t_i) + d_V(t_{i+1}-t_i) \\
		&\leq  \Delta^2 \sum_{k=1}^i (a_1 \Delta)^{k-1} {i \choose k} + na_2 \Delta^2 \left[ \sum_{k=1}^i (a_1 \Delta)^k {i \choose k} +1 \right] \\
		&= n a_2 \Delta^2 \left[ \sum^{i+1}_{k=1} (a_1 \Delta)^{k-1} {i+1 \choose k} \right].
	\end{align*}
	The induction step for $d_E(\cdot)$ is similar.
\end{proof}

%%%%%%%%%% REFERENCES %%%%%%%%%%%%%%%%%%%%%%%%%%%%

%%%%%%%%%%%%%%%%%%%%%%%%%%%%%%%%%%%%%%%%%%%%%%%

\end{document}